\def\singlespace{\def\baselinestretch{1}\@normalsize}
\numberwithin{equation}{section}
\renewcommand{\hat}{\widehat}
          \def\cL{{\cal  L}}     
          \def\cN{{\cal  N}}
\renewcommand{\hat}{\widehat}
\def \heps     {\hat{\heps}}
\DeclareMathOperator*{\argmin}{argmin}
\DeclareMathOperator{\sgn}{sgn}
\DeclareMathOperator{\Var}{Var}
\DeclareMathOperator{\var}{var}
\def \var   {\mbox{var}}
\def \sgn   {\mbox{sgn}}
\def \eff {{\rm{eff}}}
\def\EE{\mathbb{E}}
\def\PP{\mathbb{P}}
\def\today{\ifcase\month\or
  January\or February\or March\or April\or May\or June\or
  July\or August\or September\or October\or November\or December\fi
  \space\number\day, \number\year}
\def \newpage {\vfill\eject}
\newdimen\biblioindent\biblioindent=30pt
\newcommand{\beq}  {\begin{equation}}
\newcommand{\eeq}  {\end{equation}}
\newcommand{\beqn} {\begin{eqnarray}}
\newcommand{\eeqn} {\end{eqnarray}}
\newcommand{\beqnn}{\begin{eqnarray*}}
\newcommand{\eeqnn}{\end{eqnarray*}}
\renewcommand{\baselinestretch}{1.5}
\newtheorem{lem}{Lemma}
\newtheorem{thm}{Theorem}
\newtheorem{rem}{Remark}
\newtheorem{assumption}{Assumption}
\newcounter{CondCounter}
\def \RR	{\mathbb{R}}
\def \mse {\mathrm{MSE}}
\def \ql {\mathrm{QL}}
\newcommand{\rnum}[1]{\uppercase\expandafter{\romannumeral #1\relax}}
\begin{document}

	\title{\vspace*{-0.5 in} Volatility prediction comparison via robust volatility proxies: An empirical deviation perspective}
	\author{Weichen Wang\thanks{Address: Innovation and Information Management, HKU Business School, The Univsersity of Hong Kong, Hong Kong. E-mail: \textit{weichenw@hku.hk}.}$\;$, Ran An$^{\dagger}$, Ziwei Zhu\thanks{Address: Department of Statistics, University of Michigan, Ann Arbor, MI 48109, USA. E-mail: \textit{ziweiz@umich.edu}, \textit{angrace@umich.edu}. The research was partially supported by NSF grant DMS-2015366.} 
	\medskip\\
    \normalsize
    $^*$Innovation and Information Management, HKU Business School\\
    \normalsize
    $^{\dagger}$Department of Statistics, University of Michigan
    \date{}}

	\maketitle
	
	\begin{abstract}
	Volatility forecasting is crucial to risk management and portfolio construction. One particular challenge of assessing volatility forecasts is how to construct a robust proxy for the unknown true volatility. 
	In this work, we show that the empirical loss comparison between two volatility predictors hinges on the deviation of the volatility proxy from the true volatility. We then establish non-asymptotic deviation bounds for three robust volatility proxies, two of which are based on clipped data, and the third of which is based on exponentially weighted Huber loss minimization. In particular, in order for the Huber approach to adapt to non-stationary financial returns, we propose to solve a tuning-free weighted Huber loss minimization problem to jointly estimate the volatility and the optimal robustification parameter at each time point. We then inflate this robustification parameter and use it to update the volatility proxy to achieve optimal balance between the bias and variance of the global empirical loss. We also extend this Huber method to construct volatility predictors. Finally, we exploit the proposed robust volatility proxy to compare different volatility predictors on the Bitcoin market data. It turns out that when the sample size is limited, applying the robust volatility proxy gives more consistent and stable evaluation of volatility forecasts. 
	\end{abstract}
	
	\textbf{Keywords:} Volatility forecasting, Robust loss function, Huber minimization, Risk management, Crypto market.
	
\newpage	
\section{Introduction}
Volatility forecasting is a central task for financial practitioners, who care to understand the risk levels of their financial instruments or portfolios. There have been countless researches on improving the volatility modeling for financial time series, including the famous ARCH/GARCH model for better modeling volatility clustering, its many variants and more general stochastic volatility models \citep{engle1982autoregressive, bollerslev1986generalized, baillie1996fractionally, taylor1994modeling}, and on proposing better volatility predictors under different model settings and objectives \citep{poon2003forecasting, brailsford1996evaluation, andersen2005volatility, brooks2003volatility, christoffersen2000relevant}. This list of volatility forecasting literature is only illustrative and far from complete for the large body of researches on this topic. 

The prediction ideas range from the simplest Exponentially Weighted Moving Average (EWMA) \citep{taylor2004volatility}, which is adopted by J. P. Morgan's RiskMetrics, to more complicated time series models and volatility models including GARCH \citep{brandt2006volatility, park2002outlier}, to option-based or macro-based volatility forecasting \citep{lamoureux1993forecasting, vasilellis1996forecasting, christiansen2012comprehensive}, and to the more advanced machine learning techniques such as the nearest neighbor truncation \citep{andersen2012jump} and Recurrent Neural Netowrks (RNN) \citep{guo2016robust}. Correspondingly, the underlying model assumption ranges from only smoothness of nearby volatilities, to different versions of GARCH, to Black-Scholes model \citep{black2019pricing} and its complicated extensions. The data distribution assumption can also vary in whether data are normally distributed, or heavy-tailed distributed from a known distribution e.g. t-distribution, or generally non-normal. When data are generally non-normal, researchers have proposed to use the quasi maximum likelihood estimation (QMLE) \citep{bollerslev1992quasi, charles2019volatility, carnero2012estimating} and its robust standard error for inference, but the theoretical results are typically asymptotic. Albeit good theoretical guarantee, industry practitioners seldom apply QMLE and tend to employ the naive approach of truncating the returns by an ad-hoc level and then applying EWMA. 

In this work, we consider a model assumption requiring only smoothness of volatilities. For simplicity, we also assume the volatility time series are given a-priori, and after conditioning on the volatilities, return innovations are independent. 
We choose this simple setting for the following reasons. Firstly, our main focus of study is on building effective robust proxies rather than testing volatility models and constructing fancy volatility predictors. 
Secondly, although we ignore the weak dependency between return innovations (think of ARMA models \citep{brockwell2009time} for weakly dependency), the EWMA predictors and proxies can still have strong temporal dependency, due to data overlapping of a rolling window, so our analysis is still nontrivial. Also note that we allow the return time series to be non-stationary. Thirdly, the motivating example for us is volatility forecasting for the crypto market. \cite{charles2019volatility} applied several versions of GARCH models characterized by short memory, asymmetric effect, or long-run and short-run movements and concluded that they all seem not appropriate to model Bitcoin returns. Therefore, starting from conditionally independent data without imposing a too detailed model e.g. GARCH may be a good general starting point for the study of robust proxies. 

Besides the native EWMA predictor as our comparison benchmark, we consider a type of robust volatility predictor when the instrument returns present heavy tails in their distributions. Specifically, we only require the returns to bear finite fourth moment. We consider the weighted Huber loss minimization, which turns out to be a nontrivial extension from the equal-weighted Huber loss minimization. To achieve the desired rate of convergence, the optimal Huber truncation level for each sample should also depend on sample weight.
In addition, we apply a tuning-free approach following \cite{wang2020new} to tune the Huber truncation level adaptively and automatically. Unlike QMLE, our results focusing on a non-asymptotic empirical deviation bound. 
Therefore, although the main contribution of the paper is on robust proxy construction, we also claim a separate contribution on applying Huber minimization in the EWMA fashion.

Now, given two volatility predictors, the evaluation of their performance is often quite challenging due to two things: (1) selection of loss functions, and (2) selection of proxies, since obviously we cannot observe the truth volatilities. The selection of loss functions have been studied by \cite{patton2011volatility}. In Patton's insightful paper, he defined a class of robust losses with the ideal property that for any unbiased proxy, the ranking of two predictors using one of the robust losses will be always consistent in terms of the long-run expectation. The property is desired for it tells risk managers to select a robust loss, then not to worry much on designing proxies. As long as the proxy is unbiased, everything should just work out. 
Commonly used robust losses include the mean-squared error (MSE) and the quasi-likelihood loss (QL). However, there is one weakness of Patton's approach which has not been emphasized much in previous literature: the evaluation has to be in long-run expectation. The deviation of the empirical loss, which is what people actually use in practice, from the expected loss may still cause a large variance due to a bad choice of volatility proxy. 
Put it in the other way, his theory did not tell the risk managers how much an empirical loss can differ from its expected counterpart. 

In this work, we hope to bring the main message that besides the selection of a robust loss, the selection of a good proxy also matters for effective comparison of predictors, especially when the sample size $T$ is not large enough. 
For a single time point, we show that the probability of making false comparison could be very high. So the natural question is that by averaging the performance comparison over $T$ time points, are we able to get a faithful comparison of two predictors with high probability, so that the empirical loss ranking does reflect the population loss ranking? The answer is that we need robust proxies in order to have this kind of guarantee. 

We propose three robust proxies and compare them. The first choice uses the clipped squared return at the single current time $t$ as the proxy. This may be the simplest practical choice of a robust proxy. However, it cannot achieve the desired convergence in terms of empirical risk comparison due to large variance of only using a single time point. 
The second option mimics the EWMA proxy, so now we clip and average at multiple time points close to $t$. To find out the proper clipping, we first run EWMA tuning-free Huber loss minimization on local data for each time $t$. This will give a truncation level adaptive to the unknown volatility. Then the clipping bound will be rescaled to reflect the total sample size. According to literature on Huber minimization \cite{Cat12, FLW16, sun2020adaptive}, the truncation level needs to scale with the square root of the sample size to balance the bias and variance optimally. Therefore, it is natural to rescale the clipping bound by square root of the ratio of the total sample size $T$ over the local effective sample size. 
The third proxy exactly solves the EWMA Huber minimization, again with the rescaled truncation. Compared to the first and second proxies, this gives further improvement on the deviation bound of the proxy, depending on the central kurtosis rather than the absolute kurtosis. We will illustrate the above claims in more detail in later sections. 

The Huber loss minimization has been proposed by \cite{huber1964robust} under Huber's $\epsilon$-contamination model and its asymptotic properties have been studied in \cite{huber1973robust}. At that time, the truncation level was set as fixed according to the $95\%$ asymptotic efficiency rule and ``robustness'' means achieving minimax optimality under the $\epsilon$-contamination model \citep{chen2018robust}. But recently, Huber's M-estimator has been revisited in the regression setting under the assumption of general heavy-tailed distributions \citep{Cat12, FLW16}. Here ``robustness'' slightly changes its meaning to achieving sub-Gaussian non-asymptotic deviation bound under the heavy-tailed data assumption. In this setting, the truncation level grows with sample size and the resultant M-estimator is still asymptotically unbiased even when data distribution is asymmetric. Huber's estimator fits the goal of robust volatility prediction and robust proxy construction very well, as squared returns indeed have asymmetric distributions. Since \cite{Cat12}, new literature to reveal deeper understanding on Huber's M-estimator sprung up. For example, \cite{sun2020adaptive} proved the necessity of finite fourth moment for volatility estimation if we hope to achieve a sub-Gaussian type of deviation bound; \cite{wang2020new} proposed the tuning-free Huber procedure; \cite{chen2018robust, minsker2018sub} extended the Huber methodology to robust covariance matrix estimation. 

Robustness issue is indeed an important concern for real data volatility forecasting. It has been widely observed that financial returns have fat tails. When it comes to the crypto markets e.g. Bitcoin product (BTC), the issue gets more serious, as crypto traders frequently experience huge jumps in the BTC price. For example, BTC plummeted more than 20\% in a single day in March 2020. The lack of government regulation probably leaves the market far from efficient. 
This posts a stronger need for robust methodology to estimate and forecast volatility for crypto markets. Some recent works include \cite{catania2018predicting, trucios2019forecasting, charles2019volatility}. 

With the BTC returns, we will compare the non-robust EWMA predictor with the robust Huber predictor, with different decays, and evaluate their performance using the non-robust forward EWMA proxy and the robust forward Huber proxy. Both the predictors and proxies will be rolled forward and compared at the end of each day. We apply two robust losses, MSE and QL, to evaluate their performance. Interestingly, we will see that when sample size $T$ is large, our proposed robust proxy will be very close to forward EWMA proxy, and both will lead to sensible and similar comparison. However, when $T$ is small, non-robust proxy could lead to higher probability of making wrong conclusions, whereas the robust proxy, which automatically adapts to the total sample size and the time-varying volatilities, can still work as expected. This matches with our theoretical findings and provides new insights about applying robust proxies for practical risk evaluation. 

The rest of the paper is organized as follows. In Section \ref{sec2}, we first review the definition of robust loss by \cite{patton2011volatility} and explain our analyzing strategy for high probability bound of the empirical loss. We bridge the empirical loss and the unconditional expected loss, by the conditional expected loss conditioning on proxies.
In Section \ref{sec3}, we propose three robust proxies and prove that they can all achieve the correct ranking with high probability, if measured by  the conditional expected loss. 
However, the proxy based on Huber loss minimization will have the smallest probability of making false comparison, if measured by the empirical loss.
In Section \ref{sec4}, we will discuss robust predictors and see why the above claim is true and why comparing robust predictors with non-robust predictors can be a valid thing to do. Simulation studies as well as an interesting case study on BTC volatility forecasting are presented in Section \ref{sec5}. We finally conclude the paper with some discussions in Section \ref{sec6}. All the proofs are relegated to the appendix.

\section{Evaluation of volatility forecast}\label{sec2}
In this section, we first review the key conclusions of \cite{patton2011volatility} on robust loss functions for volatility forecast comparison. 
We then use examples to see why we also care about the randomness from proxy deviation beyond picking a robust loss.

\subsection{Robust loss functions}\label{sec2.1}
Suppose we have a time series of returns $(X_i)_{-\infty < i < \infty}$ of a financial instrument. Let $\mathcal F_{t-1}$ denote the $\sigma$-algebra generated from $(X_i)_{i \le t - 1}$. Consider a volatility predictor $h_t$, computed at time $t$ based on $\mathcal F_{t-1}$, that targets $\sigma ^ 2_t := \var(X_t)$. We use a loss function $L(\sigma ^ 2_t, h_t)$ to gauge the prediction error of $h_t$. In practice, we never observe $\sigma^2_t$; therefore, in order to evaluate the loss function $L(\sigma^2_t, h_t)$, we have to substitute $\sigma_t^2$ therein with a proxy $\hat\sigma_t^2$, which is computed based on $\mathcal{G}_t$, the $\sigma$-algebra generated from the future returns $(X_t,\dots,X_{\infty})$. 

Following \cite{patton2011volatility}, to achieve reliable evaluation of volatility forecasts, we wish to have the loss function $L$ satisfy the following three desirable properties: 


\begin{itemize}
\item[(a)] \emph{Mean-pursuit}: $h_t^* := \argmin_{h \in \mathcal H} \EE[L(\hat\sigma_t^2, h) | \mathcal F_{t-1}] = \EE[\hat\sigma_t^2 | \mathcal F_{t-1}]$. This says that the optimal predictor is exactly the conditional expectation of the proxy.

\item[(b)] \emph{Proxy-robust}: Given any two predictors $h_{1t}$ and $h_{2t}$ and any unbiased proxy $\hat\sigma_t^2$, i.e., $E[\hat\sigma_t^2 | \mathcal F_{t-1}] = \sigma_t^2$,  $\EE\{L(\sigma_t^2, h_{1t})\} \le \EE\{L(\sigma_t^2, h_{2t})\} \iff  \EE\{L(\hat\sigma_t^2, h_{1t})\} \allowbreak \le \EE\{L(\hat\sigma_t^2, h_{2t})\}$. This means that the forecast ranking is robust to the choice of the proxy.

\item[(c)] \emph{Homogeneous}: $L$ is a homogeneous loss function of order $k$, i.e., $L(a \sigma^2, a h) = a^k L(\sigma^2, h)$ for any $a > 0$. This ensures that the ranking of two predictors is invariant to the re-scaling of data.
\end{itemize}

Define the mean squared error (MSE) loss and quasi-likelihood (QL) loss as 
\beq
    \label{eq:mse_ql}
    \mse(\sigma^2, h)  = (\sigma^2 - h)^2 \text{~and~} \ql(\sigma^2, h)  = \frac{\sigma^2} {h} - \log\bigg( \frac{\sigma^2} {h} \bigg) - 1
\eeq
respectively. Here the QL loss can be viewed, up to an affine transformation, as the negative log-likelihood function of $X$ that follows $\cN(0, h)$ when we observe that $X ^ 2 = \sigma ^ 2$. Besides, QL is always positive and the Taylor expansion gives that $\ql(\sigma^2, h) \approx  (\sigma^2 / h -1)^2 / 2$ when $\sigma ^ 2 / h$ is around $1$. \cite{patton2011volatility} shows that among many commonly used loss functions, MSE and QL are the only two that satisfy all the three properties above. Specifically, Proposition 1 of \cite{patton2011volatility} says that given that $L$ satisfies property (a) and some regularity conditions, $L$ further satisfies property (b) if and only if $L$ takes the form: 
\beq
    \label{eq:loss_form}
    L(\sigma^2, h) = \tilde C(h) + B(\sigma^2) + C(h) (\sigma^2 - h),
\eeq
where $C(h)$ is the derivative function of $\tilde C(h)$ and is monotonically decreasing.  Proposition 2 in \cite{patton2011volatility} establishes that MSE is the only proxy-robust loss that depends on $\sigma^2 - h$ and that QL is the only proxy-robust loss that depends on $\sigma^2 / h$. Finally, Proposition 4 in \cite{patton2011volatility} gives the entire family of proxy-robust and homogeneous loss functions, which include QL and MSE (MSE and QL are homogeneous of order 2 and 0 respectively). Given such nice properties of MSE and QL, we mainly use MSE and QL to evaluate and compare volatility forecasts throughout this work. 

\subsection{The empirical deviation perspective}

Besides selecting a robust loss as \cite{patton2011volatility} suggested, one has to also nail down the proxy selection for prediction loss computation. \cite{patton2011volatility}'s framework did not separate the randomness from the predictors and the proxies, and the proxy-robust property (b) compares two predictors in long-term unconditional expectation, which averages both randomnesses. However, it is not clear from \cite{patton2011volatility} that for a given selected proxy, what is the probability that we end up a wrong comparison of two predictors. How does the random deviation of a proxy affect the comparison? Can some proxies outperform others in terms of less probability to make mistakes in finite sample? 

In practice, one has to use empirical risk to approximate the expected risk to evaluate volatility forecasts. This implies one important issue that property (b) neglects: Property (b) concerns only the expected risk and ignores the deviation of the empirical risk from its expectation. Such empirical deviation is further exacerbated by replacing the true volatility with its proxies, jeopardizing accurate evaluation of volatility forecasts. 
Our strategy of analysis is as follows: we first link the empirical risk to the conditional risk (conditioning on the selected proxy), claiming that they are close with high probability (see formal arguments in Section \ref{sec4}), and then study the relationship of comparing the unconditional risk and conditional risk.

Specifically, we are interested in comparing the accuracy of two series of volatility forecasts $\{h_{1t}\}_{t \in [T]}$ and $\{h_{2t}\}_{t \in [T]}$. For notational convenience, we drop the subscript ``$t \in [T]$'' when we refer to a time series unless specified otherwise. Define $\eta_t = \EE\{L(\sigma_t^2, h_{2t}) - L(\sigma_t^2, h_{1t})\} > 0$.  Without loss of generality, suppose that $\{h_{1t}\}$ outperforms $\{h_{2t}\}$ in terms of expected loss, i.e., 
\beq
    \label{eq:expected_gap}
    \frac{1}{T} \sum_{t=1}^T \bigg[\EE L(\sigma_t^2, h_{2t}) - \EE L(\sigma_t^2, h_{1t})\bigg] = \frac{1}{T} \sum_{t=1}^T \eta_t > 0.
\eeq
The empirical loss comparison can be decomposed into the conditional loss comparison and the difference between empirical loss and conditional loss.
\begin{equation*}
\begin{aligned}
\frac{1}{T} \sum_{t=1}^T  \bigg(L(\hat\sigma_t^2, h_{2t}) & - L(\hat\sigma_t^2, h_{1t})\bigg) = \frac{1}{T} \sum_{t=1}^T \bigg[\bigg(\EE\{L(\hat\sigma_t^2, h_{2t})|\mathcal G_t\} - \EE\{L(\hat\sigma_t^2, h_{1t})|\mathcal G_t\} \bigg) \\
& + \bigg(L(\hat\sigma_t^2, h_{2t}) - \EE\{L(\hat\sigma_t^2, h_{2t})|\mathcal G_t\} \bigg) - \bigg(L(\hat\sigma_t^2, h_{1t}) - \EE\{L(\hat\sigma_t^2, h_{1t})|\mathcal G_t\} \bigg)\bigg] \,.
\end{aligned}
\end{equation*}
Therefore, we study the following two probabilities for any $\varepsilon > 0$:
\beq
\text{\rnum{1}} := \PP  \bigg[\frac{1}{T} \sum_{t=1}^T \bigg( \EE\{L(\hat\sigma_t^2, h_{2t}) | \mathcal G_t\} - \EE\{L(\hat\sigma_t^2, h_{1t}) | \mathcal G_t\} \bigg) < \frac{1}{T} \sum_{t=1}^T \eta_t - \varepsilon\bigg]\,,
\eeq
\beq
    \label{eq:II}
    \text{\rnum{2}} := \PP\bigg[\bigg|\frac{1}{T} \sum_{t=1}^T \bigg( L(\hat\sigma_t^2, h_{t}) - \EE\{L(\hat\sigma_t^2, h_{t})|\mathcal G_t\} \bigg) \bigg| > \varepsilon \bigg] \,.
\eeq
We aim to select stable proxies to make \rnum{1} small, so that the probability of obtaining false rank of the empirical risk is small. Meanwhile, with a selected proxy, we hope \rnum{2} can be well controlled for the predictors we care to compare.
Note that only randomness from proxy matters in \rnum{1}. So we can focus on proxy design by studying this quantity. Then we would like to make sure the difference between the empirical risk and conditional risk are indeed small via studying \rnum{2}. The probability in \rnum{2} is with respect to both the proxy and the predictor. By following this analyzing strategy, we separate the randomness from predictor and proxy and eventually give results on empirical deviation rather than in expectation.

\subsection{False comparison due to proxy randomness}

To illustrate this issue, we first focus on a single time point $t$. We compare two volatility forecasts $h_{1t}$ and $h_{2t}$ satisfying that $\EE\{L(\sigma_t^2, h_{1t})\} < \EE\{L(\sigma_t^2, h_{2t})\}$. We are interested in the probability of having a reverse rank of forecast precision between $h_{1t}$ and $h_{2t}$, conditioning on the selected proxy, i.e, $\PP_{\mathcal G_t} \big[\EE\{L(\hat\sigma_t^2, h_{1t}) | \mathcal G_t\} > \EE\{L(\hat\sigma_t^2, h_{2t}) | \mathcal G_t\}\big]$.
Note that this probability is with respect to the randomness of the proxy $\hat\sigma_t^2$; in the sequel, we show that it may not be small for a general proxy. 
But if we can select a good proxy to control this probability well, we can ensure a correct comparison with high probability.


Now consider MSE and QL as the loss functions, so that we can derive explicitly the condition for the empirical risk comparison to be consistent with the expected risk comparison.
For simplicity, assume that $\mathcal F_{t-1}$ and $\mathcal G_t$ are independent. Recall that $\eta_t = \EE\{L(\sigma_t^2, h_{2t}) - L(\sigma_t^2, h_{1t})\} > 0$. We wish to calculate $\PP_{\mathcal G_t} [\EE\{L(\hat\sigma_t^2, h_{2t}) | \mathcal G_t\} - \EE\{L(\hat\sigma_t^2, h_{1t}) | \mathcal G_t\} < \eta_t - \varepsilon_t]$ for some $\varepsilon_t >  |\eta_t|$, i.e., the probability of having the forecast rank in conditional expectation be opposite of the rank in unconditional expectation. When $L$ is chosen to be MSE, we have
\beq
    \label{eq:eta_t_mse}
    \eta_t = \EE L(\sigma_t^2, h_{2t}) - \EE L(\sigma_t^2, h_{1t}) = \EE(h_{2t}^2 - h_{1t}^2) + 2\sigma_t^2 \EE(h_{1t} - h_{2t})
\eeq
and
\[
\EE\{L(\hat\sigma_t^2, h_{2t}) | \mathcal G_t\} - \EE\{L(\hat\sigma_t^2, h_{1t}) | \mathcal G_t\} = \EE(h_{2t}^2 - h_{1t}^2) + 2 \hat\sigma_t^2 \EE(h_{1t} - h_{2t}). 
\]
Therefore, 
\begin{equation*}
\begin{aligned}
\PP_{\mathcal G_t} & [\EE\{L(\hat\sigma_t^2, h_{2t}) | \mathcal G_t\} - \EE\{L(\hat\sigma_t^2, h_{1t}) | \mathcal G_t\} < \eta_t - \varepsilon_t] \\
& = \PP_{\mathcal G_t} \{2 (\hat\sigma_t^2 - \sigma_t^2) \EE(h_{1t} - h_{2t}) < -\varepsilon_t\} \\
& = \PP_{\mathcal G_t} [(\hat\sigma_t^2 / \sigma_t^2 - 1) \{\eta_t - \EE(h_{2t}^2 - h_{1t}^2)\} < -\varepsilon_t]. 
\end{aligned}
\end{equation*}
For illustration purposes, consider a deterministic scenario where $h_{1t} = \sigma_t^2$ is the oracle predictor, and where $h_{2t} = \sigma_t ^ 2 + \sqrt \eta_t$ (so that \eqref{eq:eta_t_mse} holds). 
Then
\begin{equation*}
\begin{aligned}
    \PP_{\mathcal G_t} & \bigl[\EE\{L(\hat\sigma_t^2, h_{2t}) | \mathcal G_t\} - \EE\{L(\hat\sigma_t^2, h_{1t}) | \mathcal G_t\} < \eta_t - \varepsilon_t\bigr] = \PP_{\mathcal G_t} \bigg( \frac{\hat\sigma_t^2}{\sigma_t^2}  - 1 > \frac{\varepsilon_t}{2\sigma_t^2 \sqrt{\eta_t}} \bigg). 
\end{aligned}
\end{equation*}
Similarly, if $h_{2t} = \sigma_t ^ 2 - \sqrt \eta_t$, we have 
\begin{equation*}
\begin{aligned}
\PP_{\mathcal G_t} & \bigl[\EE\{L(\hat\sigma_t^2, h_{2t}) | \mathcal G_t\} - \EE\{L(\hat\sigma_t^2, h_{1t}) | \mathcal G_t\} < \eta_t - \varepsilon_t\bigr] = \PP_{\mathcal G_t} \bigg( \frac{\hat\sigma_t^2}{\sigma_t^2} - 1 < - \frac{\varepsilon_t}{2\sigma_t^2 \sqrt{\eta_t}}\bigg).
\end{aligned}
\end{equation*}
We can see from the two equations above that a large deviation of $\hat\sigma_t ^ 2$ from $\sigma_t ^ 2$ gives rise to inconsistency between forecast comparisons based on empirical risk and expected risk. When we choose $L$ to be QL, we have that 
$$
    \eta_t = \EE L(\sigma_t^2, h_{2t}) - \EE L(\sigma_t^2, h_{1t}) = \EE(\log h_{2t} - \log h_{1t}) + \sigma_t^2 \EE\bigg(\frac{1}{h_{2t}} - \frac{1}{h_{1t}}\bigg)
$$
and that
\begin{equation*}
\begin{aligned}
    \PP_{\mathcal G_t} & \bigl[\EE\{L(\hat\sigma_t^2, h_{2t}) | \mathcal G_t\} - \EE\{L(\hat\sigma_t^2, h_{1t}) | \mathcal G_t\} < \eta_t - \varepsilon_t\bigr] \\
    & = \PP_{\mathcal G_t} \bigg\{(\hat\sigma_t^2 - \sigma_t^2) \EE\bigg(\frac{1}{h_{2t}} - \frac{1}{h_{1t}}\bigg) < -\varepsilon_t\bigg\} \\
    & = \PP_{\mathcal G_t} \big[(\hat\sigma_t^2 / \sigma_t^2 - 1)\{\eta_t - \EE(\log h_{2t} - \log h_{1t})\} < -\varepsilon_t\big].
\end{aligned}
\end{equation*}
Similarly, we consider a deterministic setup where $h_{1t} = \sigma_t^2$, and where $h_{2t} = m \sigma_t^2$ with a misspecified scale. To ensure that $\EE L(\sigma_t^2, h_{2t}) - \EE L(\sigma_t^2, h_{1t}) = \eta_t$, we have $\eta_t = 1/m - \log(1/m) - 1 \approx (1/m -1)^2/2$ when $1 / m \approx 1$. In this case, we deduce that
\begin{equation*}
\begin{aligned}
    \PP_{\mathcal G_t} & \{\EE(L(\hat\sigma_t^2, h_{2t}) | \mathcal G_t) - \EE(L(\hat\sigma_t^2, h_{1t}) | \mathcal G_t) < \eta_t - \varepsilon_t\} = \PP_{\mathcal G_t} \{ (\hat\sigma_t^2 / \sigma_t^2 - 1)(\eta_t - \log m) < -\varepsilon_t\}  \\
    & = \PP_{\mathcal G_t} \{(\hat\sigma_t^2 / \sigma_t^2 - 1)(1/m - 1) < -\varepsilon_t\} \approx \begin{cases}
      \PP_{\mathcal G_t} \bigl\{\hat\sigma_t^2 / \sigma_t^2 - 1 > \frac{\varepsilon_t}{\sqrt{2\eta_t}}\bigr\}  & \text{if $m > 1$,}\\
      \PP_{\mathcal G_t} \bigl\{\hat\sigma_t^2 / \sigma_t^2 - 1 < - \frac{\varepsilon_t}{\sqrt{2\eta_t}}\bigr\}  & \text{if $m \le 1$.}
    \end{cases}   
\end{aligned}
\end{equation*}
Similarly, we can see that the volatility forecast rank will be flipped once the deviation of $\hat\sigma_t ^2$ from $\sigma_t ^ 2$ is large.


Note again that in the derivation above, the probability of reversing the expected forecast rank is evaluated at a single time point $t$, which is far from enough to yield reliable comparison between volatility predictors. The common practice is to compute the empirical average loss of the predictors over time for their performance evaluation. Two natural questions arise: Does the empirical average completely resolve the instability of forecast evaluation due to the deviation of volatility proxies? If not, how should we robustify our volatility proxies to mitigate their empirical deviation?

\section{Robust volatility proxies} \label{sec3}

\subsection{Problem setup} \label{sec3.1}

Our goal in this section is to construct robust volatility proxies $\{\hat\sigma^2_t\}$ to ensure that $\{h_{1t}\}$ maintains empirical superiority with high probability, or more precisely, that $\PP_{\mathcal G_t}\big[\frac{1}{T} \sum_{t=1}^T \allowbreak\EE\{L(\hat\sigma_t^2, h_{2t})|\mathcal G_t\} - \EE\{L(\hat\sigma_t^2, h_{1t})|\mathcal G_t\} < 0\big]$ is small, given $\frac{1}{T} \sum_{t=1}^T \eta_t > 0$.
We first present our assumption on the data generation process. 
\begin{assumption}\label{assump1}
Given the true volatility series $\{\sigma_t\}$, instrument returns $\{X_t\}$ are independent with $\EE X_t = 0$ and $\mathrm{Var}(X_t) = \sigma_t^2$. The central and absolute fourth moments of $X_t$, denoted by $\kappa_t = \EE\{(X_t^2 - \sigma_t^2)^2\}$ and $\tilde\kappa_t = \EE X_t^4$, are both finite. 
\end{assumption}
Now we introduce some quantities that frequently appear in the sequel. At time $t$, define the smoothness parameters
\beq
    \begin{aligned}
        & \delta_{0,t} := \sum_{s=t}^{t+m} w_{s, t}\sigma_s^2 - \sigma_{t}^2,  ~~~\delta_{1,t} := \sum_{s=t}^{t+m} w_{s, t}^2 |\sigma_s^2 - \sigma_{t}^2|^2, \\
        & \Delta_{0,t} := \sum_{s=t - m}^{t-1} \nu_{s, t} \sigma_s^2 - \sigma_{t}^2 ~\text{ and }~ \Delta_{1,t} = \sum_{s=t-m}^{t-1} \nu_{s, t}^2 |\sigma_s^2 - \sigma_{t}^2|^2, 
    \end{aligned}
\eeq
where $w_{s, t} = \lambda ^ {s - t} / \sum_{j = t}^{t + m} \lambda ^ {j - t}$ is the forward exponential-decay weight at time $s$ from time $t$ with rate $\lambda$, and where $\nu_{s, t} = \lambda^{t-1-s} / \sum_{s=t-m}^{t-1} \lambda^{t-1-s}$ is the backward exponential-decay weight with rate $\lambda$. These smoothness parameters characterize how fast the distribution of volatility varies as time evolves, and our theory explicitly derives their impact. As we shall see, our robust volatility proxies yield desirable statistical performance as long as these smoothness parameters are small, meaning that the variation of the volatility distribution is slow. Besides, define the forward and backward effective sample sizes as 
\beq
    \label{eq:n_eff}
    n^\dagger_{\eff} := 1 / \sum_{s=t}^{t+m} w_{s, t}^2~~\text{and}~~n^\ddagger_{\eff} := 1 / \sum_{s=t - m}^{t - 1} \nu_{s, t}^2
\eeq
respectively, and define the forward and backward exponential-weighted moving average (EWMA) of the central fourth moment as 
\beq
    \label{eq:kappa}
    \kappa ^ {\dagger}_t = \sum_{s=t}^{t+m} w_{s, t}^2 \kappa_{s} /\allowbreak \sum_{s=t}^{t+m} w_{s, t}^2~~\text{and}~~\kappa ^ {\ddagger}_t = \sum_{s=t}^{t+m} \nu_{s, t}^2 \kappa_{s} /\allowbreak \sum_{s=t}^{t+m} \nu_{s, t}^2
\eeq
respectively. Similarly, we have $\tilde\kappa ^ {\dagger}_t$ and $\tilde \kappa ^ {\ddagger}_t$ as the forward and backward EWMA of the absolute fourth moment. 

Consider a mean-pursuit and proxy-robust loss function that takes the form \eqref{eq:loss_form}:
$$
    L(\sigma^2, h) = \tilde C(h) + B(\sigma^2) + C(h) (\sigma^2 - h) =: f(h) + B(\sigma^2) + C(h) \sigma^2, 
$$
where we write $f(h) = \int_{a}^{h} C(x)dx - C(h)h$ for any constant $a$. When $C(h)= -2h$ and $f(h) = h^2$ ($a=0$), $L$ is MSE. When $C(h) = 1/h$ and $f(h) = \log h$ ($a=e^{-1}$), $L$ becomes QL. Under Assumption \ref{assump1}, $h_{it}$ and $\hat\sigma_t^2$ are independent for $i = 1, 2$. Therefore, $\EE\{L(\hat\sigma_t^2, h_{it}) | \mathcal G_t\} = \EE[f(h_{it})] + B(\hat\sigma_t^2) + \EE[C(h_{it})] \hat\sigma_t^2$. Given \eqref{eq:expected_gap}, we wish to show that $\{h_{2t}\}$ outperforms $\{h_{1t}\}$ in conditional risk with high probability, i.e. \rnum{1} is small. Recall that  
\begin{equation*}
\begin{aligned}
    \text{\rnum{1}} = & \PP  \bigg[\frac{1}{T} \sum_{t=1}^T \EE\{L(\hat\sigma_t^2, h_{2t}) | \mathcal G_t\} - \EE\{L(\hat\sigma_t^2, h_{1t}) | \mathcal G_t\} < \frac{1}{T} \sum_{t=1}^T \eta_t - \varepsilon\bigg] \\
    = & \PP\bigg[\frac{1}{T} \sum_{t=1}^T \EE\{f(h_{2t}) - f(h_{1t})\} + \EE\{C(h_{2t}) - C(h_{1t})\} \hat\sigma_t^2 <  \frac{1}{T} \sum_{t=1}^T \eta_t - \varepsilon \bigg] \\
    = & \PP\bigg[\frac{1}{T} \sum_{t=1}^T (\hat\sigma_t^2/\sigma_t^2 - 1)(\eta_t - A_t) <  - \varepsilon \bigg], 
\end{aligned}
\end{equation*}
where $\varepsilon > 0$ is a deviation parameter that may exceed $T ^ {-1}\sum_{t \in [T]} \eta_t$, and the last equation is due to the fact that $\eta_t = \EE[f(h_{2t}) - f(h_{1t})] + \EE[C(h_{2t}) - C(h_{1t})] \sigma_t^2$. 

\subsection{Exponentially weighted Huber estimator}

We first review the tuning-free adaptive Huber estimator proposed in \cite{wang2020new}. Define the Huber loss function $\ell_{\tau}(x)$ with robustification parameter $\tau$ as
\[
\ell_\tau(x) := \begin{cases}
        \tau x - \frac{\tau ^ 2}{2}, \quad\text{if $x > \tau$;}
        \\
        \frac{x ^ 2}{2}, \quad\quad\quad\text{   if $|x| \le \tau$;}
        \\
        - \tau x - \frac{\tau ^ 2}{2}, \text{ if $x < -\tau$.}
        \end{cases}
\]
Suppose we have $n$ independent observations $\{Y_i\}_{i \in [n]}$ of $Y$ satisfying that $\EE Y = \mu$ and that $\var(Y) = \sigma^2$. The Huber mean estimator $\hat \mu$ is obtained by solving the following optimization problem:
\[
    \hat \mu := \argmin_{\theta \in \RR} \sum_{i = 1} ^ n \ell_{\tau}(Y_i - \theta). 
\]
\cite{fan2017estimation} show that when $\tau$ is of order $\sigma \sqrt n$, $\hat\mu$ achieves the optimal statistical rate with a sub-Gaussian deviation bound: 
\beq    
    \label{eq:subexp_bound}
    \PP(| \hat \mu - \mu| \lesssim \sigma \sqrt{z/n}) \ge 1-2e^{-z}, \forall z > 0. 
\eeq
In practice, $\sigma$ is unknown, and one therefore has to rely on cross validation (CV) to tune $\tau$, which incurs loss of sample efficiency.  \cite{wang2020new} propose a data-driven principle to estimate $\mu$ and the optimal $\tau$ jointly by iteratively solving the following two equations:
\beq
    \label{eq:tuning_free_huber}
    \left\{
    \begin{aligned}
        & \frac1n \sum_{i=1}^{n} \min(|Y_i - \theta|, \tau) \sgn(Y_i - \theta) = 0; \\
        & \frac1n \sum_{i=1}^{n} \frac{\min(|Y_i - \theta|^2, \tau^2)} {\tau^2} - \frac zn = 0, 
    \end{aligned}
    \right.
\eeq
where $z$ is the same deviation parameter as in \eqref{eq:subexp_bound}. 
Specifically, we start with $\hat\theta^{(0)} = \bar Y$ and solve the second equation for $\tau^{(1)}$. We then plug $\tau = \tau^{(1)}$ into the first equation to get $\hat\theta^{(1)}$. We repeat these two steps until the algorithm converges and use the final value of $\hat\theta$ as the estimator for $\mu$. \cite{wang2020new} proved that (i) if $\theta = \mu$ in the second equation above, then its solution gives $\tau = \sigma \sqrt{n/z}$ with probability approaching $1$; (ii) if we choose $\tau = \sigma \sqrt{n/z}$ in the first equation above, its solution satisfies \eqref{eq:subexp_bound}, even when $Y$ is asymmetrically distributed with heavy tails. Note that \cite{wang2020new} call the above procedure tuning-free, in the sense that the knowledge of $\sigma$ is not needed, but we still have the deviation parameter $z$ used to control the exception probability. The paper suggested to use $z = \log n$ in practice. 

In the context of volatility forecast, $\sigma_t$ always varies across time. The well known phenomenon of volatility clustering in the financial market implies that $\sigma_t$ typically changes slowly, so that we can borrow data around time $t$ to help with estimating $\sigma_t ^ 2$ with little bias. A common practice in quantitative finance is to exploit an exponential-weighted average of $\{X^2_s\}_{s = t} ^ {t + m}$ to estimate $\sigma_t ^ 2$, thereby discounting the importance of data that are distant from time $t$. To accommodate such exponential-decaying weights, we now propose a sample-weighted variant of the Huber estimator for volatility estimation as follows: 
\beq
    \label{eq:huber_weighted}
    \hat\mu := \argmin_{\theta \in \RR} \sum_{s = t} ^ {t + m} w_{s, t}\ell_{{\tau_t / w_{s, t}}}(X_s ^ 2 - \theta) =: \mathcal L_{\tau_t}(\theta; \{w_{s, t}\}_{s = t} ^ {t + m}), 
\eeq
where $\{w_{s,t}\}_{s \in \{t, t + 1, \ldots, t + m\}}$ are the sample weights. Note that the robustification parameters for the observations can be different: intuitively, the higher the sample weight is, the lower $\tau$ should be, so that we can better guard against heavy-tailed deviation of important data points. More technical justification on such choice of robustification parameters is given after Theorem \ref{thm1}. 
Correspondingly, to adaptively tune $\tau$, we iteratively solve the following two equations for $\tau_t$ and $\theta_t$ until convergence:
\beq
    \label{eq:tuning_free_huber_weighted}
    \left \{
    \begin{aligned}
        & \sum_{s=t}^{t+m} w_{s, t} \min\bigg(|X_s^2 - \theta_t|,  \frac{\tau_t}{w_{s, t}}\bigg) \sgn(X_s^2 - \sigma_t^2) = 0; \\
        & \sum_{s=t}^{t+m} w_{s, t}^2 \min\bigg(|X_s^2 - \theta_t|^2, \frac{\tau_t^2}{w_{s, t}^2}\bigg) / \tau_t^2 - z = 0.
    \end{aligned}
    \right.
\eeq

Our first theorem shows that the solution to the first equation of \eqref{eq:tuning_free_huber_weighted} yields a sub-Gaussian estimator of $\sigma_t ^ 2$, provided that $\tau_t$ is well tuned and that the distribution evolution of the volatility series is sufficiently slow.
\begin{thm} \label{thm1}
Under Assumption \ref{assump1}, if $\tau_t =\sqrt{\frac{\kappa ^ {\dagger}_t}{n^\dagger_{\eff} z}}$, we have for $n^\dagger_{\eff} \ge 16z$ that 
\beq
    \label{eq:sigma_hat_bound}
    \PP\bigg(|\hat\sigma_t^2 - \sigma_t^2| \le 4 \sqrt{\kappa ^ {\dagger}_t z/n^\dagger_{\eff}}\bigg) \ge 1-2e^{-z+ |\delta_{0,t}| \sqrt{{n^\dagger_{\eff} z}/{\kappa ^ {\dagger}_t}} + 2\delta_{1,t}{n^\dagger_{\eff} z}/{\kappa ^ {\dagger}_t} }, 
\eeq
where $\hat\sigma_t^2$ equals $\theta_t$ that solves the first equation of \eqref{eq:tuning_free_huber_weighted}.
\end{thm}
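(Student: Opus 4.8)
\noindent The plan is to recast the defining relation for $\hat\sigma_t^2$ as a monotone estimating equation, reduce the two-sided deviation event to two one-sided events on that equation, and bound each by a Chernoff argument driven by an elementary ``quadratic-in-log'' majorant of Huber's score. Write $\psi_\tau:=\ell_\tau'$, so that $\psi_\tau(x)=\sgn(x)\min(|x|,\tau)$, and note the rescaling identity $w\,\psi_{\tau/w}(x)=\psi_\tau(wx)$ for $w>0$. Then the first-order optimality condition for the weighted Huber objective \eqref{eq:huber_weighted} (equivalently, the first equation of \eqref{eq:tuning_free_huber_weighted}) reads $\widehat\Psi_t(\hat\sigma_t^2)=0$, where $\widehat\Psi_t(\theta):=\sum_{s=t}^{t+m}w_{s,t}\psi_{\tau_t/w_{s,t}}(X_s^2-\theta)=\sum_{s=t}^{t+m}\psi_{\tau_t}\big(w_{s,t}(X_s^2-\theta)\big)$ is continuous and non-increasing in $\theta$. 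Consequently, setting $a:=4\sqrt{\kappa^{\dagger}_t z/n^\dagger_{\eff}}$, the events $\{\hat\sigma_t^2>\sigma_t^2+a\}$ and $\{\hat\sigma_t^2<\sigma_t^2-a\}$ are contained in $\{\widehat\Psi_t(\sigma_t^2+a)\ge0\}$ and $\{\widehat\Psi_t(\sigma_t^2-a)\le0\}$ respectively, so it suffices to bound these two one-sided probabilities and take a union bound.

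For the upper tail I would invoke the elementary inequality $\psi_1(u)\le\log(1+u+u^2)$ valid for all $u\in\RR$ (checked by a short monotonicity argument on $[-1,1]$ and trivially for $|u|>1$), equivalently $\psi_\tau(x)\le\tau\log(1+x/\tau+x^2/\tau^2)$; the mirror bound $\psi_\tau(x)\ge-\tau\log(1-x/\tau+x^2/\tau^2)$ serves the lower tail. Applying Markov's inequality to $\exp\big(\widehat\Psi_t(\sigma_t^2+a)/\tau_t\big)$ with the \emph{exact} exponential tilt $v=1/\tau_t$, then using independence (Assumption~\ref{assump1}) and the inequality above with $x=w_{s,t}(X_s^2-\sigma_t^2-a)$, gives
$$
\PP\big(\widehat\Psi_t(\sigma_t^2+a)\ge0\big)\le\prod_{s=t}^{t+m}\EE\Big[1+\tfrac{w_{s,t}(X_s^2-\sigma_t^2-a)}{\tau_t}+\tfrac{w_{s,t}^2(X_s^2-\sigma_t^2-a)^2}{\tau_t^2}\Big].
$$
Computing the first two moments via $\EE X_s^2=\sigma_s^2$ and $\EE(X_s^2-\sigma_s^2)^2=\kappa_s$, using $1+y\le e^y$ and $\sum_s w_{s,t}=1$, the right-hand side is at most $\exp\!\big(\tau_t^{-1}(\delta_{0,t}-a)+\tau_t^{-2}\sum_s w_{s,t}^2\kappa_s+\tau_t^{-2}\sum_s w_{s,t}^2(\sigma_s^2-\sigma_t^2-a)^2\big)$.

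It then remains to substitute $\tau_t=\sqrt{\kappa^{\dagger}_t/(n^\dagger_{\eff}z)}$ and the identities $\sum_s w_{s,t}^2\kappa_s=\kappa^{\dagger}_t/n^\dagger_{\eff}$ and $\sum_s w_{s,t}^2=1/n^\dagger_{\eff}$, together with $\sum_s w_{s,t}^2(\sigma_s^2-\sigma_t^2-a)^2\le2\delta_{1,t}+2a^2/n^\dagger_{\eff}$. The resulting terms evaluate to $\tau_t^{-1}\delta_{0,t}=\delta_{0,t}\sqrt{n^\dagger_{\eff}z/\kappa^{\dagger}_t}$, $\ -a/\tau_t=-4z$, $\ \tau_t^{-2}\kappa^{\dagger}_t/n^\dagger_{\eff}=z$, $\ 2\delta_{1,t}/\tau_t^{2}=2\delta_{1,t}n^\dagger_{\eff}z/\kappa^{\dagger}_t$, and $2a^2/(n^\dagger_{\eff}\tau_t^2)=32z^2/n^\dagger_{\eff}\le2z$ by the hypothesis $n^\dagger_{\eff}\ge16z$; their sum bounds the exponent by $-z+|\delta_{0,t}|\sqrt{n^\dagger_{\eff}z/\kappa^{\dagger}_t}+2\delta_{1,t}n^\dagger_{\eff}z/\kappa^{\dagger}_t$. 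The lower tail is handled identically after replacing the bias contribution $\delta_{0,t}-a$ by $-\delta_{0,t}-a$, which is still at most $|\delta_{0,t}|/\tau_t-4z$. A union bound over the two tails and passing to the complement yield \eqref{eq:sigma_hat_bound}.

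The step I expect to matter most is the pairing of the exact tilt $v=1/\tau_t$ with the quadratic-in-log majorant: a cruder bound such as $e^u\le1+u+\tfrac12u^2e^{u_+}$ introduces a factor $e^{v\tau_t}$ whose constants do not close to produce the clean exponent $-z$, whereas the present choice collapses the per-sample exponential moment to a genuine quadratic in $X_s^2-\sigma_t^2-a$. The remainder is bookkeeping — confirming that the drift $-a$ dominates the $\delta_{0,t}$ bias, that the variance-type remainder assembles exactly into the stated $\delta_{1,t}$-coefficient, and that the radius constant $4$ is precisely what absorbs the $a^2/n^\dagger_{\eff}$ error once $n^\dagger_{\eff}\ge16z$.
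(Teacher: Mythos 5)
Your proposal is correct and follows essentially the same route as the paper's proof: a Catoni-style Chernoff bound with tilt $1/\tau_t$ applied to the monotone weighted Huber score, using the majorant $\psi_1(u)\le\log(1+u+u^2)$, independence, and the same moment bookkeeping that produces the exponent $-z+|\delta_{0,t}|\sqrt{n^\dagger_{\eff}z/\kappa^\dagger_t}+2\delta_{1,t}n^\dagger_{\eff}z/\kappa^\dagger_t$ under $n^\dagger_{\eff}\ge16z$. The only (cosmetic) difference is that you evaluate the score directly at $\sigma_t^2\pm a$ rather than bounding the roots of the deterministic envelopes $B_\pm(\theta)$ as in the paper, which if anything streamlines the final step.
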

\begin{rem}
    Given that $\{w_{s, t}\}_{s = t} ^ {t + m}$ are forward exponential-decay weights, we have that 
    \[
        n^\dagger_{\eff} = \frac{(1 + \lambda)(1 - \lambda ^ {m + 1})}{(1 - \lambda)(1 + \lambda ^ {m + 1})} = \frac{1 + \lambda}{1 + \lambda ^ {m + 1}}\sum_{i = 0} ^ m \lambda ^ i, 
    \]
    which converges to $(1 + \lambda) / (1 - \lambda)$ as $m \to \infty$, and which converges to $m + 1$ as $\lambda \to 1$. Therefore, $n^\dagger_{\eff} \to \infty$ requires both that $m \to \infty$ and that $\lambda \to 1$. As $n^\dagger_{\eff} \to \infty$, when $\delta_{0,t} = o(\sqrt{1/n^\dagger_{\eff}})$ and $\delta_{1,t} = o(1 / n^\dagger_{\eff})$, the exception probability is of order $e^{-z+o(z)}$, which converges to $0$ as $z \to \infty$. Therefore, if we choose $z = \log {n^\dagger_{\eff}}$, then $|\hat \sigma_t ^ 2 - \sigma_t ^ 2| = O_{\PP}\{(\log n^\dagger_{\eff} / n^\dagger_{\eff}) ^ {1 / 2}\}$ as $n^\dagger_{\eff} \to \infty$.
\end{rem}
\begin{rem}
    One crucial step of our proof is using Bernstein's inequality to control the derivative of the weighted Huber loss with respect to $\theta$, i.e., 
    \[
        \cL_\tau'(\theta; \{w_{s, t}\}) = \sum_{s=t}^{t+m} w_{s, t}\min(|X_s^2 - \theta|, \tau_t / w_{s, t}) \sgn(X_s^2 - \theta).
    \]
    Through setting $\tau_t / w_{s, t}$ as the robustification parameter for the data at time $s$, we can ensure that the corresponding summand in the derivative is bounded by $\tau_t$ in absolute value, which allows us to apply Bernstein's inequality. This justifies from the technical perspective our choices of the robustification parameters for different sample weights. 
\end{rem}
\begin{rem}
    Theorem \ref{thm1} is in fact not limited to exponential-decay weights $\{w_{s, t}\}_{s = t} ^ {t + m}$. Bound \eqref{eq:sigma_hat_bound} applies to any sample weight series. 
\end{rem}
 


Our next theorem provides theoretical justification of the second equation of \eqref{eq:tuning_free_huber_weighted}. It basically says that the solution to that equation gives an appropriate value of $\tau_t$. 

\begin{thm} \label{thm2}
On top of Assumption \ref{assump1}, we further assume that $\delta_{1,t} n^\dagger_{\eff} \le c_1 \kappa ^ {\dagger}_t$ and $w_{s, t} \asymp 1 / m$ for all $t \le s \le t + m$. If $\theta_t = \sigma_t^2$ in the second equation of \eqref{eq:tuning_free_huber_weighted}, we have that as $n^\dagger_{\eff}, z \to \infty$ with $z = o(n^\dagger_{\eff})$, its solution $\tau_t \asymp \sqrt{\frac{\kappa ^ {\dagger}_t}{n^\dagger_{\eff}z}}$ with probability approaching $1$. 
\end{thm}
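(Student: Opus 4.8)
The plan is to treat the second equation of \eqref{eq:tuning_free_huber_weighted} with $\theta_t$ replaced by $\sigma_t^2$ as a scalar fixed‑point problem and to sandwich its root. Fix $t$, write $Z_s := X_s^2 - \sigma_t^2$, and for $\tau>0$ set
\[
S(\tau) := \sum_{s=t}^{t+m} w_{s,t}^2 \min\!\big(Z_s^2,\, \tau^2/w_{s,t}^2\big), \qquad \phi(\tau) := S(\tau)/\tau^2,
\]
so the equation reads $\phi(\tau_t)=z$. First I would record the deterministic structure of $\phi$: it is continuous and non-increasing on $(0,\infty)$, with $\phi(0^+)=m+1$ (every term is clipped) and $\phi(\infty)=0$; since $n^\dagger_{\eff}\le m+1$ and $z=o(n^\dagger_{\eff})$, we have $z<m+1$ for large problems, so a root $\tau_t$ exists by the intermediate value theorem, and by monotonicity it suffices to produce absolute constants $0<c_*<C_*<\infty$ with $\phi(C_* r)<z$ and $\phi(c_* r)>z$, each with probability tending to $1$, where $r:=\sqrt{\kappa^\dagger_t/(n^\dagger_{\eff}z)}$ (note $zr^2=\kappa^\dagger_t/n^\dagger_{\eff}=\sum_s w_{s,t}^2\kappa_s$). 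On the intersection of those two events every root of $\phi$ lies in $[c_* r,C_* r]$, which is the assertion $\tau_t\asymp\sqrt{\kappa^\dagger_t/(n^\dagger_{\eff}z)}$.

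Both bounds I would obtain from Bernstein's inequality applied to $S(\cdot)$, a sum of independent summands each bounded by $\tau^2$. The moments I need are, for every $\tau$,
\[
\textstyle\sum_s w_{s,t}^2\kappa_s \;\le\; \EE S(\tau) \;\le\; \sum_s w_{s,t}^2\,\EE Z_s^2 \;=\; \kappa^\dagger_t/n^\dagger_{\eff}+\delta_{1,t} \;\le\; (1+c_1)\,\kappa^\dagger_t/n^\dagger_{\eff},
\]
using $\EE Z_s^2=\kappa_s+(\sigma_s^2-\sigma_t^2)^2$ and the hypothesis $\delta_{1,t}n^\dagger_{\eff}\le c_1\kappa^\dagger_t$; and, via $\min(a^4,B^4)\le B^2a^2$, $\Var(S(\tau))\le\sum_s\EE\big[w_{s,t}^4\min(Z_s^4,\tau^4/w_{s,t}^4)\big]\le\tau^2\sum_s w_{s,t}^2\EE Z_s^2\le(1+c_1)\tau^2\kappa^\dagger_t/n^\dagger_{\eff}$. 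For the upper bound take $C_*^2>1+c_1$ and $\tau=C_* r$: since $S(C_* r)<C_*^2 zr^2$ follows once $S(C_* r)-\EE S(C_* r)<(C_*^2-1-c_1)zr^2$, Bernstein with deviation of order $zr^2$, variance $\lesssim r^2\cdot zr^2$ and summand bound $\asymp r^2$ gives failure probability $\le\exp(-cz)\to0$. For the lower bound take $c_*$ with $c_*^2$ small and $\tau=c_* r$: here I first use the hypothesis $w_{s,t}\asymp 1/m\asymp 1/n^\dagger_{\eff}$ to get that the per-sample clip level $c_* r/w_{s,t}\asymp c_*\sqrt{n^\dagger_{\eff}\kappa^\dagger_t/z}\to\infty$ (because $z=o(n^\dagger_{\eff})$), so the truncation bias is asymptotically negligible and $\EE S(c_* r)\ge(1-o(1))\sum_s w_{s,t}^2\kappa_s=(1-o(1))zr^2$; then a lower-tail Bernstein bound of the same shape gives $\PP\big(S(c_* r)\le c_*^2 zr^2\big)\le\exp(-c'z)\to0$, i.e. $\phi(c_* r)>z$ with probability approaching $1$. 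Combining the two displays with the monotonicity of $\phi$ finishes the proof.

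The step I expect to be the main obstacle is the mean lower bound $\EE S(c_* r)\ge(1-o(1))zr^2$: Assumption \ref{assump1} supplies only a finite fourth moment of $X_s$ (equivalently, a finite second moment of $Z_s$), so controlling the truncation bias $\EE Z_s^2-\EE\min\!\big(Z_s^2,(c_* r)^2/w_{s,t}^2\big)$ \emph{uniformly} over the $m+1$ summands as $m\to\infty$ requires a uniform-integrability type input — e.g. a uniform bound on $\EE|Z_s|^{2+\delta}$ for some $\delta>0$, or boundedness of $\{\kappa_s\}$ together with $\{\sigma_s^2\}$ (hence $\kappa^\dagger_t\asymp 1$), which is the slow-variation regularity already implicit in the smoothness parameters $\delta_{0,t},\delta_{1,t}$. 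Once that is granted, everything else — continuity and monotonicity of $\phi$, the two Bernstein estimates, and tracking constants that depend only on $c_1$ — is routine.
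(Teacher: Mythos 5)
Your proposal is correct in substance but follows a genuinely different route from the paper's. The paper argues in two stages: it first pins down the rate of the solution of the \emph{population} version of the second equation --- the upper bound comes from $\sum_s w_{s,t}^2\EE|X_s^2-\sigma_t^2|^2\le(1+c_1)\kappa^\dagger_t/n^\dagger_{\eff}$ (exactly your display), and the lower bound from a quantile construction $q_s(a_s)$ solving $\PP(|X_s^2-\sigma_t^2|^2>qa)=a^{-1}$, whose purpose is to show that the per-sample clip level $\tau_t/w_{s,t}$ stays bounded below by a constant --- and then transfers to the empirical solution by adapting the proof of Theorem 1 of the tuning-free Huber paper of Wang et al., via the identities $q_n'(t)=-2t^{-1}p_n(t)$, $q'(t)=-2t^{-1}p(t)$ and a Bernstein bound on $\sum_s\zeta_s$ with $\sum_s\EE\zeta_s^2\le z$. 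You instead exploit the monotonicity and continuity of $\phi(\tau)=S(\tau)/\tau^2$ to sandwich \emph{every} empirical root directly between $c_*r$ and $C_*r$ using two Bernstein estimates at the endpoints; this is more self-contained (no population/empirical split, no imported derivative identities, and it handles possible non-uniqueness of the root for free), at the price of having to control $\EE S(\tau)$ at both endpoints yourself. Your individual steps check out: $n^\dagger_{\eff}\le m+1$ gives $z<m+1$ and hence existence of a root, $\min(a^4,B^4)\le B^2a^2$ gives the variance bound, and both Bernstein computations yield failure probability $e^{-cz}\to0$ since $z\to\infty$. The one delicate point you flag --- that the mean lower bound $\EE S(c_*r)\gtrsim zr^2$ needs clipping at level $c_*r/w_{s,t}\asymp c_*(n^\dagger_{\eff}\kappa^\dagger_t/z)^{1/2}$ to retain a constant fraction of $\kappa_s$ uniformly in $s$, which does not follow from finiteness of fourth moments alone --- is not a defect of your argument relative to the paper's: the paper's steps $q_s(a_s)\asymp z/m$ and $\sum_s w_{s,t}^2\EE\bigl[(|X_s^2-\sigma_t^2|^2)\wedge c_2^2\bigr]\asymp\kappa^\dagger_t/n^\dagger_{\eff}$ rest on exactly the same implicit uniform non-degeneracy of the distributions, so this is a shared hidden hypothesis that you have simply made explicit.
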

\begin{rem}
    Define the half-life parameter $l := \log(1 / 2) / \log(\lambda)$. If we fix $m / l = C$ for a universal constant $C$, which is common practice in volatility forecast, then we can ensure that $\{w_{s, t}\}_{t \le s \le t + m}$ are all of order $1 / m$. 
\end{rem}

\subsection{Average deviation of volatility proxies}\label{sec3.3}

We are now in position to evaluate \rnum{1}, which concerns average deviation of the volatility proxies over all the $T$ time points. To illustrate the advantage of the sample-weighted Huber volatility proxy as proposed in \eqref{eq:huber_weighted}, we first introduce and investigate two benchmark volatility proxies that are widely used in practice. Then we present our average deviation analysis of the sample-weighted Huber proxy.

The first benchmark volatility proxy, which we denote by $(\hat \sigma_c)_t ^ 2$, is simply a clipped squared return: 
\beq
    \label{eq:bench_proxy_1}
    \left\{
    \begin{aligned}    
        & {(\hat\sigma_c)^2_t} = \min(X_t ^ 2, c_t) = \min(X_t^2, \tau_t\sqrt{n^\dagger_{\eff}T}); \\
        & \sum_{s=t}^{t+m} w_{s, t}^2 \min\bigg(X_s^4, \frac{\tau_t^2}{w_{s, t}^2}\bigg) /\tau_t^2 - z = 0,  
    \end{aligned}
    \right.
\eeq
where $c_t$ is the clipping threshold, and where $z$ is a similar deviation parameter as in \eqref{eq:tuning_free_huber_weighted}. 
Here $\tau_t$ is tuned similarly as in \eqref{eq:tuning_free_huber_weighted}, except that now the second equation of \eqref{eq:bench_proxy_1} does not depend on $\hat \sigma ^ 2_c$ and thus can be solved independently. 
Following Theorem 2, we can deduce that $\tau_t \asymp \sqrt{\frac{{{\tilde\kappa_t} ^ \dagger}}{n^\dagger_{\eff}z}}$ and thus that $c_t \asymp \sqrt{\frac{{{\tilde\kappa ^ \dagger_t}} T}{z}}$. The main purpose of choosing such a rate of $c_t$ is to balance the bias and variance of the average of $(\hat\sigma_c)_t^ 2$ over $T$ time points. The following theorem develops a non-asymptotic bound for the average relative deviation of $\hat\sigma_c ^ 2$.

\begin{thm} \label{thm3}
    Under Assumption \ref{assump1}, if $c_t=\sqrt{{{\tilde\kappa^{\dagger}_t} T}/{z}}$ in \eqref{eq:bench_proxy_1}, for any bounded series $\{q_t\}_{t \in [T]}$ such that $\max_{t \in [T]}|q_t| \le Q$ and any $z > 0$, we have
    $$
        \PP\bigg(\frac1T \sum_{t=1}^T\{(\hat\sigma_c)_t^2 / \sigma_t^2 - 1\}q_t \le CQ \sqrt{z/T}\bigg) \ge 1-e^{-z}\,,
    $$
    where $C$ depends on $\max_{t \in [T]} \{(\tilde\kappa_t / \sigma_t^4) \vee ({\tilde{\kappa} ^ \dagger_t} / \sigma_t^4) \vee (\tilde\kappa_t /{{\tilde\kappa^{\dagger}_t}})\}$.
\end{thm}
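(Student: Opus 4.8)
Because the clipping threshold $c_t = \sqrt{\tilde\kappa^{\dagger}_t T/z}$ is a deterministic quantity, under Assumption~\ref{assump1} the proxy $(\hat\sigma_c)_t^2 = \min(X_t^2, c_t)$ is a function of $X_t$ alone, so the family $\{(\hat\sigma_c)_t^2\}_{t\in[T]}$ is independent; this is what makes a clean concentration argument possible, and only the upper tail is needed to match the one-sided claim. The plan is to split the target average into a stochastic part and a bias part,
\[
\frac1T\sum_{t=1}^T\{(\hat\sigma_c)_t^2/\sigma_t^2 - 1\}q_t
= \frac1T\sum_{t=1}^T \frac{q_t}{\sigma_t^2}\Big((\hat\sigma_c)_t^2 - \EE(\hat\sigma_c)_t^2\Big)
+ \frac1T\sum_{t=1}^T \frac{q_t}{\sigma_t^2}\Big(\EE(\hat\sigma_c)_t^2 - \sigma_t^2\Big),
\]
and to bound the two pieces separately.

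For the bias, truncation gives $\sigma_t^2 - \EE(\hat\sigma_c)_t^2 = \EE[(X_t^2 - c_t)_+] \in [0,\, \EE X_t^4/c_t] = [0,\,\tilde\kappa_t/c_t]$, using $(X_t^2-c_t)_+ \le X_t^4/c_t$. Hence the bias part is bounded in absolute value by $Q\max_t \tilde\kappa_t/(\sigma_t^2 c_t) = Q\sqrt{z/T}\,\max_t \tilde\kappa_t/(\sigma_t^2\sqrt{\tilde\kappa^{\dagger}_t})$, and the last maximum is finite since it equals $\big(\max_t (\tilde\kappa_t/\sigma_t^4)(\tilde\kappa_t/\tilde\kappa^{\dagger}_t)\big)^{1/2}$, whose two factors are among the quantities the constant $C$ is allowed to depend on. This already explains why $c_t$ must be chosen of order $\sqrt{T/z}$: the bias decays like $1/c_t$.

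For the stochastic part, set $a_t := (q_t/\sigma_t^2)\big((\hat\sigma_c)_t^2 - \EE(\hat\sigma_c)_t^2\big)$; these are independent, mean zero, satisfy $|a_t| \le Q c_t/\sigma_t^2$ since $0\le(\hat\sigma_c)_t^2\le c_t$, and $\Var(a_t) \le (Q^2/\sigma_t^4)\EE[\min(X_t^2,c_t)^2] \le Q^2 \tilde\kappa_t/\sigma_t^4$ using $\min(X_t^2,c_t)^2 \le X_t^4$. Thus $\sum_t \Var(a_t) \le Q^2 T \max_t(\tilde\kappa_t/\sigma_t^4)$ and $\max_t|a_t| \le Q\sqrt{T/z}\,\max_t(\sqrt{\tilde\kappa^{\dagger}_t}/\sigma_t^2)$. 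Bernstein's inequality then gives, with $u = CQ\sqrt{Tz}$,
\[
\PP\Big(\textstyle\sum_{t=1}^T a_t > CQ\sqrt{Tz}\Big)
\le \exp\Big(-\frac{C^2 z/2}{\max_t(\tilde\kappa_t/\sigma_t^4) + (C/3)\max_t(\sqrt{\tilde\kappa^{\dagger}_t}/\sigma_t^2)}\Big),
\]
and taking $C$ large enough as a function of $\max_t(\tilde\kappa_t/\sigma_t^4)$ and $\max_t(\tilde\kappa^{\dagger}_t/\sigma_t^4)$ only makes the exponent at least $z$, so this probability is at most $e^{-z}$.

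Combining the bias bound with this event and absorbing constants yields $\frac1T\sum_t\{(\hat\sigma_c)_t^2/\sigma_t^2 - 1\}q_t \le CQ\sqrt{z/T}$ with probability at least $1 - e^{-z}$, where $C$ depends only on $\max_t\{(\tilde\kappa_t/\sigma_t^4)\vee(\tilde\kappa^{\dagger}_t/\sigma_t^4)\vee(\tilde\kappa_t/\tilde\kappa^{\dagger}_t)\}$, as asserted. No single step is hard; the real content is the \emph{joint calibration} of $c_t$ — the bias pushes $c_t$ up while the linear (Bernstein) term pushes it down, and $c_t \asymp \sqrt{\tilde\kappa^{\dagger}_t T/z}$ is precisely the choice balancing the two at the rate $\sqrt{z/T}$. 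I would also note that when $c_t$ is instead the data-driven threshold defined through the second equation of \eqref{eq:bench_proxy_1}, Theorem~\ref{thm2} shows it concentrates around this value, so the same rate follows after a routine coupling; restricting to the deterministic $c_t$ keeps the statement clean.
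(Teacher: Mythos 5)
Your proposal is correct and follows essentially the same route as the paper: the same decomposition into a centered Bernstein sum (with $\Var \lesssim Q^2\tilde\kappa_t/\sigma_t^4$ and $|a_t|\lesssim Qc_t/\sigma_t^2$) plus a deterministic truncation bias of order $\tilde\kappa_t/c_t$, with the same calibration of $c_t$. Your bias bound via $(X_t^2-c_t)_+\le X_t^4/c_t$ is in fact a cleaner rendering of the paper's corresponding display, and your closing remark about coupling the data-driven $c_t$ to its deterministic value via Theorem 2 flags a point the paper's proof silently elides.
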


The second benchmark volatility proxy, which we denote by $(\hat\sigma_e)^2_t$, is defined as
\beq
    \label{eq:bench_proxy_2}
    \left \{
    \begin{aligned}
        & (\hat\sigma_e)_t^2 = \sum_{s=t}^{t+m} w_{s,t} \min\bigg(X_s^2, \frac{c_t}{w_{s,t}}\bigg) = \sum_{s=t}^{t+m} \min(w_{s,t} X_s^2, \tau_t \sqrt{T/n^\dagger_{\eff}}), \\
        & \sum_{s=t}^{t+m} w_{s,t}^2 \min\bigg(X_s^4, \frac{\tau_t^2}{w_{s,t}^2}\bigg) /\tau_t^2 - z = 0. 
    \end{aligned}
    \right.
\eeq
The second equation of \eqref{eq:bench_proxy_2} is the same as that of \eqref{eq:bench_proxy_1}. The only difference between $\hat\sigma ^ 2_e$ and $\hat\sigma ^ 2_c$ is that $\hat\sigma_e ^ 2$ exploits not only a single time point, but multiple data points in the near future to construct the volatility proxy. Accordingly, the clipping threshold is updated as $\tau_t\sqrt{T / n_\eff}$.
The following theorem characterizes the average relative deviation of $\hat\sigma ^ 2_e$. 

\begin{thm}\label{thm4}
Under Assumption \ref{assump1}, for any $(z, c) > 0$ satisfying that 
\begin{enumerate}
    \item $\max_{t \in [T]} \bigg\{ 2|\delta_{0,t}|/\sqrt{{\tilde\kappa^{\dagger}_t}} + 4\delta_{1,t} n^\dagger_{\eff}/({\tilde\kappa^{\dagger}_t}\sqrt{T})\bigg\} \le c$; 
    \item $n^\dagger_{\eff} \ge 2c^{-1}\{1 + (\log 2T)/z\} \sqrt{T}$; 
    \item $\sqrt{T} \ge 16 c z$, $T \ge 16z$;
\end{enumerate}
and any bounded series $\{q_t\}_{t \in [T]}$ such that $\max_{t \in [T]}|q_t| \le Q$, let $c_t =\sqrt{{{\tilde\kappa^{\dagger}_t}T}/{(n^{\dagger 2}_{\eff} z)}}$, we have
$$
    \PP\bigg[\frac1T \sum_{t=1}^T\{(\hat\sigma_e)_t^2 / \sigma_t^2 - 1\}q_t \le CQ \sqrt{\frac{z}{T}} + \frac1T \sum_{t=1}^T \frac{\delta_{0,t}q_t}{\sigma_t^2}\bigg] \ge 1- 2 e^{-z}\,,
$$
where $C$ depends on $\max_{t \in [T],u \in [m]} \{\tilde\kappa_{t+u} / \sigma_t^4\}$.
\end{thm}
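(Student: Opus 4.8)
The plan is to split the relative deviation into a deterministic bias plus a mean‑zero stochastic remainder, to turn the remainder into a sum of \emph{independent} terms by reindexing it through the \emph{future} time $s$ rather than the reference time $t$, and then to apply Bernstein's inequality; conditions 1--3 are there exactly so that the resulting bias/envelope/variance trade‑off closes at the rate $\sqrt{z/T}$. Write $\phi_{s,t}(x):=\min(w_{s,t}x^2,c_t)$, so that $(\hat\sigma_e)_t^2=\sum_{s=t}^{t+m}\phi_{s,t}(X_s)$ as in \eqref{eq:bench_proxy_2}, and decompose
\[
\frac1T\sum_{t=1}^T\Big\{\tfrac{(\hat\sigma_e)_t^2}{\sigma_t^2}-1\Big\}q_t
=\underbrace{\tfrac1T\sum_{t=1}^T\tfrac{q_t}{\sigma_t^2}\big((\hat\sigma_e)_t^2-\EE(\hat\sigma_e)_t^2\big)}_{=:\,\mathrm S}
\;+\;\underbrace{\tfrac1T\sum_{t=1}^T\tfrac{q_t}{\sigma_t^2}\big(\EE(\hat\sigma_e)_t^2-\sigma_t^2\big)}_{=:\,\mathrm B}.
\]

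For the bias term $\mathrm B$, since $\EE\phi_{s,t}(X_s)=w_{s,t}\sigma_s^2-\EE(w_{s,t}X_s^2-c_t)_+$ and $\sum_{s=t}^{t+m}w_{s,t}\sigma_s^2=\sigma_t^2+\delta_{0,t}$, we get $\EE(\hat\sigma_e)_t^2-\sigma_t^2=\delta_{0,t}-r_t$ with $0\le r_t:=\sum_{s}\EE(w_{s,t}X_s^2-c_t)_+\le c_t^{-1}\sum_{s}w_{s,t}^2\,\tilde\kappa_s=\tilde\kappa^\dagger_t/(n^\dagger_\eff c_t)$, using $(a-b)_+\le a^2/b$ together with the definition of $\tilde\kappa^\dagger_t$. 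The prescribed $c_t=\sqrt{\tilde\kappa^\dagger_tT/(n^{\dagger 2}_\eff z)}$ makes $r_t\le\sqrt{\tilde\kappa^\dagger_t z/T}$, and since $r_t\ge0$,
\[
\mathrm B\ \le\ \frac1T\sum_{t=1}^T\frac{\delta_{0,t}q_t}{\sigma_t^2}\;+\;Q\sqrt{z/T}\,\max_{t\in[T]}\frac{\sqrt{\tilde\kappa^\dagger_t}}{\sigma_t^2},
\]
which is of the advertised form once one notes $\tilde\kappa^\dagger_t/\sigma_t^4\le\max_{t\in[T],u\in[m]}(\tilde\kappa_{t+u}/\sigma_t^4)$ because $\tilde\kappa^\dagger_t$ is a weighted average of $\tilde\kappa_t,\dots,\tilde\kappa_{t+m}$. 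A larger $c_t$ would shrink $r_t$ but inflate the Bernstein envelope below, and this value balances the two.

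For the stochastic term $\mathrm S$, write $(\hat\sigma_e)_t^2-\EE(\hat\sigma_e)_t^2=\sum_{s=t}^{t+m}\{\phi_{s,t}(X_s)-\EE\phi_{s,t}(X_s)\}$ and exchange the order of summation to get $\mathrm S=\frac1T\sum_{s}(\xi_s-\EE\xi_s)$, where $\xi_s:=\sum_{t=\max(1,s-m)}^{\min(T,s)}(q_t/\sigma_t^2)\,\phi_{s,t}(X_s)$ is a function of $X_s$ alone; by Assumption~\ref{assump1} the $\xi_s-\EE\xi_s$ are then independent and centered, which dissolves the dependence coming from overlapping rolling windows. Two facts feed Bernstein's inequality: first, $0\le\phi_{s,t}(X_s)\le w_{s,t}X_s^2$ and $\sum_{t=\max(1,s-m)}^{\min(T,s)}w_{s,t}\le1$ give $|\xi_s|\le X_s^2\max_t(|q_t|/\sigma_t^2)$, hence $\Var(\xi_s)\le Q^2\,\tilde\kappa_s\max_t(1/\sigma_t^4)$ and $\sum_s\Var(\xi_s)\le CQ^2T\max_{t\in[T],u\in[m]}(\tilde\kappa_{t+u}/\sigma_t^4)$; second, $\phi_{s,t}(X_s)\le c_t$ gives the a.s.\ envelope $|\xi_s-\EE\xi_s|\le b:=Q\max_t(1/\sigma_t^2)\max_s\sum_{t=\max(1,s-m)}^{\min(T,s)}c_t$. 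Then for any $v>0$,
\[
\PP(\mathrm S>v)\ \le\ \exp\!\Big(-\frac{T^2v^2/2}{\sum_s\Var(\xi_s)+bTv/3}\Big),
\]
and with $v=CQ\sqrt{z/T}$, $C$ depending on $\max_{t\in[T],u\in[m]}(\tilde\kappa_{t+u}/\sigma_t^4)$, the first term in the denominator is dominated; the second is dominated once $b\lesssim Q\sqrt{T/z}$. Since $b\le Q\max_t(1/\sigma_t^2)(m+1)\max_tc_t\asymp Q(m+1)\sqrt{\max_t\tilde\kappa^\dagger_t}\,\sqrt T/\big(n^\dagger_\eff\sqrt z\,\min_t\sigma_t^2\big)$, this is exactly what conditions~2 and~3 buy us: they bound $c$ from above and $n^\dagger_\eff$ from below by multiples of $\sqrt T$, keeping $(m+1)/n^\dagger_\eff$ bounded in the exponential‑weight regime. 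Hence $\PP(\mathrm S>CQ\sqrt{z/T})\le e^{-z}$.

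Adding the bound for $\mathrm B$ to this gives the stated inequality on one exceptional event; the remaining $e^{-z}$ in $1-2e^{-z}$ absorbs a second one — the event on which the data‑driven robustification parameter $\tau_t$ (hence the effective clipping $c_t=\tau_t\sqrt{T/n^\dagger_\eff}$) falls outside the range prescribed by Theorem~\ref{thm2} for the fourth‑moment version — which is precisely where condition~1 (matching, up to the extra $\sqrt T$, the smoothness hypothesis of Theorem~\ref{thm2}) and the $\log 2T$ term in condition~2 (a union bound over the $T$ time points) are used. The main obstacle is not the reindexing (that is the key simplification) but the Bernstein step: one must thread $c_t$, $n^\dagger_\eff$, the forward EWMA absolute fourth moment $\tilde\kappa^\dagger_t$, the $\tilde\kappa_t$'s and $T$ through both the envelope $b$ and the variance proxy so that the exponent stays $\gtrsim z$ at level $\asymp Q\sqrt{z/T}$ — equivalently, verifying that conditions~1--3 are exactly strong enough for this — which is the laborious, bookkeeping‑heavy part of the argument.
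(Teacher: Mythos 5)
Your decomposition into bias plus a centered stochastic term is the same as the paper's, and your bias computation (truncation bias $r_t\le \tilde\kappa^\dagger_t/(n^\dagger_{\eff}c_t)=\sqrt{\tilde\kappa^\dagger_t z/T}$ plus the smoothness term $\delta_{0,t}$) matches the paper's. Where you genuinely diverge is the concentration step. The paper keeps the sum indexed by $t$, so the summands $Y_t=\tfrac{q_t}{\sigma_t^2}\{(\hat\sigma_e)_t^2-\EE(\hat\sigma_e)_t^2\}$ are dependent through the overlapping windows; it then (i) proves a per-$t$ Theorem-1-style concentration to get $|Y_t|\le CQ\sqrt z$ on a high-probability event (this is where condition 1, the $\log 2T$ in condition 2, and condition 3 are consumed, via a union bound over $t$ costing one of the two $e^{-z}$'s), and (ii) invokes the dependent-data Bernstein inequality of Lemma~\ref{lem1} with functional-dependence coefficients $\gamma_j$. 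Your reindexing $\mathrm S=\tfrac1T\sum_s(\xi_s-\EE\xi_s)$ with $\xi_s$ a function of $X_s$ alone exploits the fact that $(\hat\sigma_e)_t^2$ is a \emph{linear} combination of termwise-clipped quantities, so the dependence dissolves and classical Bernstein suffices. This is a real simplification for this theorem (and it explains why the paper's heavier machinery is unavoidable for Theorem~\ref{thm5}, where the Huber proxy is an implicit function of the whole window); it also shows that, on your route, condition 1 and the $\log 2T$ term are not actually needed.

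There is, however, one concrete gap. Your Bernstein envelope is the deterministic bound $b\le Q\max_t\sigma_t^{-2}\,(m+1)\max_t c_t \asymp Q(m+1)\sqrt{\max_t\tilde\kappa^\dagger_t}\,\sqrt T/(n^\dagger_{\eff}\sqrt z\,\min_t\sigma_t^2)$, and you need $b\lesssim Q\sqrt{T/z}$, i.e.\ $(m+1)/n^\dagger_{\eff}=O(1)$. Conditions 1--3 do not give this: $n^\dagger_{\eff}$ can stay bounded while $m\to\infty$ (take $\lambda$ fixed $<1$), in which case your envelope blows up and the sub-exponential part of the Bernstein bound dominates at the level $CQ\sqrt{zT}$. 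You flag this as "the exponential-weight regime," but $w_{s,t}\asymp 1/m$ is a hypothesis of Theorem~\ref{thm2}, not of Theorem~\ref{thm4}, so it must either be added or replaced by the paper's device of conditioning on the high-probability event $\max_t|(\hat\sigma_e)_t^2-\sigma_t^2|\le C\sqrt{\tilde\kappa^\dagger_t z}$ to obtain a random envelope of order $Q\sqrt z$. Relatedly, your account of the second exceptional event is off: in the theorem as stated $c_t=\sqrt{\tilde\kappa^\dagger_t T/(n^{\dagger 2}_{\eff}z)}$ is deterministic, so there is no "data-driven $\tau_t$ out of range" event to absorb; the paper's second $e^{-z}$ pays for the union bound over $t$ of the envelope event just described, which is exactly the step your deterministic-envelope shortcut was meant to avoid and is the step you should reinstate to close the argument without extra assumptions.
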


\begin{rem}
Let $z = \log T$. 
To achieve the optimal rate of $\sqrt{\log T/T}$, Theorem \ref{thm4} requires that $\frac1T \sum_{t=1}^T \frac{\delta_{0,t}q_t}{\sigma_t^2}$ is of order $\sqrt{\log T/T}$. We also require $n^\dagger_{\eff}$ to be at least the order of $\sqrt{T}$.
\end{rem}

\begin{rem}
    One technical challenge of proving Theorem \ref{thm4} lies in the overlap of squared returns that are used to construct neighboring $\hat\sigma_t^2$, which leads to temporal dependence across $\{(\hat \sigma_e)^ 2_t\}_{t \in [T]}$. To resolve this issue, we apply a more sophisticated variant of Bernstein's inequality for time series data \citep{zhang2021robust}. See Lemma \ref{lem1} in the appendix.
\end{rem}


We now move on to the Huber volatility proxy. At time $t$, denote the solution to \eqref{eq:tuning_free_huber_weighted} by $(\hat\theta_t, \hat\tau_t)$. Note that $\hat\tau_t$ is tuned based on just $m$ data points. Given that now our focus is on the average deviation of volatility proxies over $T$ data points, we need to raise our robustification parameters to reduce the bias of our Huber proxies and rebalance the bias and variance of the average deviation. After all, averaging over a large $T$ mitigates the impact of possible tail events, so that we can relax the thresholding effect of the Huber loss. Specifically, let $c_t = \hat\tau_t \sqrt{T/n^\dagger_{\eff}}$, which is of order $\sqrt{{{\kappa^{\dagger}_t}T} / {(n^{\dagger 2}_{\eff} z)}}$ according to Theorem 2. Then we substitute $\tau_t = c_t$ into the first equation of \eqref{eq:tuning_free_huber_weighted} and solve for $\sigma_t^2$ therein to obtain the adjusted proxy; that is to say, the final $(\hat\sigma_H)_t ^ 2$ satisfies the following: 
\beq
    \label{eq:huber_proxy}
    \sum_{s=t}^{t+m} w_{s,t} \min\bigg(|X_{s, t}^2 - (\hat\sigma_H)_t^2|, \frac{c_t}{w_{s,t}}\bigg) \sgn(X_{s, t}^2 - \sigma_t^2) = 0. 
\eeq
The inflation factor $\sqrt{T / n^\dagger_{\eff}}$ in $c_t$ implies that the larger sample size we have, the closer the corresponding Huber loss is to the square loss. This justifies the usage of vanilla EWMA proxy as the most common practice in financial industry when the total evaluation period is long. However, when $T$ is not sufficiently large, the Huber proxy yields more robust estimation of the true volatility.
The following theorem characterizes the average relative deviation of the Huber proxies.
\begin{thm}\label{thm5}
Under Assumption \ref{assump1}, for any $(c, z) >0$ satisfying that 
\begin{enumerate}
    \item $\max_{t \in [T]} \bigg\{ 2|\delta_{0,t}|/\sqrt{{\kappa^{\dagger}_t}} + 4\delta_{1,t} n^\dagger_{\eff}/({\kappa^{\dagger}_t}\sqrt{T})\bigg\} \le c$; 
    \item $n^\dagger_{\eff} \ge 2c^{-1}\{1 + (\log 2T)/z\} \sqrt{T}$; 
    \item $\sqrt{T} \ge 16 c z$, $T \ge 16z$;
    \item For any $t \in [T]$, $\cL_{c_t}(\theta; \{w_{s, t}\}_{s = t} ^ {t + m})$ is $\alpha$-strongly convex for $\theta \in \bigg(\sigma_t ^2 - (2c + 2)\sqrt{\kappa_t^{\dagger} z}, \sigma_t ^2 + (2c + 2)\sqrt{\kappa_t^{\dagger} z}\bigg)$;
\end{enumerate}
and any bounded series $\{q_t\}_{t \in [T]}$ such that $\max_{t \in [T]}|q_t| \le Q$, let $c_t =\sqrt{{{\kappa^{\dagger}_t}T}/(n^{\dagger 2}_{\eff} z)}$, we have
$$
\PP\bigg[\frac1T \sum_{t=1}^T\{(\hat\sigma_H)_t^2 / \sigma_t^2 - 1\}q_t \le C Q\sqrt{\frac zT} + \frac1T \sum_{t=1}^T \frac{(\delta_{0,t} + 4\delta_{1,t})q_t}{\sigma_t^2}\bigg] \ge 1- 2 e^{-z}\,,
$$
where $C$ depends on $\max_{t \in [T],u \in [m]} \{\kappa_{t+u} / \sigma_t^4\}$ and $\alpha$. 
\end{thm}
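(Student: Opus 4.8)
The plan is to mirror the proof of Theorem~\ref{thm4}, the one essential new difficulty being that $(\hat\sigma_H)_t^2$ is defined only implicitly. Write $\hat\delta_t:=(\hat\sigma_H)_t^2-\sigma_t^2$ and introduce the weighted clipped score
$$
    g_t(\theta):=\sum_{s=t}^{t+m}w_{s,t}\min\!\bigl(|X_s^2-\theta|,\,c_t/w_{s,t}\bigr)\sgn(X_s^2-\sigma_t^2),
$$
so that $g_t\bigl((\hat\sigma_H)_t^2\bigr)=0$ by \eqref{eq:huber_proxy}. First I would establish a \emph{coarse uniform localization}: with probability at least $1-e^{-z}$, for every $t\in[T]$ the proxy $(\hat\sigma_H)_t^2$ lies in the neighborhood $N_t:=\bigl(\sigma_t^2-(2c+2)\sqrt{\kappa^{\dagger}_t z},\,\sigma_t^2+(2c+2)\sqrt{\kappa^{\dagger}_t z}\bigr)$ on which $\cL_{c_t}(\cdot;\{w_{s,t}\})$ is $\alpha$-strongly convex by condition~4. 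This is a Theorem~\ref{thm1}-type argument adapted to the inflated threshold $c_t$: evaluate $g_t$ at the endpoints of $N_t$, bound its mean through the clipping bias and the smoothness parameters $\delta_{0,t},\delta_{1,t}$, and control its fluctuation by Bernstein's inequality, each summand of $g_t$ being bounded by $c_t$ in absolute value; a union bound over $t\in[T]$ costs only a $\log T$ factor, which conditions~1--3 and the choice $c_t=\sqrt{\kappa^{\dagger}_t T/(n^{\dagger 2}_{\eff}z)}$ render harmless because we need only coarse containment.

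On the localization event I would then \emph{reduce to the score at the truth}. The map $g_t$ is piecewise linear in $\theta$ with breakpoints at $X_s^2$ and $X_s^2\pm c_t/w_{s,t}$; since $c_t/w_{s,t}\asymp\sqrt{\kappa^{\dagger}_t T/z}$ greatly exceeds the $O(\sqrt{\kappa^{\dagger}_t z})$ width of $N_t$, only an $o_{\PP}(1)$ fraction (in the $w_{s,t}$-weighted sense) of indices $s$ contributes a breakpoint or a clipped summand inside $N_t$. Hence on $N_t$ the score is, up to a lower-order remainder, linear with slope $-\hat S_t$ where $\hat S_t=\sum_s w_{s,t}\ind(|X_s^2-\sigma_t^2|<c_t/w_{s,t})\in(1-o_{\PP}(1),1]$, so that $\hat\delta_t=g_t(\sigma_t^2)/\hat S_t+(\text{remainder})$, with the remainder and the gap $1/\hat S_t-1$ both controlled under conditions~1--3. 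Summing over $t$,
$$
    \frac1T\sum_{t=1}^T\frac{q_t\hat\delta_t}{\sigma_t^2}\le\frac1T\sum_{t=1}^T\frac{q_t\,g_t(\sigma_t^2)}{\sigma_t^2}+R_T,\qquad |R_T|=O_{\PP}\!\bigl(Q\sqrt{z/T}\bigr).
$$

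The remaining work is to bound $T^{-1}\sum_t q_t g_t(\sigma_t^2)/\sigma_t^2$. Decompose $g_t(\sigma_t^2)=\EE[g_t(\sigma_t^2)\mid\{\sigma_s\}]+\{g_t(\sigma_t^2)-\EE[g_t(\sigma_t^2)\mid\{\sigma_s\}]\}$. Expanding the clipped centered score $\psi_c(x)=\min(|x|,c)\sgn(x)$ around $\sigma_s^2=\sigma_t^2$ gives $\EE[g_t(\sigma_t^2)\mid\{\sigma_s\}]=\delta_{0,t}+O(\delta_{1,t})-(\text{clipping bias})$, with the clipping bias bounded by $\sum_s w_{s,t}^2\kappa_s/c_t=\kappa^{\dagger}_t/(n^{\dagger}_{\eff}c_t)\asymp\sqrt{\kappa^{\dagger}_t z/T}$; this is exactly where the central fourth moment $\kappa^{\dagger}_t$ replaces $\tilde\kappa^{\dagger}_t$, since the thresholding now acts on $X_s^2-\sigma_t^2$ rather than on $X_s^2$. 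Tracking constants yields $T^{-1}\sum_t q_t\EE[g_t(\sigma_t^2)\mid\{\sigma_s\}]/\sigma_t^2\le T^{-1}\sum_t(\delta_{0,t}+4\delta_{1,t})q_t/\sigma_t^2+CQ\sqrt{z/T}$. For the centered part I would regard $T^{-1}\sum_t q_t\{g_t(\sigma_t^2)-\EE[g_t(\sigma_t^2)\mid\{\sigma_s\}]\}/\sigma_t^2$ as a sum over $t$ of mean-zero functions of $X_t,\dots,X_{t+m}$; because neighboring windows overlap (and, if the data-driven $\hat\tau_t$ is used for $c_t$ via Theorem~\ref{thm2}, because $\hat\tau_t$ itself mixes nearby returns) this sequence is $O(m)$-dependent rather than independent, so I would invoke the time-series Bernstein inequality of \citet{zhang2021robust} (Lemma~\ref{lem1}) with per-block envelope $\asymp Q\sqrt{\kappa^{\dagger}_t/(Tz)}$ and total variance $\asymp Q^2\kappa^{\dagger}_t/T$ to obtain $\le CQ\sqrt{z/T}$ with probability at least $1-e^{-z}$. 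Combining with the localization step and absorbing $\max_{t,u}\{\kappa_{t+u}/\sigma_t^4\}$ and $\alpha$ into $C$ gives the theorem.

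The hard part is the reduction in the second paragraph. Unlike $(\hat\sigma_e)_t^2$ in Theorem~\ref{thm4}, which is an explicit weighted average so that smoothing and clipping decouple linearly, here the proxy must be linearized around $\sigma_t^2$, and the naive estimate $|\hat\delta_t|\le|g_t(\sigma_t^2)|/\alpha$ is far too lossy: since $g_t(\sigma_t^2)\asymp\sqrt{\kappa^{\dagger}_t/n^{\dagger}_{\eff}}$ per $t$, replacing $q_t\hat\delta_t$ by $|q_t||g_t(\sigma_t^2)|/\alpha$ destroys the cross-$t$ cancellation and produces the much slower rate $\sqrt{z/n^{\dagger}_{\eff}}$ after averaging, recalling that $n^{\dagger}_{\eff}$ may be as small as order $\sqrt{T}$ by condition~2. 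One must therefore show that the local slope $\hat S_t$ is uniformly close to $1$ and that the piecewise-linear remainder of $g_t$ on $N_t$ is genuinely of lower order --- which is precisely what condition~4, the inflation $c_t=\hat\tau_t\sqrt{T/n^{\dagger}_{\eff}}$, and conditions~1--3 are designed to guarantee --- so that the signed, cancellation-preserving sum $T^{-1}\sum_t q_t g_t(\sigma_t^2)/\sigma_t^2$ survives intact all the way to the concentration step.
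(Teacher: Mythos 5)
Your proposal is correct in outline but takes a genuinely different route from the paper at the central concentration step. The paper never linearizes the score: it decomposes $(\hat\sigma_H)_t^2/\sigma_t^2-1$ into the fluctuation $(\hat\sigma_H)_t^2-\EE[(\hat\sigma_H)_t^2]$ plus a bias, applies the dependence-adjusted Bernstein inequality (Lemma \ref{lem1}) \emph{directly} to the sequence of implicitly defined minimizers, and invokes the strong-convexity perturbation bound of Lemma \ref{lem2}, $|\tilde\theta^*-\theta^*|\le\frac{\alpha+1}{\alpha}\sup_\theta|\tilde F'(\theta)-F'(\theta)|$, only to bound the functional-dependence coefficients $\gamma_j$ of the minimizer when a single $X_{t+j}$ is replaced by an independent copy. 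The signed cross-$t$ cancellation is therefore preserved automatically: the difficulty you flag with the lossy bound $|\hat\delta_t|\le\frac{\alpha+1}{\alpha}|g_t(\sigma_t^2)|$ is answered by not linearizing at all, since the factor $\frac{\alpha+1}{\alpha}$ enters only as a constant in the variance proxy $\|Y_\cdot\|_2$. The bias is then read off the population first-order condition evaluated at $(\hat\sigma_H)_t^2$, which is where $\kappa_s$ rather than $\tilde\kappa_s$ and an extra $\EE[(\sigma_t^2-(\hat\sigma_H)_t^2)^2]$ term (again controlled via Lemma \ref{lem2}) appear; your clipping-bias computation at $\sigma_t^2$ reaches the same $\delta_{0,t}+4\delta_{1,t}$ plus $\sqrt{\kappa^{\dagger}_t z/T}$ form. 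What your Bahadur-type reduction to $g_t(\sigma_t^2)$ buys is that Lemma \ref{lem1} is then applied to an explicit weighted sum, exactly as in Theorem \ref{thm4}, rather than to an argmin; the price is the uniform-in-$t$ control of the local slope $\hat S_t$, and that is the one place where your sketch is thinner than it looks. Condition 4 only gives $\alpha$-strong convexity, whereas your remainder $R_T$ needs $1-\hat S_t=O(1/\sqrt{T})$ uniformly over $t$, because on the localization event $\max_t|g_t(\sigma_t^2)|\asymp\sqrt{\kappa^{\dagger}_t z}$ and $n^\dagger_{\eff}$ may be as small as order $\sqrt{T}$ by condition 2; a plain Bernstein bound on the weighted fraction of clipped summands only yields $1-\hat S_t\lesssim z/n^\dagger_{\eff}$ after the union bound over $t$, which leaves a spurious factor of $z$ in $R_T$ relative to the claimed $CQ\sqrt{z/T}$ with $C$ free of $z$. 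This can be repaired with a Poisson-type tail showing that with high probability only $O(1)$ observations per window are clipped, but that step must be supplied; the paper's argmin-stability route avoids the issue entirely.
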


\begin{rem}\label{rem2}
Compared with the previous two benchmark proxies, the main advantage of the Huber volatility proxy is that its average deviation error depends on the central fourth moment of the returns instead of the absolute one. 
\end{rem}

\begin{rem}
$\alpha$-strong convexity is a standard assumption that can be verified for Huber loss. For example, Proposition B.1 of \cite{chen2020robust} shows that the equally weighted Huber loss $\cL_{\tau_t}(\theta; \{1/(m+1)\}_{s = t} ^ {t + m})$ enjoys strong convexity for $\alpha = 1/4$ in the region $(\sigma_t^2 - r, \sigma_t^2 + r)$ where $r \asymp \tau$ with probability at least $1-e^{-t}$ when $m \ge C(1+t)$. Such strong convexity paves the way to apply Lemma \ref{lem1} to the Huber proxies, so that we can establish their Bernstein-type concentration. Please refer to Lemma \ref{lem2} in the appendix for details. 
\end{rem}

\begin{rem}
To achieve the optimal $\sqrt{\log T/T}$ rate of convergence if we choose $z \asymp \log T$, we need to additionally assume $\frac1T \sum_{t=1}^T \frac{(\delta_{0,t} + 4\delta_{1,t})q_t}{\sigma_t^2}$ is of a smaller order, which in practice requires certain volatility smoothness.
\end{rem}


\section{Robust predictors} \label{sec4}

In this section, we further take into account the randomness from predictors and study bounding \rnum{2}, the difference between empirical risk and conditional risk. 


\subsection{Robust volatility predictor} \label{sec4.1}

We essentially follow \eqref{eq:huber_weighted}, the sample-weighted Huber mean estimator, to construct robust volatility predictors. The only difference is that now we cannot touch any data beyond time $t$; we can only look backward at time $t$. Consider the following volatility predictor based on the past $m$ data points with backward exponential-decay weights: 
\beq
    \label{eq:huber_weighted_predict}
    h_t := \argmin_{\theta \in \RR} \sum_{s = t - m} ^ {t - 1} \nu_{s, t}\ell_{{\tau_t / \nu_{s, t}}}(X_s ^ 2 - \theta) =: \mathcal L_{\tau_t}(\theta; \{\nu_{s, t}\}_{s = t - m} ^ {t - 1}). 
\eeq
Similarly to \eqref{eq:tuning_free_huber_weighted}, 
we iteratively solve the following two equations for $h_t$ and $\tau_t$ simultaneously: 
\beq
    \label{eq:tuning_free_huber_weighted_predict}
    \left\{
    \begin{aligned}
        & \sum_{s=t-m}^{t-1} \nu_{s,t} \min\bigg(|X_s^2 - h_t^2|, \frac{\tau_t}{\nu_{s,t}}\bigg) \sgn(X_s^2 - \sigma_t^2) = 0; \\
        & \sum_{s=t-m}^{t-1} \nu_{s,t}^2 \min\bigg(|X_s^2 - h_t^2|^2, \frac{\tau_t^2}{\nu_{s,t}^2}\bigg) /\tau_t^2 - z = 0,     
    \end{aligned}
    \right.
\eeq
where we recall that $\nu_{s,t} = \lambda^{t-1-s} / \sum_{s=t-m}^{t-1} \lambda^{t-1-s}$. 
Theorem \ref{thm1} showed concentration of $\hat\sigma_t^2$ around $\sigma_t^2$, i.e. the loss is MSE. More generally, we hope to give results on $\PP(L(\sigma_t^2, h_t) - \min_h L(\sigma_t^2, h_t) > \varepsilon)$. According to (a) in Section \ref{sec2.1}, $\min_h L(\sigma_t^2, h_t) = f(\sigma_t^2) + B(\sigma_t^2) +  C(\sigma_t^2) \sigma_t^2$. Therefore, we hope to bound 
$$
    \PP(|L(\sigma_t^2, h_t) - L(\sigma_t^2, \sigma_t^2)| > \varepsilon)
$$
This should be easy to control if we assume smoothness of the loss fucntion. We give the following theorem.

\begin{thm} \label{thm6}
Assume there exist $b, B$ such that $sup_{|h-\sigma_t^2| \le b} | \partial L(\sigma_t^2, h) / \partial h| \le B$. 
If $\tau_t =\sqrt{\frac{\kappa ^ {\ddagger}_t}{n^\ddagger_{\eff} z}}$, under Assumption \ref{assump1}, for $n^\ddagger_{\eff} \ge 16 (1 \vee \kappa ^ {\ddagger}_t / b^2) z$, we have
$$
\PP\bigg(L(\sigma_t^2, h_t) - \min_h L(\sigma_t^2, h)  \le 4 B \sqrt{\kappa ^ {\ddagger}_t z/n^\ddagger_{\eff}} \bigg) \ge 1-2e^{-z+ |\Delta_{0,t}| \sqrt{{n^\ddagger_{\eff} z}/{\kappa ^ {\ddagger}_t}} + 2\Delta_{1,t}{n^\ddagger_{\eff} z}/{\kappa ^ {\ddagger}_t} }
$$
where $\hat\sigma_t^2$ is the solution of the first equation.
\end{thm}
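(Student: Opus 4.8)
The plan is to obtain Theorem \ref{thm6} as a direct corollary of Theorem \ref{thm1}, composed with a one-step smoothness argument that converts a bound on the estimation error $|h_t-\sigma_t^2|$ into a bound on the excess loss $L(\sigma_t^2,h_t)-\min_h L(\sigma_t^2,h)$.

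First I would note that $h_t$, the solution of the first equation of \eqref{eq:tuning_free_huber_weighted_predict} (equivalently the sample-weighted Huber estimator \eqref{eq:huber_weighted_predict}), has exactly the structure of the estimator $\hat\sigma_t^2$ analysed in Theorem \ref{thm1}: the only change is that the forward exponential-decay weights $\{w_{s,t}\}_{s=t}^{t+m}$ are replaced by the backward weights $\{\nu_{s,t}\}_{s=t-m}^{t-1}$, which again sum to one, and correspondingly $\delta_{0,t},\delta_{1,t},n^\dagger_{\eff},\kappa^\dagger_t$ are replaced by their backward counterparts $\Delta_{0,t},\Delta_{1,t},n^\ddagger_{\eff},\kappa^{\ddagger}_t$, which were defined precisely so that this substitution is clean. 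Since the remark immediately following Theorem \ref{thm1} records that bound \eqref{eq:sigma_hat_bound} holds for an \emph{arbitrary} sample-weight series — and nothing in that argument uses that the window sits to the right of $t$ or contains $t$ — I would apply Theorem \ref{thm1} verbatim with these substitutions. With $\tau_t=\sqrt{\kappa^{\ddagger}_t/(n^\ddagger_{\eff}z)}$ and $n^\ddagger_{\eff}\ge 16z$ (which is implied by the hypothesis), this gives
\[
\PP\bigl(|h_t-\sigma_t^2|\le 4\sqrt{\kappa^{\ddagger}_t z/n^\ddagger_{\eff}}\bigr)\ge 1-2e^{-z+|\Delta_{0,t}|\sqrt{n^\ddagger_{\eff}z/\kappa^{\ddagger}_t}+2\Delta_{1,t}n^\ddagger_{\eff}z/\kappa^{\ddagger}_t}.
\]

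Call the event in this display $\mathcal E$. Next I would upgrade the estimation bound to an excess-risk bound on $\mathcal E$. The full hypothesis $n^\ddagger_{\eff}\ge 16(1\vee \kappa^{\ddagger}_t/b^2)z$ is exactly what guarantees $4\sqrt{\kappa^{\ddagger}_t z/n^\ddagger_{\eff}}\le b$, so on $\mathcal E$ the entire segment joining $\sigma_t^2$ to $h_t$ lies inside $\{h:|h-\sigma_t^2|\le b\}$, on which $|\partial L(\sigma_t^2,h)/\partial h|\le B$. The fundamental theorem of calculus then yields
\[
\bigl|L(\sigma_t^2,h_t)-L(\sigma_t^2,\sigma_t^2)\bigr|=\Bigl|\int_{\sigma_t^2}^{h_t}\frac{\partial L(\sigma_t^2,h)}{\partial h}\,dh\Bigr|\le B\,|h_t-\sigma_t^2|\le 4B\sqrt{\kappa^{\ddagger}_t z/n^\ddagger_{\eff}}.
\]
Finally, since $L$ is mean-pursuit (property (a)) — equivalently, since $L$ has the form \eqref{eq:loss_form}, so that $\partial L(\sigma_t^2,h)/\partial h=C'(h)(\sigma_t^2-h)$ vanishes at $h=\sigma_t^2$ and, as $C$ is nonincreasing, switches sign there — we have $\min_h L(\sigma_t^2,h)=L(\sigma_t^2,\sigma_t^2)$. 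Combining the last two displays gives $L(\sigma_t^2,h_t)-\min_h L(\sigma_t^2,h)\le 4B\sqrt{\kappa^{\ddagger}_t z/n^\ddagger_{\eff}}$ on $\mathcal E$, which is the asserted bound with the asserted exceptional probability.

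I do not anticipate a real obstacle: all the probabilistic work — the Bernstein-type control of the weighted Huber score — is already carried out in Theorem \ref{thm1}. The only two places that deserve care are (i) confirming that the proof of Theorem \ref{thm1} is genuinely weight-agnostic and indifferent to the location of the averaging window, so that the strictly-past backward window is covered (this is what the remark after Theorem \ref{thm1} asserts, but it is worth re-reading the argument to be sure the bias terms $\Delta_{0,t},\Delta_{1,t}$ are handled symmetrically), and (ii) checking the elementary arithmetic that the inflated requirement $n^\ddagger_{\eff}\ge 16(1\vee\kappa^{\ddagger}_t/b^2)z$ is exactly strong enough to trap the $h_t$-confidence interval inside the radius-$b$ ball, which is what legitimises using the uniform derivative bound $B$ along the whole integration path. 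Everything else is routine.
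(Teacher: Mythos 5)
Your proposal is correct and follows essentially the same route as the paper: apply Theorem \ref{thm1} verbatim with the backward weights to get concentration of $h_t$ around $\sigma_t^2$, then use the local derivative bound $B$ on that event to convert the estimation error into an excess-loss bound. Your write-up is in fact more careful than the paper's, since you make explicit why the strengthened sample-size condition $n^\ddagger_{\eff}\ge 16(1\vee\kappa^{\ddagger}_t/b^2)z$ is needed (to trap the confidence radius inside the ball of radius $b$ where the derivative bound holds) and why $\sigma_t^2$ is the minimizer of $h\mapsto L(\sigma_t^2,h)$.
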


\begin{rem}
Here for notational simplicity, we used the same estimation horizon $m$, same exponential-decay $\lambda$ for constructing both predictors and proxies. But in practice, of course they do not necessarily need to be the same. 
In our real data example, we will use a slower decay for constructing predictors while using a faster decay for proxies, which seems to be the common practice for real financial data, where we typically use more data for constructing predictors and less data for constructing proxies. 
We will stick to $m$ equals twice the half-life so that equivalently, we use a longer window for predictors and shorter window for proxies. 
\end{rem}

\begin{rem} \label{rem8}
In addition, here we also simplify the theoretical results by assuming the same $z$ for constructing proxies and predictors. We do not need to use the same $z$ controlling the tail probability for predictors and proxies practically. 
For predictors, as we focus on local performance, it is more natural to use $z = \log n^\ddagger_{\eff}$ following \cite{wang2020new}. For proxies, as we focus on overall evaluation, for a given $T$ we can take $z = \log T$. Sometimes, we want to monitor the risk evaluation as $T$ grows, then a changing $z$ may not be a good choice; we do not want to re-solve the local weighted tuning-free Huber problem every time $T$ changes. Therefore, we recommend to use $z = C \log n^\dagger_{\eff}$ for a slightly larger $C$, e.g. $z = 2 \log n^\dagger_{\eff}$ as in our real data analysis. 
\end{rem}

\subsection{Concentration of robust and non-robust predictors} \label{sec4.2}

Recall a robust loss satisfies $\EE\{L(\hat\sigma_t^2, h_{it}) | \mathcal G_t\} = \EE[f(h_{it})] + B(\hat\sigma_t^2) + \EE[C(h_{it})] \hat\sigma_t^2$.
So we further bound \rnum{2} as follows: 
\begin{equation*}
\begin{aligned}
\text{\rnum{2}} & = \PP\bigg[\bigg|\frac{1}{T} \sum_{t=1}^T \{f(h_{t}) - \EE f(h_{t})\}  +  \frac{1}{T} \sum_{t=1}^T \{C(h_{t}) - \EE C(h_{t})\}\hat\sigma_t^2 \bigg| > \varepsilon \bigg]  \\
& \le \PP\bigg[\bigg|\frac{1}{T} \sum_{t=1}^T f(h_{t}) - \EE f(h_{t}) + \{C(h_{t}) - \EE C(h_{t})\}\sigma_t^2 \bigg| \\
& \qquad \qquad \qquad \qquad \qquad + \bigg| \frac{1}{T} \sum_{t=1}^T (C(h_{t}) - \EE C(h_{t}))(\hat\sigma_t^2 - \sigma_t^2) \bigg| > \varepsilon\bigg] \\
& \le \underbrace{\PP\bigg[\bigg|\frac{1}{T} \sum_{t=1}^T (L(\sigma_t^2, h_t) - \EE[L(\sigma_t^2, h_t)]) \bigg| > \frac{\varepsilon}{2}\bigg]}_{\Delta_A} + \underbrace{\PP\bigg[\bigg| \frac{1}{T} \sum_{t=1}^T (C(h_{t}) - \EE[C(h_{t})])(\hat\sigma_t^2 - \sigma_t^2) \bigg| > \frac{\varepsilon}{2} \bigg]}_{\Delta_B}\,.
\end{aligned}
\end{equation*}

We wish to show that both $\Delta_A$ and $\Delta_B$ can achieve the desired rate of  $\sqrt{\log T/T}$ for a broad class of predictors and loss functions.  When $h_t$ is the proposed robust predictor in Section \ref{sec4.1}, we can obtain sharp rates of $\Delta_A$ and $\Delta_B$ as expected. Moreover, for the vanilla (non-robust) EWMA predictor $h_t = \sum_{s=t-m}^{t-1} \nu_{s,t} X_s^2$, we are also able to obtain the same sharp rates for $\Delta_A$ and $\Delta_B$ for Lipschitz robust losses. The third option is to truncate the predictor, i.e. $h_t = (\sum_{s=t-m}^{t-1} \nu_{s,t} X_s^2) \wedge M$ with some large constant $M$ and we can control the two terms for general robust losses. The bottom line is that for non-robust predictors, we need to make sure the loss does not go too crazy either via shrinking predictor's effect on the loss (bounded Lipschitz) or clipping the predictor directly.

The interesting observation is that bounding \rnum{2}, or the difference between empirical risk and conditional risk, requires minimal assumption such that most of the reasonable predictors, say in the M-estimator form, have no problem satisfying the concentration bound with proper choice of a robust proxy, although we do require the loss not to become too wild (see Theorem 7 for details). 
Technically, the concentration here only cares about controlling variance and does not care about the bias between $\EE[L(\sigma_t^2, h_t)]$ and $L(\sigma_t^2, \sigma_t^2)$ and between $\EE[C(h_{t})]$ and $C(h_{t})$. There is no need to carefully choose the truncation threshold to balance variance and bias optimally.

\begin{thm}\label{thm7}
For any $t \in [T]$, $\cL_{\tau_t}(\theta; \{\nu_{s, t}\}_{s = t-m} ^ {t-1})$ is $\alpha$-strongly convex for $\theta \in (\sigma_t ^2/2, 3\sigma_t ^2/2)$.
Choose $\hat \sigma _t = (\hat\sigma_e)_t$ (or $(\hat\sigma_H)_t$) with robustification parameter $c_t$ specified in Theorem \ref{thm4} (or Theorem \ref{thm5}). Under the assumptions of Theorem \ref{thm4} (or Theorem \ref{thm5}),
the bound 
$$
\PP\bigg(\bigg|\frac{1}{T} \sum_{t=1}^T L(\hat\sigma_t^2, h_{t}) -  \frac{1}{T} \sum_{t=1}^T \bigg(\EE f(h_{t}) + B(\hat\sigma_t^2) + \EE C(h_{t}) \hat\sigma_t^2\bigg) \bigg| < C' \sqrt{z/T} \bigg) \ge 1 -  (2T+5) e^{-z/2}
$$
holds for all the following three cases:
\begin{enumerate}
    \item For the robust predictors proposed in Section \ref{sec4.1}, assume $sup_{|h-\sigma_t^2| \le b} | \partial L(\sigma_t^2, h) / \partial h| \le B_t(b)$, if $|\Delta_{0,t}| \sqrt{{n^\ddagger_{\eff}}/{\kappa ^ {\ddagger}_t}} + 2\Delta_{1,t}{n^\ddagger_{\eff}}/{\kappa ^ {\ddagger}_t} < 1/2$ and $n^\ddagger_{\eff} \ge 64 z \max_t \kappa ^ {\ddagger}_t/\sigma_t^4$, the above bound holds with $C'$ depending on $\max_t B_t(\sigma_t^2/2)$ and C in Theorem \ref{thm4} (or Theorem \ref{thm5}).
    
    \item For the clipped vanilla non-robust exponentially weighted moving average predictors $h_t = (\sum_{s=t-m}^{t-1} \nu_{s,t} X_s^2) \wedge M$, assume $sup_{\sigma_t^2/2 \le h \le b} | \partial L(\sigma_t^2, h) / \partial h| \le B_t(b)$, if $|\Delta_{0,t}| \sqrt{{n^\ddagger_{\eff}}/{{\tilde\kappa^{\ddagger}_t}}} + 2\Delta_{1,t}{n^\ddagger_{\eff}}/{{\tilde\kappa^{\ddagger}_t}} < 1/2$ and $n^\ddagger_{\eff} \ge 64 z \max_t {\tilde\kappa^{\ddagger}_t}/\sigma_t^4$, the above bound holds with $C'$ depending on $\max_t B_t(M)$ and $C$ in Theorem \ref{thm4} (or Theorem \ref{thm5}).
    
    \item For the vanilla non-robust exponentially weighted moving average predictors $h_t = \sum_{s=t-m}^{t-1} \nu_{s,t} X_s^2$, assume $|\partial L(\sigma_t^2, h) / \partial h| \le B_0$ and $|L(\sigma_t^2, h)| \le M_0$ for $\forall \sigma_t^2, h$, if $|\Delta_{0,t}|\allowbreak \sqrt{{n^\ddagger_{\eff}}/{{\tilde\kappa^{\ddagger}_t}}} + 2\Delta_{1,t}{n^\ddagger_{\eff}}/{{\tilde\kappa^{\ddagger}_t}} < 1/2$ and $n^\ddagger_{\eff} \ge 64 z \max_t {\tilde\kappa^{\ddagger}_t}/\sigma_t^4$, the above bound holds with $C'$ depending on $B_0, M_0$ and $C$ in Theorem \ref{thm4} (or Theorem \ref{thm5}).
\end{enumerate}
\end{thm}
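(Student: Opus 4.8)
The plan is to reduce the claim to the decomposition $\rnum{2}\le\Delta_A+\Delta_B$ already displayed right before the theorem. Since $L$ is a robust loss, $L(\sigma^2,h)=f(h)+B(\sigma^2)+C(h)\sigma^2$, and since $h_t$ and $\hat\sigma_t^2$ are independent under Assumption \ref{assump1}, the quantity inside the probability equals $\frac1T\sum_{t=1}^T\{L(\hat\sigma_t^2,h_t)-\EE(L(\hat\sigma_t^2,h_t)\mid\mathcal G_t)\}$, which is exactly $\rnum{2}$ with $\varepsilon=C'\sqrt{z/T}$. I would then bound $\Delta_A$ and $\Delta_B$ separately, each by half of $(2T+5)e^{-z/2}$. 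The common tool is the Bernstein-type inequality for sums of sliding-window functionals of $\{X_s^2\}$ — Lemma \ref{lem1} in the appendix — which applies because $\{L(\sigma_t^2,h_t)\}_t$ is a width-$m$ functional ($h_t$ uses only $X_{t-m}^2,\dots,X_{t-1}^2$) and $\{(C(h_t)-\EE C(h_t))(\hat\sigma_t^2-\sigma_t^2)\}_t$ is a width-$2m$ functional of disjoint past/future windows; this is where the overlap of squared returns across neighboring time points is absorbed. Because we compare only with the conditional expectation, no bias needs to be controlled, only the variance and almost-sure range of the summands.

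For $\Delta_A$ I need $Z_t:=L(\sigma_t^2,h_t)-\EE L(\sigma_t^2,h_t)$ to have variance $O(1)$ (in fact $o(1)$) and, on a high-probability event, range $O(1)$. (i) For the robust predictor of Section \ref{sec4.1}, the assumed $\alpha$-strong convexity of $\cL_{\tau_t}(\cdot;\{\nu_{s,t}\})$ on $(\sigma_t^2/2,3\sigma_t^2/2)$ together with concentration of the weighted-Huber gradient at $\theta=\sigma_t^2$ — the backward analogue of Theorems \ref{thm1} and \ref{thm6}, which the hypotheses $|\Delta_{0,t}|\sqrt{n^\ddagger_{\eff}/\kappa^\ddagger_t}+2\Delta_{1,t}n^\ddagger_{\eff}/\kappa^\ddagger_t<1/2$ and $n^\ddagger_{\eff}\ge 64z\max_t\kappa^\ddagger_t/\sigma_t^4$ are tailored for — forces $h_t\in(\sigma_t^2/2,3\sigma_t^2/2)$ with probability $1-2e^{-z/2}$; there $|L(\sigma_t^2,h_t)-L(\sigma_t^2,\sigma_t^2)|\le B_t(\sigma_t^2/2)|h_t-\sigma_t^2|$, and Theorem \ref{thm6} controls $|h_t-\sigma_t^2|$, hence the range and variance of $Z_t$. (ii) For the clipped EWMA the cap $M$ handles the upper side, and a one-sided Bernstein bound on $\sum_s\nu_{s,t}X_s^2$ (light left tail since $X_s^2\ge0$, under the same moment/smoothness hypotheses) gives $h_t\ge\sigma_t^2/2$, so $|\partial L/\partial h|\le B_t(M)$ on the relevant segment. (iii) For the vanilla EWMA, $|L|\le M_0$ and $|\partial L/\partial h|\le B_0$ bound the range outright and give $\Var(Z_t)\le B_0^2\Var(h_t)\lesssim B_0^2\tilde\kappa^\ddagger_t/n^\ddagger_{\eff}$. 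Union-bounding the good-region events over $t\in[T]$ in cases (i)--(ii) produces the $2Te^{-z/2}$ term, and a standard truncation argument absorbs the negligible shift of the truncated mean. Lemma \ref{lem1} then bounds $\Delta_A$ by a constant multiple of $\sqrt{z/T}$ up to an $O(e^{-z/2})$ exceptional probability.

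For $\Delta_B$, write the summand as $\xi_t\,\sigma_t^2\,(\hat\sigma_t^2/\sigma_t^2-1)$ with $\xi_t:=C(h_t)-\EE C(h_t)$, which is bounded on the good-region events of the previous step (for QL this uses $h_t\ge\sigma_t^2/2$ to bound $1/h_t$; for the vanilla EWMA, $C$ is bounded by the global Lipschitz assumption). Because $h_t$ (past window) and $\hat\sigma_t^2$ (future window) are independent for fixed $t$ and $\xi_t$ is centered, each summand has mean zero and variance $\Var(C(h_t))\,\EE\{(\hat\sigma_t^2-\sigma_t^2)^2\}$, which is $o(1)$ by the proxy MSE bounds underlying Theorems \ref{thm4} and \ref{thm5}; and $|\hat\sigma_t^2-\sigma_t^2|$ is almost surely $O(\sqrt{T/z})$ because $c_t$ is inflated by $\sqrt{T/n^\dagger_{\eff}}$, so the Bernstein range term $Rz/T$ is exactly of order $\sqrt{z/T}$ — no slack, but enough. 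Applying Lemma \ref{lem1} to $\{\xi_t(\hat\sigma_t^2-\sigma_t^2)\}_t$ gives $\Delta_B\lesssim\sqrt{z/T}$ up to an $O(e^{-z/2})$ event, and the handful of Lemma-\ref{lem1} and Theorem-\ref{thm4}/\ref{thm5} invocations across the two pieces supplies the residual $5e^{-z/2}$; tracking constants, $C'$ depends on $\max_t B_t(\cdot)$ (resp. $B_0,M_0$) and on the $C$ of Theorem \ref{thm4}/\ref{thm5}.

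The hard part will be the $\Delta_B$ estimate: the summands are products of a predictor-side and a proxy-side fluctuation, and the deliberate inflation of the clipping level makes $|\hat\sigma_t^2-\sigma_t^2|$ as large as $\sqrt{T/z}$, so the Bernstein range term only just matches the target rate and every constant must be tracked; relatedly, one must check that $\xi_t(\hat\sigma_t^2-\sigma_t^2)$ — and, in case (i), $L(\sigma_t^2,h_t)$ itself, which is only implicitly defined through an M-estimation fixed point — genuinely satisfies the weak-dependence hypotheses of Lemma \ref{lem1}, which is precisely where the strong-convexity assumption on the backward Huber loss is invoked. Verifying the uniform lower bound $h_t\ge\sigma_t^2/2$ for the two EWMA-type predictors, and the bookkeeping of the $\Delta_{0,t},\Delta_{1,t}$ corrections in the exponent, are the remaining, more routine, technical points.
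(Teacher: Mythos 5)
Your plan coincides with the paper's proof in all of its essential moves: the same reduction of \rnum{2} to $\Delta_A+\Delta_B$, the same reliance on Lemma \ref{lem1} for the sliding-window functionals (width $m$ for $\Delta_A$, width $2m+1$ for $\Delta_B$), the same localization of $h_t$ into $(\sigma_t^2/2,3\sigma_t^2/2)$ via the Theorem \ref{thm6}-type concentration combined with local strong convexity and a union bound over $t$ (producing the $2Te^{-z/2}$ term), and the same positivity/truncation trick for the lower bound $h_t\ge \sigma_t^2/2$ in cases (ii)--(iii) (the paper implements your ``one-sided Bernstein for nonnegative summands'' by introducing $\check h_t = \sum_{s}\nu_{s,t}\min(X_s^2,\tau_t/\nu_{s,t})$, noting $h_t\ge\check h_t$, and concentrating $\check h_t$).

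The one step that would fail as written is your treatment of the range in $\Delta_B$. You propose to feed the \emph{almost-sure} bound $|\hat\sigma_t^2-\sigma_t^2|=O(\sqrt{T/z})$ into the Bernstein range term and argue that $Rz/T\asymp\sqrt{z/T}$ suffices. That arithmetic is valid for an i.i.d.\ Bernstein inequality, but Lemma \ref{lem1} is the dependent-data version and its range term is $2C_2M(\log T)^2y$: with $M\asymp Q\sqrt{T/z}$ and $y\asymp Q\sqrt{zT}$ this term is of order $Q^2T(\log T)^2$, against $y^2\asymp Q^2zT$, so the exponent degrades to $-cz/(\log T)^2$ and you do not recover an $e^{-cz}$-type exception probability (for $z\asymp\log T$ the bound is vacuous). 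The paper avoids this by first conditioning on the event $\max_{t\in[T]}|\hat\sigma_t^2-\sigma_t^2|\le C\sqrt{z}$, which holds with probability at least $1-e^{-z}$ by the arguments inside the proofs of Theorems \ref{thm4} and \ref{thm5}, and only then applying Lemma \ref{lem1} with $M\asymp Q\sqrt z$ --- exactly the device you already use for the localization of $h_t$ in $\Delta_A$; transplanting it to $\Delta_B$ closes the gap. The remaining items you flag (mean-zero of the product summand via independence of past and future windows, boundedness of $C(h_t)$ via $h_t\ge\sigma_t^2/2$ and monotonicity of $C$, and the $\gamma_j$ bounds through Lemma \ref{lem2}) are handled in the paper exactly as you describe.
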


\begin{rem}
The proof can be easily extended to more general robust or non-robust predictors of the M-estimator form. Theorem \ref{thm7} tells us that comparing robust predictors with optimal truncation for a single time point (rather than adjusting the truncation as in constructing proxies) and non-robust predictors (either with rough overall truncation when using a general loss or without any truncation when using a truncated loss) is indeed a valid thing to do, when we employ proper robust proxies. 
\end{rem}

\begin{rem}\label{rem10}
Although the first proxy achieves the optimal rate of convergence for comparing average conditional loss in Theorem \ref{thm3}, we did not manage to show it is valid for comparing the average empirical loss in Theorem \ref{thm7}.
The reason is that single time clipping has no concentration guarantee like Theorem \ref{thm1} for a single time point, therefore cannot ensure $|(\hat\sigma_c)_t^2 - \sigma_t^2|$ to be bounded with high probability for all $t$, which is important to make sure the sub-exponential tail in Bernstein inequality does not dominate the sub-Gaussian tail. 
Taking into consideration of central fourth moment versus absolute fourth moment (see Remark \ref{rem2}), we would recommend the third proxy as the best practical choice among our three proposals.
\end{rem}

\section{Numerical study} \label{sec5}

In this section, we first verify the advantage of the Huber mean estimator over the truncated mean in terms of estimating the variance through simulations. As illustrated by Theorem \ref{thm5}, the statistical error of the Huber mean estimator depends on the central moment, while that of the truncated mean depends on the absolute moment. Then we apply the proposed robust proxies for volatility forecasting comparison, using data from crypto currency market. Specifically, we focus on the returns of Bitcoin (BTC) quoted by Tether (USDT), a stable coin pegged to the US Dollar, in the years of 2019 and 2020, which witness dramatic volatility of Bitcoin.

\subsection{Simulations}

We first examine numerically the finite sample performance of the adaptive Huber estimator \citep{wang2020new} for variance estimation, i.e., we solve  \eqref{eq:tuning_free_huber} iteratively for $\theta$ given the data and $z$ until convergence. We first draw an independent sample $Y_1, \dots, Y_n$ of $Y$ that follows a heavy-tailed distribution. We investigate the following two distributions: 
\begin{enumerate}
\item Log-normal distribution $\mathrm{LN}(0, 1)$, that is, $\log Y \sim \cN(0,1)$.
\item Student's t distribution with degree of freedom $\mathrm{df} = 3$.
\end{enumerate}
Given that $\sigma^2 := \Var(Y) = \EE(Y^2) - (\EE Y)^2$, we estimate $\EE (Y^2)$ and $\EE Y$ separately and plug these mean estimators into the variance formula to estimate $\sigma^2$. Besides the Huber mean estimator, we investigate two alternative mean estimators as benchmarks: (a) the na\"ive sample mean; (b) the sample mean of data that are truncated at their upper and lower $\alpha$-percentile. We use MSE and QL (see \eqref{eq:mse_ql} for the definitions) to assess the accuracy of variance estimation. In our simulation, we set $n = 100$ and we repeat evaluating these three methods in 2000 independent Monte Carlo experiments. 

\begin{figure}
    \includegraphics[width=.5\columnwidth]{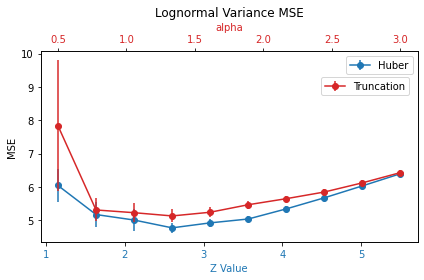}\hfill
     \includegraphics[width=.5\columnwidth]{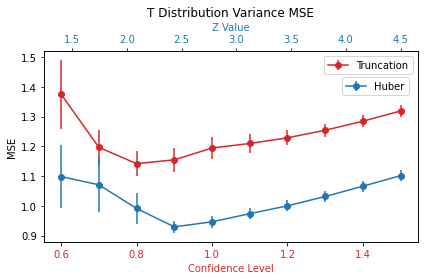}
    \caption{MSE comparisons between the truncated mean and the Huber mean.}
    \label{fig1}
\end{figure}    

Figure \ref{fig1} compares the MSE of the truncated and Huber variance estimators under log-normal distribution (left) and t-distribution (right). The red curve represents the MSE of the truncated method with different $\alpha$ values on the top x-axis, and the blue curve represents the MSE of the tuning-free Huber method with different $z$ values on the bottom x-axis. The error bars in both panels represent the standard errors of the MSE. For convenience of comparison, we focus on the ranges of $\alpha$ and $z$ that exhibit the smile shapes of MSE of the two methods. 
Note that the MSE of the sample variance, which is $40.14$ under the log-normal distribution and $10.17$ under the t-distribution, is too large to be presented in the plot. Figure \ref{fig1} shows that the Huber variance estimator outperforms the optimally tuned truncated method with $z$ roughly between $(1.5, 3.5)$ under the log-normal distribution and between $(1.5, 4)$ under the Student's t distribution. The performance gap is particularly large under the t distribution, where the optimal Huber method achieves around $20\%$ less MSE than the optimal truncated method.

\begin{figure}
    \includegraphics[width=.5\columnwidth]{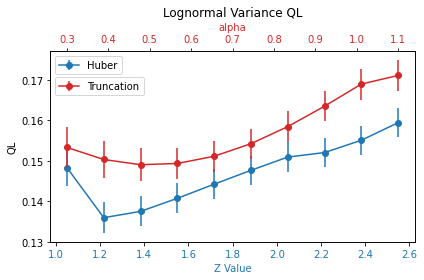}\hfill
     \includegraphics[width=.5\columnwidth]{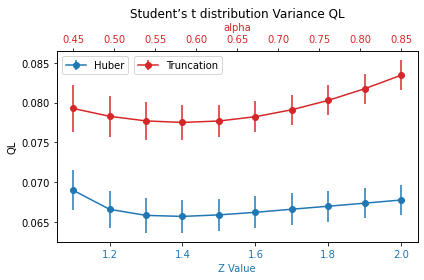}
    \caption{QL comparisons between truncation and Huber minimization.}
    \label{fig2}
\end{figure}  

Figure \ref{fig2} assesses the QL loss of the truncated and Huber estimators and displays a similar pattern as Figure \ref{fig1}.  
Again, the na\"ive sample variance is still much worse off with QL loss $0.1765$ under the log-normal distribution and $0.1201$ under the t-distribution; we therefore do not present it in the plots. The Huber approach continues to defeat the optimally tuned truncation method with any $z \in (1, 2)$ under both distributions of our focus. Together with the $z$ ranges where the Huber approach is superior in terms of MSE, our results suggest that $z = 1.5$ can be a good practical choice, at least to start with. Such a universal practical choice of $z$ demonstrates the adaptivity of the tuning-free Huber method. 

\subsection{BTC/USDT volatility forecasting}

We use the BTC/USDT daily returns to demonstrate the benefit of using robust proxies in volatility forecasting comparison.
\begin{figure}
    \includegraphics[width=\columnwidth]
        {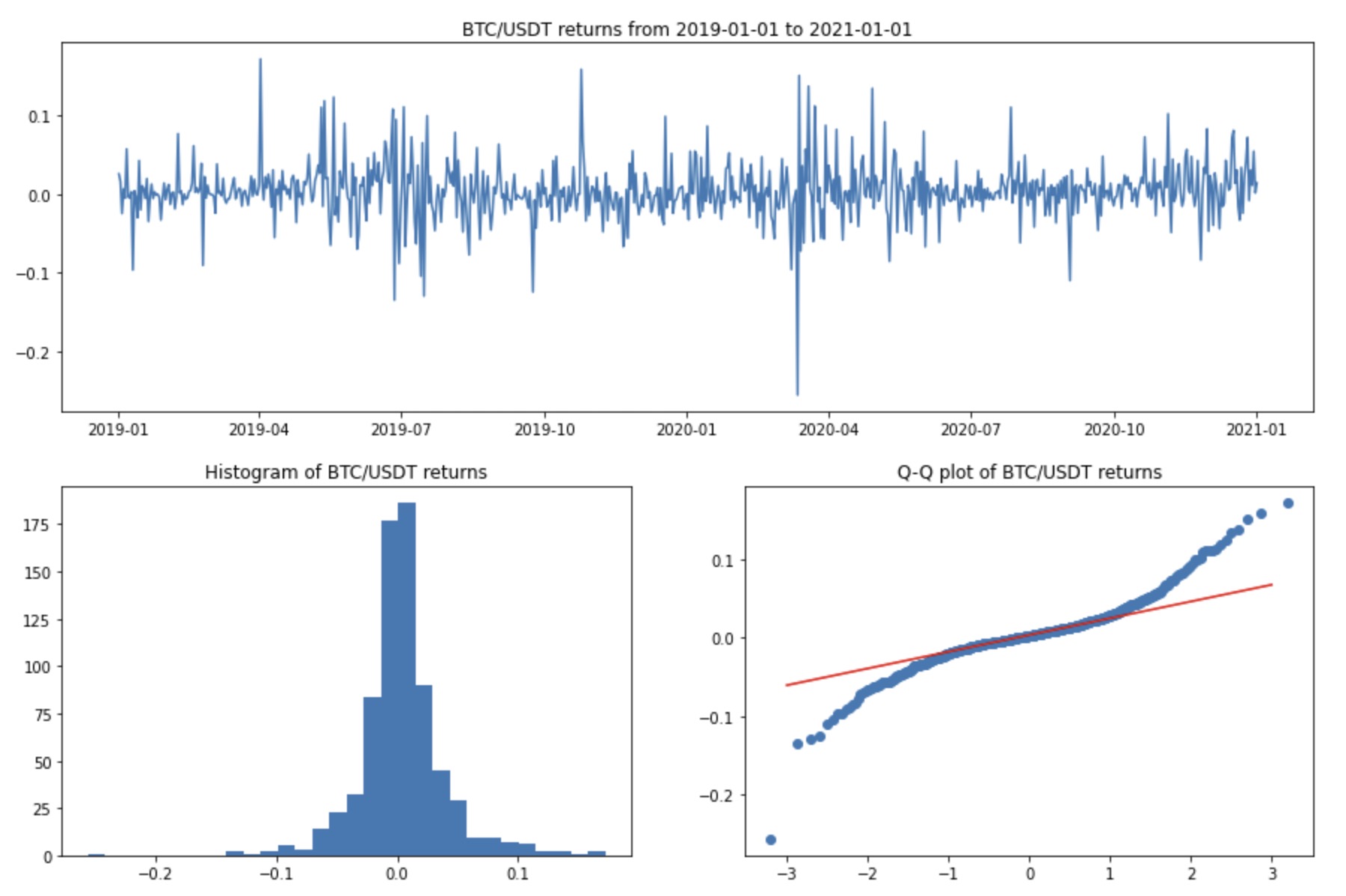}
    \caption{Time series, histogram and Q-Q plot of the BTC/USDT returns.}
    \label{fig3}
\end{figure} 
Fig \ref{fig3} presents the time series, histogram and normal QQ-plot of the daily BTC returns from 2019-01-01 to 2021-01-01. It is clear that the distribution of the returns is heavy-tailed, that the volatility is clustered and that there are extreme daily returns beyond 10\% or even 20\%. The empirical mean of the returns over this 2-year period is 38 basis points, which is quite close to zero compared with its volatility. We thus assume the population mean of the return is zero, so that the variance of the return boils down to the mean of the squared return. In the sequel, we focus on robust estimation of the mean of the squared returns. 

\subsubsection{Construction of volatility predictors and proxies}

Let $r_t$ denote the daily return of BTC from the end of day $t-1$ to the end of day $t$. We emphasize that a volatility predictor $h_t$ must be ex-ante. Here we construct $h_t$ based on $r_{t-m_b},\dots,r_{t-1}$ in the backward window of size $m_b$ and evaluate it at the end of day $t-1$. Our proxy $\hat\sigma_t^2$ for the unobserved variance $\sigma_t^2$ of $r_t$ is instead based on $r_{t},\dots,r_{t+m_f}$ in the forward window of size $m_f$.

We consider two volatility prediction approaches: (i) the vanilla EWA of the backward squared returns, i.e., $\sum_{s=t-m_b}^{t-1} \nu_{s,t} X_s^2$; (ii) the exponentially weighted Huber predictor proposed in Section \ref{sec4.1}. Each approach is evaluated with half lives equal to $7$ days (1 weeks) and $14$ days (2 weeks), giving rise to four predictors, which are referred to as EWMA\_HL7, EWMA\_HL14, Huber\_HL7, Huber\_HL14. We always choose $m_b$ to be twice the corresponding half life and set $z = n^\ddagger_{\eff}$ for the two 'Huber predictors. As for volatility proxies, we similarly consider two methods: (i) the vanilla forward EWA proxy, i.e., $\sum_{s=t}^{t + m_f} w_{s,t} X_s^2$; (ii) the robust Huber proxy proposed in Section \ref{sec3.3}. We set the half life of the exponential decay weights to be always $7$ days, $m_f = 14$ and $z = 2\log n^\dagger_{\eff}$. We evaluate the Huber approach on two time series of different lengths: $T = 720$ or $180$, which imply two different $c_t$ values that are used in \eqref{eq:huber_proxy}. We refer to the two corresponding Huber proxies as Huber\_$720$ and Huber\_$180$. 
Given the theoretical advantages of the Huber proxy as demonstrated in Remarks \ref{rem2} and \ref{rem10}, we do not investigate the first two proxies proposed in Section \ref{sec3.3}. 

Crypto currency market is traded 24 hours every day nonstop, which gives us $732$ daily returns from 2019-01-01 to 2021-01-01. After removing the first $27$ days used for predictor priming and the last $13$ days used for proxy priming, we then have $691$ data points left. For each day, we compute the four predictors and three proxies that are previously described. We plot the series of squared volatility (variance) proxies in Fig \ref{fig4}. As we can see, the vanilla EWMA proxy (blue line) is obviously the most volatile one, reaching the peak variance of $0.016$, or equivalently, volatility of $12.6\%$ in March, 2020, when the outbreak of COVID-19 in the US sparked a flash crash of the crypto market. In contrast, the Huber proxies react in a much milder manner, and the smaller $T$ we consider, the more truncation effect on the Huber proxies.

\begin{figure}
     \includegraphics[width=\columnwidth]
        {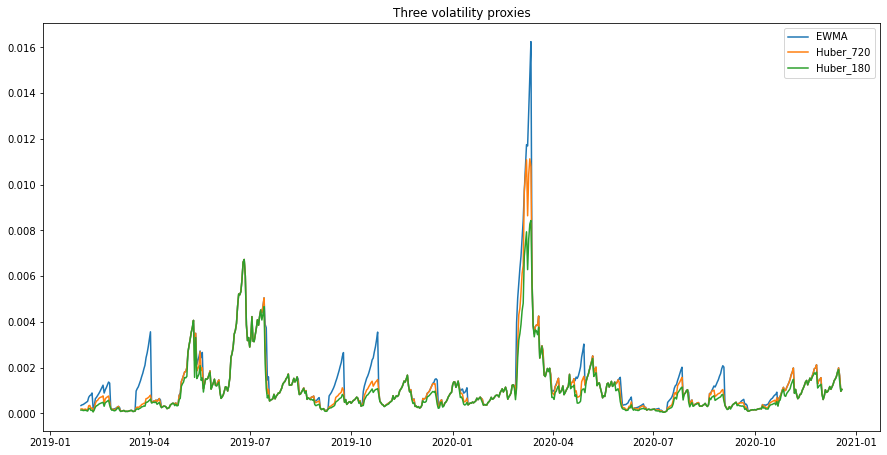}
    \caption{Three volatility proxies: non-robust EWMA proxy (blue) and the robust Huber proxies Huber\_$720$ (orange) and Huber\_$180$ (green). The plotted values are variances.}
    \label{fig4}
\end{figure}  

\subsubsection{Volatility forecasting comparison with large $T$}

With the predictors and proxies computed, we are ready to conduct volatility prediction evaluation and comparison.
Now we would like to emphasize one issue that is crucial to the evaluation procedure: the global scale of the predictors. 
Different loss functions may prefer different global scales of volatility forecasts. 
For example, QL penalizes underestimation much more than overestimation, as the predictor is in the denominator in the formula of QL. In other words, QL typically favors relatively high forecast values. To remove the impact of the scales and focus more on the capability of capturing relative variation of volatility, we also compute optimally scaled versions of our predictors and evaluate their empirical loss. Specifically, we first seek the optimal scale by solving for
\[
    \hat\beta := \argmin_{\beta \in \RR} T^{-1}\sum_{t=1}^T L(\hat\sigma_t^2, \beta h_{t})
\]
and then use $\{\hat\beta h_t\}_{t \in [T]}$ for prediction. By comparing the empirical risk of the optimally scaled predictors, we can completely eliminate the discrimination of the loss against different global scales. Some algebra yields that for MSE, the optimal $\hat\beta_{\mse} = \sum_t h_t \hat\sigma_t^2 / \sum_t h_t^2$, and for QL, the optimal $\hat\beta_{\ql} = T^{-1} \sum_t (\hat\sigma_t^2 / h_t)$. Table \ref{tab1} reports the loss of the four predictors and their optimal scaled versions based on all the 691 time points with the non-robust EWMA proxy and the robust proxy Huber\_$720$. Several interesting observations are in order. 


\begin{table}[] 
\caption{Losses of (original or optimally scaled) four predictors with robust and non-robust proxies, evaluated with all $T = 691$ time points.}
\label{tab1}
\begin{tabular}{ccccccc}
\hline
\hline
                     & \multicolumn{6}{c}{MSE}                                                   \\ \cline{2-7} 
                     & \multicolumn{3}{c}{EWMA Proxy}     & \multicolumn{3}{c}{Huber\_720 Proxy} \\ \cline{2-7} 
                     & Orig (1e-6) & Scaled (1e-6) & $\hat\beta_{\mse}$ & Orig (1e-6)  & Scaled (1e-6)  & $\hat\beta_{\mse}$ \\ \hline
EWMA\_HL14           & 4.115       & 3.365         & 0.55 & 3.285        & 2.386          & 0.50 \\
Huber\_HL14          & 3.162       & 3.161         & 1.03 & 2.233        & 2.228          & 0.94 \\
EWMA\_HL7            & 4.824       & 3.395         & 0.46 & 3.930        & 2.364         & 0.44 \\
Huber\_HL7           & 3.112       & 3.110         & 1.05 & 2.134        & 2.133         & 0.98 \\ \hline \hline
\multicolumn{1}{l}{} & \multicolumn{6}{c}{QL}                                                    \\ \cline{2-7} 
\multicolumn{1}{l}{} & \multicolumn{3}{c}{EWMA Proxy}     & \multicolumn{3}{c}{Huber\_720 Proxy} \\ \cline{2-7} 
\multicolumn{1}{l}{} & Orig        & Scaled        & $\hat\beta_{\ql}$ & Orig         & Scaled         & $\hat\beta_{\ql}$ \\ \hline
EWMA\_HL14           & 0.804       & 0.647         & 1.67 & 0.584        & 0.548          & 1.29 \\
Huber\_HL14          & 1.352       & 0.567         & 2.82 & 0.831        & 0.450          & 2.14 \\
EWMA\_HL7            & 1.239       & 0.792         & 2.26 & 0.720        & 0.595          & 1.59 \\
Huber\_HL7           & 2.382       & 0.702         & 4.09 & 1.396        & 0.532          & 2.94 \\ \hline
\hline
\end{tabular}
\end{table}


\begin{itemize}
    \item Using the longer half life of $14$ days gives a smaller QL loss, regardless of whether the predictor is robust or non-robust, original or optimally scaled, and regardless of whether the proxy is robust or non-robust. In terms of MSE, the half-life comparison is mixed: Huber\_HL7 is slightly better than Huber\_HL14, but EWMA\_HL14 is better than EWMA\_HL7. 
    We only focus on the longer half-life from now on. 
    \item If we look at the original predictors without optimal scaling, it is clear that MSE favors the robust predictor and QL favors the non-robust predictor, regardless of using robust or non-robust proxies. This confirms that different loss functions can lead to very different comparison results. 
    
    \item However, the above inconsistency between MSE and QL is mostly due to scaling, which is clearly demonstrated by the column of the optimal scaling $\hat\beta$. 
    For MSE, the optimal scaling of the EWMA predictor is around $0.5$, while that of the Huber predictor is around $1$. In contrast, for QL, the optimal scaling needs to be much larger than $1.0$ and Huber needs a even larger scaling. If we look at the loss function values with optimally scaled predictors, it is interesting to see that the Huber predictor outperforms the EWMA predictor in terms of both MSE (slightly) and QL (substantially). This means that the Huber predictor is more capable of capturing the relative change of time-varying volatility than the non-robust predictor. 
    
    \item Last but not least, when the sample size $T$ is large compared with $n^\dagger_{\eff}$ (here $T/n^\dagger_{\eff} = 691/24.87 = 27.78$), the difference between the EWMA and Huber proxies is small, which explains the reason they give consistent comparison results. When $T$ is not large enough in the next subsection, we can see that the robust proxies gives more sensible conclusions. 
\end{itemize}

\subsubsection{Volatility forecasting comparison with small $T$}

\begin{figure}
     \includegraphics[width=\columnwidth]
        {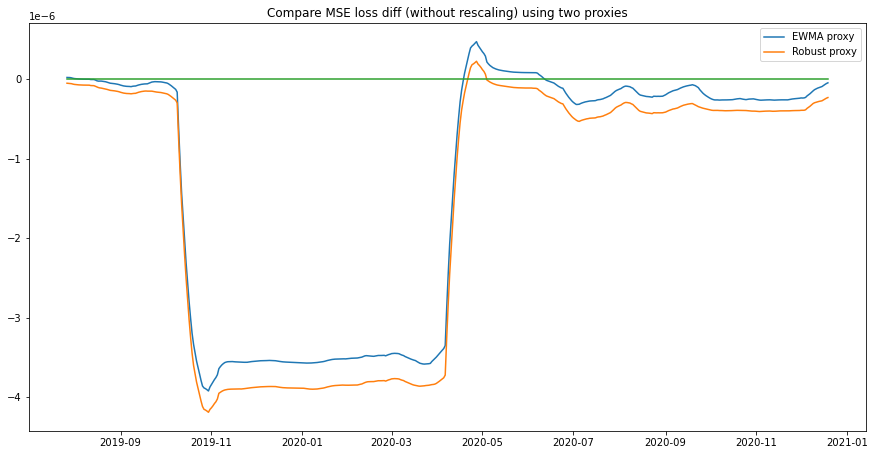}
     \includegraphics[width=\columnwidth]
        {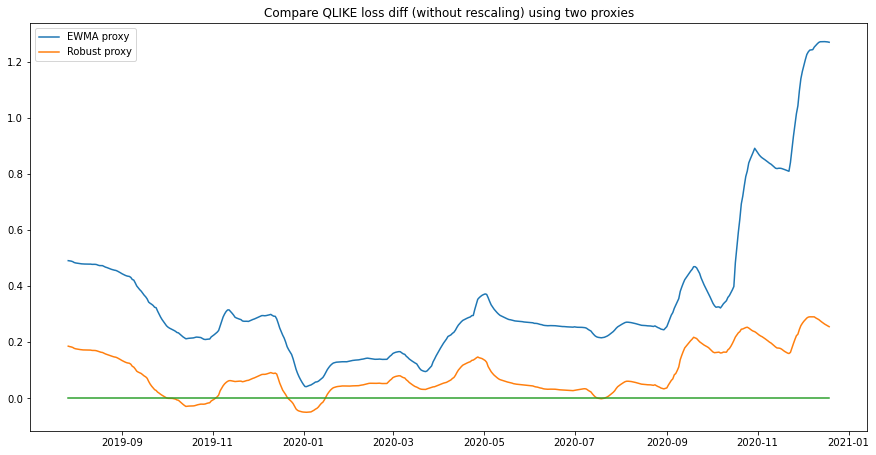}
    \caption{180-day rolling loss difference between EWMA\_HL14 and Huber\_HL14 with robust or non-robust proxies. The upper panel corresponds to MSE and the lower one corresponds to QL.}
    \label{fig5}
\end{figure}  

\begin{figure}
     \includegraphics[width=\columnwidth]
        {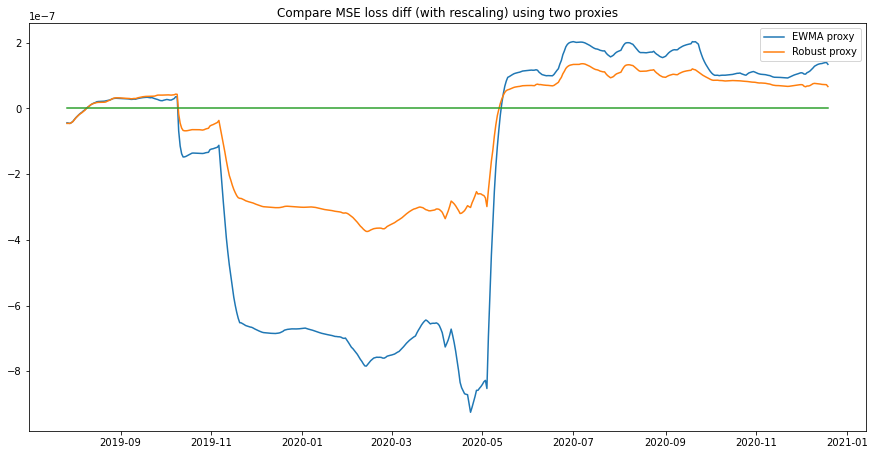}
     \includegraphics[width=\columnwidth]
        {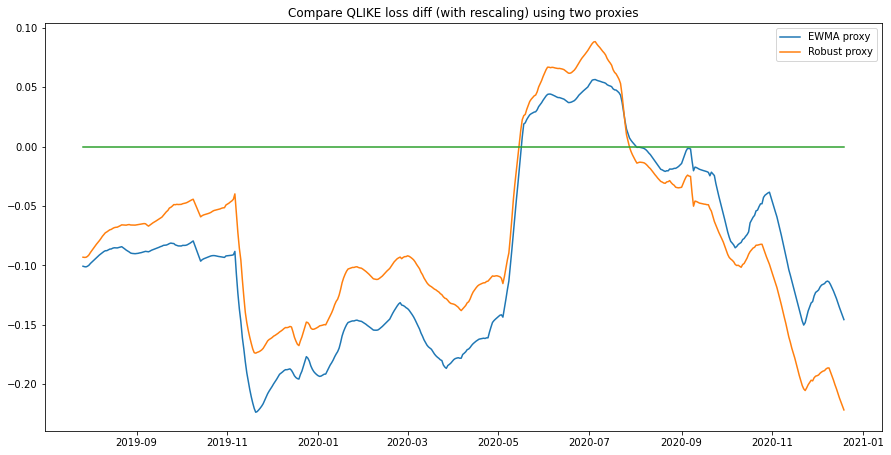}
    \caption{180-day rolling loss difference between optimally scaleda EWMA\_HL14 and Huber\_HL14 with robust or non-robust proxies. The upper panel corresponds to MSE and the lower one corresponds to QL.}
    \label{fig6}
\end{figure}  

\begin{figure}
     \includegraphics[width=\columnwidth]
        {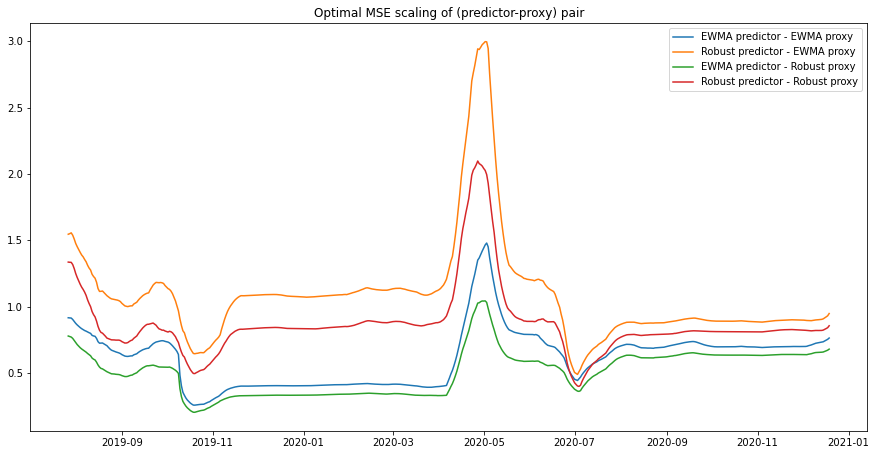}
     \includegraphics[width=\columnwidth]
        {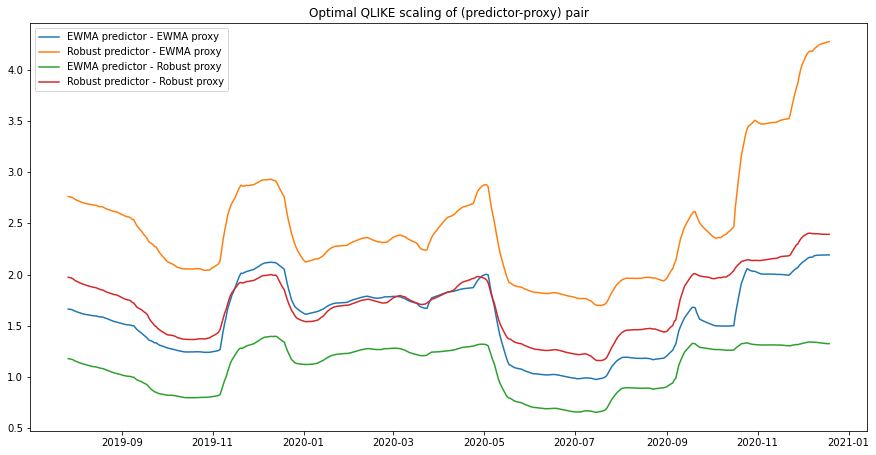}
    \caption{Optimal scaling of robust and non-robust predictors with robust and non-robust proxies. The upper panel is MSE and the lower one is QL.}
    \label{fig7}
\end{figure}  

Now suppose we only have $T=180$ data points to evaluate and compare volatility forecasts. In Fig \ref{fig5}, we present the curve of 180-day rolling loss difference, i.e., $T^{-1}\sum_{s=t-179}^t \allowbreak\{L(\hat\sigma_s^2, h_{\text{Huber\_HL14},s})  - L(\hat\sigma_s^2, h_{\text{EWMA\_HL14},s})\}$ with $t$ ranging from $180$ to $691$, where $\hat\sigma_s^2$ can either be the EWMA proxy or Huber\_$180$. Positive loss difference at $t$ indicates that the EWMA predictor outperforms the Huber predictor in the past 180 days. 
We see that most of the time, Huber\_HL14 defeats EWMA\_HL14 (negative loss difference) in terms of MSE, while EWMA\_HL14 defeats Huber\_HL14 (positive loss difference) in terms of QL. In terms of MSE, robust proxies tend to yield more consistent comparison between the two predictors throughout the entire period of time. We can see from the upper panel of Figure \ref{fig5} that the time period for EWMA\_HL14 to outperform Huber\_HL14 is much shorter with the robust proxies (orange curve) than that with the EWMA proxies (blue curve). 
In terms of QL, if we use the EWMA proxy, we can see from the lower panel of Figure \ref{fig5} that the robust predictor is much worse than the non-robust predictor, especially towards the end of 2020. However, the small MSE difference at the end of 2020 suggests that the EWMA proxy should overestimate the true volatility and exaggerate the performance gap in terms of QL. With the Huber proxy, however, the loss gap between the two predictors is much narrower, suggesting that the Huber proxy is more robust against huge volatility. 

Fig \ref{fig6} presents the curve of 180-day rolling loss difference between optimally scaled \text{Huber\_HL14} and EWMA\_HL14, based on robust and EWMA proxies respectively. For MSE, from the previous subsection, we know that when the optimal scaling is applied, two predictors do not differ much in terms of the overall loss, and the Huber predictor only slightly outperforms the EWMA predictor. In the upper panel of Fig \ref{fig6}, we see that the robust-proxy-based curve is closer to zero than the EWMA-proxy-based curve, displaying more consistency with our result based on large $T$. For QL, the loss differences using the robust or the non-robust proxy look quite similar. We also plot $\hat\beta_{\text{Huber},t}$ and $\hat\beta_{\text{EWMA},t}$ versus time $t$ based on robust and non-robust proxies in Fig \ref{fig7}. For both MSE and QL losses, using the robust proxy leads to more stable optimal scaling values, which are always preferred by practitioners. 

In a nutshell, we have seen that how our proposed robust Huber proxy can lead to better interpretability and sensible comparison of volatility predictors. When total sample size is small compared to the local effective sample size, using a robust proxy is necessary and leads to a smaller probability of misleading forecast evaluation and comparison. When the total sample size is large enough, the proposed robust proxy automatically truncates less and resembles the EWMA proxy. This also provides justification for using a non-robust EWMA proxy when the sample size is large. But we still recommend the proposed robust proxy that can adapt to the sample size and the time-varying volatilities. Sometimes, even if the robust proxy only truncates data for a very small volatile period, in terms of risk evaluation, it could still cause significant difference.

\section{Discussions} \label{sec6}

Compared with the literature of modeling volatility and predicting volatility, evaluating volatility prediction has not been given enough attention. Part of the reason is due to lacking a good framework for its study, which makes practical volatility forecast comparison quite subjective and less systematic in terms of loss selection and proxy selection. \cite{patton2011volatility} is a pioneering work to provide one framework based on long term expectation and provides guidance for loss selection, while our work gives a new framework based on an empirical deviation perspective and further provides guidance on proxy selection. In our framework, we focus on predictors that can achieve desired probability bound for \rnum{2} so that the empirical loss is close to the conditional expected loss. Then the correct comparison of the conditional expected loss of two predictors with large probability rely on a good control for \rnum{1}, which imposes requirements for proxies. 

With this framework, we proposed three robust proxies, each of which guarantees a good bound for \rnum{1} when the data only bears finite fourth moment. Among the three proxies, although they can all obtain the optimal rate of convergence for bounding \rnum{1}, we recommend the exponentially-weighted tuning-free Huber proxy. It is better than the clipped squared returns in that it leverages neighborhood volatility smoothness and it is better than the proxy based on direct truncation in that its improved constant in the deviation bound, depending only on the central moment. To construct this proxy, we need to solve the exponentially-weighted Huber loss, whose truncation level for each sample also needs to change with the weights surprisingly.

We then applied this proxy to the real BTC volatility forecasting comparison and reached some interesting observations. Firstly, robust predictors with better control in variance may use a faster decay to reduce the approximation bias. Secondly, different losses can lead to drastically different comparison, so even restricting to robust losses, loss selection is still a meaningful topic in practice. Thirdly, predictor rescaling according to the loss function is necessary and could further extract the value of robust predictors. Finally, the proposed robust Huber proxy adapts to both the time-varying volatility and the total sample size. When the overall sample size is much larger than the local effective sample size, the robust Huber proxy barely truncate, which provides justification for even using the EWMA proxy for prediction evaluation. However, the robust Huber proxy in theory still gives high probability of concluding the correct comparison. 

There are still limitations of the current work and open questions to be addressed. Assumption \ref{assump1} excludes the situation when $\sigma_t^2$ depends on previous returns such as in GARCH models. 
We require $n^\dagger_{\eff}\to \infty$ for local performance guarantee, leading to a potentially slow decay. However, in practice, it is hard to know how fast the volatility changes. Also, we ignored the auto-correlation of returns and assumed temporal independence of the innovations for simplicity. Extensions of the current framework to time series models and more relaxed assumptions are of practical value to investment managers and financial analysts. Our framework may have nontrivial implications on how to conduct cross-validation with heavy-tailed data, where we use validation data to construct proxies for the unknown variable to be estimated robustly. Obviously, subjectively choosing a truncation level for proxy construction could favor certain truncation level used by a robust predictor. Motivated by our study, rescaling the optimal truncation level for one data splitting according to the total (effective) number of sample splitting sounds an interesting idea worth further investigation in the future.

\bibliographystyle{ims}
\bibliography{RobustVolEstAndEval}

\newpage	
\appendix
\section{Proofs}
	
This section provides proof details for all the theorems in the main text.
	
\begin{proof}[{\bf Proof of Theorem \ref{thm1}}]
The proof follows Theorem 5 of \cite{FLW16}. Denote $\phi(x) = \min(|x|, 1) \sgn(x)$, we have 
$$
-\log(1-x+x^2) \le \phi(x) \le \log(1+x+x^2)\,.
$$
Define $r(\theta) = \sum_{s=t}^{t+m} \min(w_{s,t}|X_s^2 - \theta|, \tau_t) \sgn(X_s^2 - \theta)$, so $\hat\sigma_t^2$ is the solution to $r(\theta) = 0$. 
\begin{equation*}
\begin{aligned}
\EE\{\exp[r(\theta) / \tau_t]\} & \le \prod_{s=t}^{t+m} \EE\{\exp[\phi(w_{s,t}(X_s^2 - \theta) / \tau_t)]\} \\
& \le \prod_{s=t}^{t+m} \EE\{1 + w_{s,t}(X_s^2 - \theta) / \tau_t + w_{s,t}^2(X_s^2 - \theta)^2 / \tau_t^2\} \\
& \le \prod_{s=t}^{t+m} \{1 + w_{s,t}(\sigma_s^2 - \theta) / \tau_t + w_{s,t}^2 (\kappa_s + (\sigma_s^2 - \theta)^2) / \tau_t^2\} \\
& \le \exp\bigg\{\sum_{s=t}^{t+m} \{w_{s,t}(\sigma_s^2 - \theta) / \tau_t + w_{s,t}^2 (\kappa_s + (\sigma_s^2 - \theta)^2) / \tau_t^2\} \bigg\}
\end{aligned}
\end{equation*}
Define $\bar{\sigma_t^2} = \sum_{s=t}^{t+m} w_{s,t} \sigma_s^2 = \sigma_t^2 + \delta_{0,t}$. The RHS can be further bounded by 
\begin{equation*}
\begin{aligned}
\EE\{\exp[r(\theta) / \tau_t]\} & \le \exp\bigg\{(\bar{\sigma_t^2} - \theta) / \tau_t + \sum_{s=t}^{t+m} w_{s,t}^2 (\kappa ^ {\dagger}_t +  2(\sigma_s^2 - {\sigma_t^2})^2 + 2({\sigma_t^2} - \theta)^2) / \tau_t^2\bigg\}  \\
& = \exp\bigg\{(\sigma_t^2 - \theta) / \tau_t + \sum_{s=t}^{t+m} w_{s,t}^2 (\kappa ^ {\dagger}_t + 2(\sigma_t^2 - \theta)^2) / \tau_t^2 + \delta_{0,t}/\tau_t + 2\delta_{1,t} /\tau_t^2 \bigg\}  \\
& = \exp\bigg\{(\sigma_t^2 - \theta) / \tau_t + (n^\dagger_{\eff})^{-1} (\kappa ^ {\dagger}_t + 2(\sigma_t^2 - \theta)^2) / \tau_t^2 + \delta_{0,t}/\tau_t + 2\delta_{1,t} /\tau_t^2 \bigg\}  
\end{aligned}
\end{equation*}
Similarly, we can prove that $\EE\{\exp[-r(\theta) / \tau_t]\} \le \exp\{-(\sigma_t^2 - \theta) / \tau_t + (n^\dagger_{\eff})^{-1} (\kappa ^ {\dagger}_t + 2(\sigma_t^2 - \theta)^2) / \tau_t^2 - \delta_{0,t}/\tau_t + 2\delta_{1,t} /\tau_t^2 \}  $. Define
$$
B_{+}(\theta) = (\sigma_t^2 - \theta) + (\kappa ^ {\dagger}_t + 2(\sigma_t^2 - \theta)^2) / (n^\dagger_{\eff} \tau_t) + \tau_t z
$$
$$
B_{-}(\theta) = (\sigma_t^2 - \theta) - (\kappa ^ {\dagger}_t + 2(\sigma_t^2 - \theta)^2) / (n^\dagger_{\eff} \tau_t)  - \tau_t z
$$
By Chebyshev inequality, 
$$
\PP(r(\theta) > B_{+}(\theta)) \le \exp\{-B_{+}(\theta) / \tau_t\} \EE\exp\{r(\theta) / \tau_t\} = \exp\{-z + |\delta_{0,t}|/\tau_t + 2\delta_{1,t}/\tau_t^2\}
$$
Similarly, $\PP(r(\theta) < B_{-}(\theta)) \le \exp\{-z+ |\delta_{0,t}|/\tau_t + 2\delta_{1,t}/\tau_t^2\}$. Following the same argument with \cite{FLW16}, we can show that for large enough $n^\dagger_{\eff}$ such that $8/(n^\dagger_{\eff} \tau_t) (\kappa ^ {\dagger}_t / (n^\dagger_{\eff} \tau_t) +  \tau_t z) \le 1$, the root $\theta_{+}$ of $B_{+}(\theta)$ satisfies that 
$$
\theta_{+} \le \sigma_t^2 + 2(\kappa ^ {\dagger}_t / (n^\dagger_{\eff} \tau_t) +  \tau_t z)\,,
$$
and the root $\theta_{-}$ of $B_{-}(\theta)$ satisfies that 
$$
\theta_{-} \ge \sigma_t^2 - 2(\kappa ^ {\dagger}_t / (n^\dagger_{\eff} \tau_t) +  \tau_t z)
$$
With the choice of $\tau_t$ given in Theorem \ref{thm1}, we have $\PP(|\hat\sigma_t^2 - \sigma_t^2| \le 4\sqrt{\kappa ^ {\dagger}_t  z/n^\dagger_{\eff}}) \ge 1-2e^{-z+|\delta_{0,t}|/\tau_t + 2\delta_{1,t}/\tau_t^2}$. And the requirement for the effective sample size is that $n^\dagger_{\eff} \ge 16z$.

\end{proof}

\begin{proof}[{\bf Proof of Theorem \ref{thm2}}]
We extend Theorem 2.1 of \cite{wang2020new} to the weighted case. Note again we are solving the following equation for $\hat\tau_t$:
$$
\sum_{s=t}^{t+m}  \frac{\min\bigg(w_{s,t}^2 |X_s^2 - \sigma_t^2|^2, \tau_t^2\bigg)}{\tau_t^2} - z = 0\,.
$$
Also defined $\tau_t$ as the solution of the corresponding population equation:
$$
\sum_{s=t}^{t+m}  \frac{\EE\bigg[ \min\bigg(w_{s,t}^2 |X_s^2 - \sigma_t^2|^2, \tau_t^2\bigg) \bigg]}{\tau_t^2} - z = 0\,.
$$
We will first show that (a) $\tau_t \asymp \sqrt{\frac{\kappa ^ {\dagger}_t}{n^\dagger_{\eff}z}}$ and then (b) with probability approaching $1$, $|\hat\tau_t / \tau_t - 1| \le c_0$ for a small fixed $c_0$. To prove (a), it is straightforward to see that 
$$
\tau_t^2 z \le \sum_{s=t}^{t+m} w_{s,t}^2 \EE|X_s^2 -\sigma_t^2|^2 = \sum_{s=t}^{t+m} w_{s,t}^2 (\kappa_s + (\sigma_s^2 -\sigma_t^2)^2) = \frac{\kappa ^ {\dagger}_t + \delta_{1,t} n^\dagger_{\eff}}{n^\dagger_{\eff}} \le \frac{(1+c_1)\kappa ^ {\dagger}_t}{n^\dagger_{\eff}}
$$
Furthermore, 
\begin{equation*}
\begin{aligned}
\tau_t^2 z &= \sum_{s=t}^{t+m}   \EE\bigg[ (w_{s,t}^2 |X_s^2 - \sigma_t^2|^2) \wedge \tau_t^2 \bigg] \ge  \sum_{s=t}^{t+m} \tau_t^2  \PP\bigg( w_{s,t}^2 |X_s^2 - \sigma_t^2|^2 >\tau_t^2 \bigg) \,.
\end{aligned}
\end{equation*}
Therefore, $z \ge \sum_{s=t}^{t+m} \PP( w_{s,t}^2 |X_s^2 - \sigma_t^2|^2 >\tau_t^2)$. Consider the solution $q_{s}(a)$ to the equation $\PP(|X_s^2 - \sigma_t^2|^2 > q a) = a^{-1}$ of the variable $q$. Note that the solution is unique. Since all $w_{s,t}$ are on the same order, we know that $w_{s,t} \asymp 1/m$, $n^\dagger_{\eff} \asymp m$.
Let $a = a_s = (\sum_{s=t}^{t+m} w_{s,t}^2) / (z w_{s,t}^2 ) \asymp m/z$, the corresponding solution is $q_s(a_s)$. Define $q_{\min} = \min_s q_s(a_s)$. So we have 
$$
\PP\bigg[ w_{s,t}^2 |X_s^2 - \sigma_t^2|^2 > \frac{q_{\min}}{z} \bigg(\sum_{s=t}^{t+m} w_{s,t}^2\bigg) \bigg] \ge \PP\bigg[ |X_s^2 - \sigma_t^2|^2 > q_s(a_s) \bigg(\sum_{s=t}^{t+m} w_{s,t}^2\bigg) /  (z w_{s,t}^2) \bigg] = \frac{z w_{s,t}^2}{\sum_{s=t}^{t+m} w_{s,t}^2}\,.
$$
Let $\tau_0^2 = \frac{q_{\min}}{z} (\sum_{s=t}^{t+m} w_{s,t}^2) = \frac{q_{\min}}{n^\dagger_{\eff} z}$, so we have showed that $\sum_{s=t}^{t+m} \PP( w_{s,t}^2 |X_s^2 - \sigma_t^2|^2 >\tau_0^2) \ge z$. Therefore, $\tau_t \ge \tau_0$. 
From $\PP(|X_s^2 - \sigma_t^2|^2 > q a) = a^{-1}$, we know that for any $s$, $q_s(a_s) \asymp a^{-1} \asymp z/m$, so is $q_{\min}$. Therefore, we have $\tau_t / w_{s,t} \ge \tau_0 / w_{s,t} \asymp m \tau_0 \asymp \sqrt{q_{\min} m/z} \asymp 1$. Write $\tau_t / w_{s,t} \ge c_2$ for some $c_2 \ge 0$.
$$
\tau_t^2 z = \sum_{s=t}^{t+m}   \EE\bigg[ (w_{s,t}^2 |X_s^2 - \sigma_t^2|^2) \wedge \tau_t^2 \bigg] \ge \sum_{s=t}^{t+m} w_{s,t}^2 \EE\bigg[ (|X_s^2 - \sigma_t^2|^2) \wedge c_2^2 \bigg] \asymp \frac{\kappa ^ {\dagger}_t}{n^\dagger_{\eff}}\,.
$$
So we have shown (a) holds, that is, $\tau_t \asymp \sqrt{\frac{\kappa ^ {\dagger}_t}{n^\dagger_{\eff}z}}$.

Next, we need to show (b), so that the solution $\hat\tau_t$ from the second equation gives us the desired optimal truncation rate. To this end, we still follow the proof of Theorem 1 of \cite{wang2020new} closely. Specifically, define $Y_s = w_{s,t} |X_s^2 - \sigma_t^2|$, using their notations, we define
$$
p_n(t) = \sum_s \frac{Y_s^2 I(Y_s \le t)}{t^2}\,, \quad q_n(t) = \sum_s \frac{Y_s^2 \wedge t^2}{t^2} \,, 
$$
and their population versions
$$
p(t) = \sum_s \frac{E[Y_s^2 I(Y_s \le t)]}{t^2}\,, \quad q(t) = \sum_s \frac{E[Y_s^2 \wedge t^2]}{t^2} \,.
$$
One important fact here is that $q_n'(t) = -2t^{-1} p_n(t)$ and $q'(t) = -2t^{-1} p(t)$, which is key to prove Theorem 1 of \cite{wang2020new}. The only difference of our setting here is that we do not assume $Y_s$'s are identically distributed. So when applying Bernstein's inequality as in (S1.8) of \cite{wang2020new}, we need to use the version for non-identically distributed variables, and also bound the sum of individual variances. Specifically, define $\zeta_s = \frac{Y_s^2 \wedge \tau_t^2}{\tau_t^2}$, we have $0\le \zeta_s\le \min\{1, (Y_s\wedge\tau_t)/\tau_t\}$, and hence $\sum_{s} \EE[\zeta_s^2] \le \sum_{s} \EE\Big[\frac{Y_s^2 \wedge \tau_t^2}{\tau_t^2}\Big] = q(\tau_t) = z$. So we can indeed apply Bernstein's inequality on $\sum_s \zeta_s$. For more details, we refer the interested readers to \cite{wang2020new}. 
\end{proof}

\begin{proof}[{\bf Proof of Theorem \ref{thm3}}] Let $\hat\sigma_t = (\hat\sigma_c)_t$.
\begin{equation*}
\begin{aligned}
\PP\bigg[ &\sum_{t=1}^T(\hat\sigma_t^2 / \sigma_t^2 - 1)q_t \ge y \bigg] = \PP\bigg[\sum_{t=1}^T \frac{q_t}{\sigma_t^2} (X_t^2 \wedge c_t - \EE[X_t^2 \wedge c_t]) + \sum_{t=1}^T \bigg(\frac{\EE[X_t^2 \wedge c_t]}{\sigma_t^2}  - 1\bigg)q_t  \ge y \bigg] \\
& \le \PP\bigg[\sum_{t=1}^T \frac{q_t}{\sigma_t^2} (X_t^2 \wedge c_t - E[X_t^2 \wedge c_t])  \ge y/2 \bigg] + \PP\bigg[ \sum_{t=1}^T \bigg(\frac{E[X_t^2 \wedge c_t]}{\sigma_t^2}  - 1\bigg)q_t  \ge y/2 \bigg]
\end{aligned}
\end{equation*}
To bound the first term, we can apply Bernstein inequality for $\sum_t Y_t$ where $Y_t = \frac{q_t}{\sigma_t^2} (X_t^2 \wedge c_t - E[X_t^2 \wedge c_t])$. Note that $E[Y_t^2] \le \tilde\kappa_t q_t^2 / \sigma_t^4 \le  \tilde\kappa_t Q^2/ \sigma_t^4$ and $|Y_t| \le 2c_t Q / \sigma_t^2$, so we can choose $y = C Q\sqrt{zT}$ to make the first term bounded by $e^{-z}$. 

To bound the second term, note that 
$$
\EE[X_t^2 \wedge c_t] - \sigma_t^2 = \EE[X_t^2 I(X_t^2 > c_t)] + \EE[c_t I(X_t^2 > c_t)] \le  \EE\bigg[X_t^2 \cdot \frac{X_t^2}{c_t}\bigg] + c_t \frac{\EE[X_t^4]}{c_t^2} = \tilde\kappa_t \sqrt{\frac{z}{{\tilde\kappa^{\dagger}_t} T}}.
$$
Here we can also choose $y = CQ\sqrt{zT}$ for a large enough $C$ to make the second probability equal to $0$.
\end{proof}

\begin{lem}\label{lem1}
Let $\{Y_t\}$ be a process such that $Y_t = h_t(X_t, X_{t-1}, \dots)$. Define
$$
\gamma_j := \max_t \|h_t(X_t, X_{t-1}, \dots) - h_t(X_t, \dots, X_{t-j+1}, {X_{t-j}}', X_{t-j-1}, \dots)\|_2
$$
for any $j \ge 0$ where ${X_{t-j}}'$ is an iid copy of $X_{t-j}$ and $\{X_t\}$ are independent random innovations satisfying Assumption \ref{assump1}. Assume $\EE[Y_t] = 0, |Y_t| \le M$ for all $t$ and there exists constant $\rho \in (0,1)$ such that
$$
\|Y_{\cdot}\|_2 := \sup_{k\ge 0} \rho^{-k} \sum_{j=k}^{\infty} \gamma_j < \infty\,.
$$
Also assume $T \ge 4 \vee (\log(\rho^{-1})/2)$. We have for $y > 0$, 
$$
\PP\bigg(\sum_{t=1}^T Y_t \ge y \bigg) \le \exp\bigg\{ -\frac{y^2}{4C_1 (T\|Y_{\cdot}\|_2^2 + M^2) + 2C_2 M (\log T)^2 y} \bigg\}\,,
$$
where $C_1 = 2 \max\{ (e^4-5)/4, [\rho(1-\rho) \log(\rho^{-1})]^{-1}\} \cdot (8 \vee \log(\rho^{-1}))^2$, $C_2 = \max\{(c \log 2)^{-1}, [1\vee(\log(\rho^{-1})/8)]\}$ with $c = [\log(\rho^{-1})/8] \wedge \sqrt{(\log 2) \log(\rho^{-1})/4}$.
\end{lem}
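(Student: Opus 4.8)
The plan is to deduce this dependent Bernstein bound from the classical independent‑summand Bernstein inequality by first replacing $\{Y_t\}$ with a bounded, finitely‑dependent approximation and then by an alternating‑block decomposition, with the approximation order $m$ and the block length $\ell$ both calibrated to $\log T$. Write $\mathcal F_k := \sigma(X_k,X_{k-1},\dots)$ and let $P_k\cdot := \EE[\cdot\mid\mathcal F_k]-\EE[\cdot\mid\mathcal F_{k-1}]$ be the associated martingale‑difference projections. Two elementary facts drive everything: by Jensen's inequality and the coupling definition of $\gamma_j$ one has $\|P_{t-j}Y_t\|_2\le\gamma_j$; and by the definition of $\|Y_\cdot\|_2$ one has $\sum_{j\ge k}\gamma_j\le\rho^k\|Y_\cdot\|_2$ for every $k$, in particular $\sum_{j\ge 0}\gamma_j\le\|Y_\cdot\|_2$. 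The first step is to fix $m\asymp\log T/\log(\rho^{-1})$ and set $\tilde Y_t:=\EE[Y_t\mid X_t,\dots,X_{t-m}]$, which is again centered, bounded by $M$, and $m$‑dependent, while the remainder $R_t:=Y_t-\tilde Y_t$ has $\|P_{t-j}R_t\|_2\le\gamma_j$ for $j>m$ and $\|P_{t-j}R_t\|_2=0$ for $j\le m$; the geometric decay $\sum_{j>m}\gamma_j\le\rho^{m+1}\|Y_\cdot\|_2$ together with the choice of $m$ makes $\sum_t R_t$ contribute negligibly to the tail (either by a moment bound for dependent sums sharper than Chebyshev, or by never isolating $\sum_tR_t$ and instead re‑absorbing this error inside the block construction below).

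Next I would cut $[T]$ into consecutive blocks of length $\ell\asymp\log T$ (legitimate because the hypothesis $T\ge 4\vee(\log(\rho^{-1})/2)$ forces $\ell\ge1$ and lets the geometric sums be summed cleanly) and split the block sums into the odd‑indexed and even‑indexed families; since $\tilde Y_t$ is $m$‑dependent and $\ell$ can be taken $\ge m$, the odd‑block sums are mutually independent, and likewise the even ones. To each family I apply Bernstein's inequality for independent bounded variables. The sum of the block‑sum variances is $\lesssim T\|Y_\cdot\|_2^2$, because within a block $\Var(\sum_{t\in B}\tilde Y_t)\le|B|\sup_t\sum_{t'}|\Cov(\tilde Y_t,\tilde Y_{t'})|$ and the covariance series is summable with sum $\lesssim\|Y_\cdot\|_2^2$ (again via $\|P_{t-j}\tilde Y_t\|_2\le\gamma_j$ and Cauchy–Schwarz, using $\sum_j\gamma_j\le\|Y_\cdot\|_2$ at full strength), while each block sum is bounded in absolute value by $M\ell\lesssim M\log T$. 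This yields for each family a bound of the form $\exp\{-cy^2/(T\|Y_\cdot\|_2^2+M\log T\cdot y)\}$; a union bound over the two families and over the $O(\log T)$ auxiliary levels that must be reconciled (plus the slack term $M^2$ in the Gaussian part, which covers the small‑$T$ regime and the boundary/approximation contributions, and is consistent with the extreme range $y\gtrsim TM$ where $\sum_tY_t$ is deterministically bounded) produces the stated form, the second power of $\log T$ arising because $\ell\asymp\log T$ enters both as the bound on each block sum and through the number of blocks/levels; the appearance of $\log 2$ in $C_1,C_2$ simply reflects organizing the level‑splitting dyadically.

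The main obstacle is the dependence itself. The coupling/approximation remainder is only small in $L^2$, so a naive Chebyshev bound on it gives polynomial‑in‑$T$ tail control, which is insufficient for an exponential bound uniform in $y$; the remainder must therefore be handled through a Rosenthal/Nagaev‑type moment inequality under functional dependence measures, or — cleaner — carried recursively through the block construction so its error is re‑absorbed at each scale. The second delicate point is the bookkeeping that makes the Gaussian proxy exactly the long‑run quantity $T\|Y_\cdot\|_2^2$ rather than the cruder $T\ell\,\|Y_\cdot\|_2^2=T\log T\,\|Y_\cdot\|_2^2$: avoiding that extra $\log T$ is precisely where the full summability $\sum_j\gamma_j\le\|Y_\cdot\|_2$ (not merely $\sum_{j<m}\gamma_j$) must be used. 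Everything else is the routine application of the independent‑case Bernstein inequality and the summation of geometric series.
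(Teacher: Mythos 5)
Your strategy is essentially the paper's own: the paper does not prove this lemma from scratch but defers entirely to Theorem 2.1 of the cited reference (Zhang, 2021), whose proof is precisely the $m$-dependent conditional-expectation approximation combined with alternating-block Bernstein bounds at $O(\log T)$ scales that you sketch, with the martingale-projection bound $\|P_{t-j}Y_t\|_2 \le \gamma_j$ doing the same work. You also correctly identify the only genuine modification needed here --- replacing stationarity by a $\max_t$ in the definition of $\gamma_j$ so that $\sum_{j\ge k}\gamma_j \le \rho^{k}\|Y_{\cdot}\|_2$ holds uniformly over $t$ --- which is exactly the point the paper makes when it asserts that the cited proof ``goes through with the new definition'' and omits the details.
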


The proof of Lemma \ref{lem1} follows closely with Theorem 2.1 of \cite{zhang2021robust}. The only extension here is that we do not require $X_t$ or even $X_t/\sigma_t$ to be identically distributed, so there is no assumption on the stationarity of the process $Y_t$. However, we requires a stronger assumption on the maximal perturbation of each $Y_t= h_t(X_t, X_{t-1}, \dots)$. The entire proof of \cite{zhang2021robust} will go through with this new definition of $\|Y_{\cdot}\|_2$ and $\gamma_j$. We omit the details of the proof.

\begin{proof}[{\bf Proof of Theorem \ref{thm4}}]
Let $\hat\sigma_t = (\hat\sigma_e)_t$ and define $Y_t = \frac{q_t}{\sigma_t^2} (\hat\sigma_t^2 - \EE[\hat\sigma_t^2])$. It is not hard to see that for $j \le m$,
\begin{equation*}
\begin{aligned}
\gamma_j^2 & = \max_t \EE\bigg\{[(w_{j,0} X_{t+j}^2) \wedge c_t - (w_{j,0} {{X_{t+j}}'}^2) \wedge c_t]^2 \bigg\}  \frac{q_t^2}{\sigma_t^4} \\
& \le \max_t \EE\bigg\{(w_{j,0} X_{t+j}^2 - w_{j,0} {{X_{t+j}}'}^2)^2 + (w_{j,0} X_{t+j}^2 - c_t)^2 I(w_{j,0} X_{t+j}^2 < c_t, w_{j,0} {{X_{t+j}}'}^2 > c_t) \\
& \quad\quad + (w_{j,0} {{X_{t+j}}'}^2  - c_t)^2 I(w_{j,0} X_{t+j}^2 > c_t, w_{j,0} {{X_{t+j}}'}^2 < c_t) \bigg\} \frac{Q^2}{\sigma_t^4}\\
& \le 4 w_{j,0}^2 Q^2  \max_t \tilde \kappa_{t+j} / \sigma_t^4  \le 4 w_{j,0}^2 Q^2  \max_{t,u\le m} \tilde \kappa_{t+u} / \sigma_t^4
\end{aligned}
\end{equation*}
And for $j > m$, $\gamma_j = 0$. Therefore, 
$$
\|Y_{\cdot}\|_2 = \sup_k \rho^{-k} \sum_{j=k}^{\infty} \gamma_j \le 2 Q \sqrt{\max_{t,u\le m} \tilde \kappa_{t+u} / \sigma_t^4}  \sup_k \rho^{-k} \sum_{j=k}^{m} w_{j,0} \le 2 Q \sqrt{\max_{t,u\le m} \tilde \kappa_{t+u} / \sigma_t^4} < \infty\,,
$$
for any fixed $\rho \in (0,1)$. 

In addition, we claim $|Y_t| \le CQ\sqrt{z}$ with high probability. To prove this, we need the following result: when $n^\dagger_{\eff} \ge 16 \tilde z$ and $(n^\dagger_{\eff} c_t)^2 \ge 16{\tilde\kappa^{\dagger}_t}$, 
$$
\PP(|\hat\sigma_t^2 - \sigma_t^2| \le 2({\tilde\kappa^{\dagger}_t} / (n^\dagger_{\eff} c_t) +  c_t \tilde z) \ge 1-2e^{-\tilde z+|\delta_{0,t}|/c_t + 2\delta_{1,t}/c_t^2}\,.
$$
This can be shown following a similar proof as Theorem \ref{thm1}, thus we omit the details.
Note that here $c_t$ is not chosen to optimize the error bound, as we have another average over $T$ to take care of the extra variance in $\hat\sigma_t^2$. So here we only need to choose $c_t$ to make sure the error bound to be in the order of $\sqrt{z}$. $c_t =\sqrt{\frac{{\tilde\kappa^{\dagger}_t}T}{n^{\dagger 2}_{\eff} z}}$ and pick $\tilde z = c z n^\dagger_{\eff} / \sqrt{T}$ can indeed do the job, since the exception probability is $2e^{-\frac{c}{2} n^\dagger_{\eff} z / \sqrt{T}}$ under the assumption that $\max_t |\delta_{0,t}|/\sqrt{{\tilde\kappa^{\dagger}_t}} + 2\delta_{1,t} n^\dagger_{\eff}/({\tilde\kappa^{\dagger}_t}\sqrt{T}) \le c /2$. We require  $|Y_t| \le CQ\sqrt{z}$ to hold for all time points, so the exception probability for all events is bounded by $2 T e^{-\frac{c}{2} n^\dagger_{\eff} z / \sqrt{T}}$. When $n^\dagger_{\eff} > 2 c^{-1} (1 + \log 2T/z) \sqrt{T}$, this is further bounded by $e^{-z}$. 
Finally $n^\dagger_{\eff} \ge 16 \tilde z$ and $(n^\dagger_{\eff} c_t)^2 \ge 16{\tilde\kappa^{\dagger}_t}$ lead to the requirements of $\sqrt{T} \ge 16 c z$ and $T \ge 16z$.
Now conditioning on that $|Y_t| \le CQ\sqrt{z}$, we are ready to call Lemma \ref{lem1} for $Y_t$. We can choose $y = C Q\sqrt{zT}$ in Lemma \ref{lem1} to make the exception probability smaller than $e^{-z}$. So in total the exception probability is $2e^{-z}$.

Next, in terms of the bias term $\EE[\hat\sigma_t^2] - \sigma_t^2 = \sum_{s=t}^{t+m} w_{s,t} \EE[ \min( X_s^2, c_t/w_{s,t}) - \sigma_s^2] +  (\sum_{s=t}^{t+m} w_{s,t} \sigma_s^2 - \sigma_t^2)$. From the proof of Theorem \ref{thm3}, we know that $\EE[ \min( X_s^2, c_t/w_{s,t}) - \sigma_s^2] \le 2 w_{s,t} \tilde\kappa_s / c_t$. Therefore
\begin{equation*}
\begin{aligned}
\frac{\EE[\hat\sigma_t^2]}{\sigma_t^2} - 1 \le \frac{2}{\sigma_t^2} \sum_{s=t}^{t+m} w_{s,t}^2 \tilde\kappa_s / c_t + \frac{\delta_{0,t}}{\sigma_t^2} = 2 \sqrt{\frac{{\tilde\kappa^{\dagger}_t} z}{\sigma_t^4 T}} + \frac{\delta_{0,t}}{\sigma_t^2}.
\end{aligned}
\end{equation*}
Thus, the bias term will not affect the total error bound as shown in the theorem. 
\end{proof}

\begin{lem} \label{lem2}
Assume the weighted Huber loss $F(\theta) = \cL_{c_t}(\theta; \{w_{s, t}\}_{s = t} ^ {t + m})$ is $\alpha$-strongly convex for some $\alpha \in (0,1/2)$ in some local neighborhood $\Theta$ around $\theta^* \in \Theta$. $\tilde F(\theta)$ is the perturbed version of $F(\theta)$. If $\theta^* = \argmin_{\theta} F(\theta)$ and $\tilde\theta^* = \argmin_{\theta} \tilde F(\theta)$ and $\tilde\theta^* \in \Theta$, we have 
$$
|\tilde\theta - \theta| \le \frac{\alpha+1}{\alpha} \sup_\theta | \tilde F'(\theta) - F'(\theta)|\,.
$$
\end{lem}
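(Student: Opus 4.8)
The plan is to combine the first-order optimality conditions at $\theta^*$ and $\tilde\theta^*$ with a two-sided bound on the second derivative of $F$ over $\Theta$. First I would record two elementary facts about $F(\theta) = \cL_{c_t}(\theta;\{w_{s,t}\}_{s=t}^{t+m})$. On the one hand, $F'(\theta) = -\sum_{s=t}^{t+m} w_{s,t}\,\ell_{c_t/w_{s,t}}'(X_s^2 - \theta)$ with $\ell_\tau'(x) = \sgn(x)\min(|x|,\tau)$; since each summand is $w_{s,t}$-Lipschitz in $\theta$ and $\sum_{s=t}^{t+m} w_{s,t} = 1$, the map $F'$ is nondecreasing and $1$-Lipschitz on all of $\RR$ (equivalently $0 \le F'' \le 1$ wherever $F$ is twice differentiable). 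On the other hand, the hypothesis that $F$ is $\alpha$-strongly convex on $\Theta$ says $F'$ is $\alpha$-strongly monotone there. Finally, since $\theta^*$ and $\tilde\theta^*$ are interior global minimizers of the differentiable functions $F$ and $\tilde F$ respectively, first-order optimality gives $F'(\theta^*) = 0$ and $\tilde F'(\tilde\theta^*) = 0$, so that $F'(\tilde\theta^*) = F'(\tilde\theta^*) - \tilde F'(\tilde\theta^*)$ and therefore $|F'(\tilde\theta^*)| \le \sup_\theta|\tilde F'(\theta) - F'(\theta)| =: \varepsilon$.

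The main step is then a single application of strong monotonicity. Since $\theta^*,\tilde\theta^* \in \Theta$ and $\Theta$ is a neighborhood (interval) of $\theta^*$, the whole segment between $\theta^*$ and $\tilde\theta^*$ lies in $\Theta$, so
\[
   \alpha\,|\tilde\theta^* - \theta^*|^2 \;\le\; \bigl(F'(\tilde\theta^*) - F'(\theta^*)\bigr)(\tilde\theta^* - \theta^*) \;=\; F'(\tilde\theta^*)(\tilde\theta^* - \theta^*) \;\le\; |F'(\tilde\theta^*)|\,|\tilde\theta^* - \theta^*| \;\le\; \varepsilon\,|\tilde\theta^* - \theta^*|,
\]
using $F'(\theta^*) = 0$. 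Cancelling a factor $|\tilde\theta^* - \theta^*|$ (the bound is trivial if the two points coincide) yields $|\tilde\theta^* - \theta^*| \le \varepsilon/\alpha \le \frac{\alpha+1}{\alpha}\,\varepsilon$ since $\alpha \in (0,1/2)$, which is the claimed bound. If one prefers to land exactly on the constant $\frac{\alpha+1}{\alpha}$, the same computation with the standard co-coercivity inequality $(F'(\theta_1) - F'(\theta_2))(\theta_1 - \theta_2) \ge \frac{\alpha}{\alpha+1}|\theta_1-\theta_2|^2$ for an $\alpha$-strongly-convex, $1$-smooth function — the ``$1$'' being precisely the Lipschitz constant $\sum_s w_{s,t}$ of $F'$ — produces it directly.

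The part that needs genuine care is the localization: strong convexity, hence strong monotonicity of $F'$, is assumed only on $\Theta$, so one must make sure that $\theta^*$, $\tilde\theta^*$ and the segment joining them never leave $\Theta$ — this is exactly what the hypotheses $\theta^* \in \Theta$ and $\tilde\theta^* \in \Theta$ provide, given that $\Theta$ is a neighborhood of $\theta^*$. Without the a priori knowledge that $\tilde\theta^* \in \Theta$ one would first need a separate ``no-escape'' argument (continuity plus the local curvature of $F$) to rule out $\tilde\theta^*$ lying far away, but here that is assumed. Everything else — the $1$-Lipschitz and $\alpha$-strong-monotonicity bounds on $F'$, the first-order conditions, and the final one-line chain of inequalities — is routine.
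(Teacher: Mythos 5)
Your proof is correct, and it takes a slightly different --- in fact more elementary --- route than the paper. The paper invokes the co-coercivity inequality for functions that are simultaneously $\alpha$-strongly convex and $1$-smooth (Lemma 3.11 of Bubeck), combines it with an AM--GM split of the cross term, and lands exactly on the constant $\frac{\alpha+1}{\alpha}$; this is why the paper bothers to verify $1$-smoothness of $F$ at all. You instead use only one-sided $\alpha$-strong monotonicity of $F'$ on $\Theta$ together with the two first-order conditions $F'(\theta^*)=0$ and $\tilde F'(\tilde\theta^*)=0$, which yields the sharper bound $|\tilde\theta^*-\theta^*|\le \alpha^{-1}\sup_\theta|\tilde F'(\theta)-F'(\theta)|$ and hence the stated one since $\alpha^{-1}\le\frac{\alpha+1}{\alpha}$; smoothness of $F$ is never needed on this route (your observation that the inequality $\alpha^{-1}\le\frac{\alpha+1}{\alpha}$ holds for every $\alpha>0$, not just $\alpha\in(0,1/2)$, is right). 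Your identification of the real point of care --- that strong monotonicity is only available on $\Theta$, so one needs $\theta^*,\tilde\theta^*$ and the segment between them to stay in $\Theta$, which is exactly what the hypotheses supply --- matches the role these assumptions play in the paper, where the containment $\tilde\theta^*\in\Theta$ is established separately (in the proof of Theorem 5) before the lemma is applied. The only thing your simpler argument gives up is the exact constant $\frac{\alpha+1}{\alpha}$, which is immaterial here; your fallback via co-coercivity recovers it if desired.
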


\begin{proof}[{\bf Proof of Lemma \ref{lem2}}]
Besides strong convexity, we know that $F(\theta)$ is $\beta$-smooth with $\beta = 1$. That is 
$$
F(\theta_1) - F(\theta_2) - F'(\theta_2) (\theta_1 - \theta_2) \le \frac{\beta}{2} (\theta_1 - \theta_2)^2.
$$
$\beta = 1$ is obvious given the second derivative of $F(\theta)$ is bounded by 1. From Lemma 3.11 of \cite{bubeck2014convex}, for $\theta_1,\theta_2 \in \Theta$,
$$
\frac{\alpha}{\alpha+1} (\theta_1 - \theta_2)^2 \le (F'(\theta_1) - F'(\theta_2))(\theta_1 - \theta_2) \le \frac{\alpha}{2(\alpha+1)} (\theta_1 - \theta_2)^2 + \frac{\alpha+1}{2\alpha} (F'(\theta_1) - F'(\theta_2))^2.
$$
Choose $\theta_1 = \tilde\theta^*, \theta_2 = \theta^*$, so $F'(\theta_2) = 0 = \tilde F'(\theta_1)$. Then
$$
\frac{\alpha}{\alpha+1} (\tilde\theta^* - \theta^*)^2 \le  \frac{\alpha+1}{\alpha} (F'(\tilde\theta^*) - \tilde F'(\tilde\theta^*))^2,
$$
which concludes the proof.
\end{proof}

\begin{proof}[{\bf Proof of Theorem \ref{thm5}}]
Let $\hat\sigma_t = (\hat\sigma_H)_t$ and define $Y_t = \frac{q_t}{\sigma_t^2} (\hat\sigma_t^2 - \EE[\hat\sigma_t^2])$. In order to apply Lemma \ref{lem1}, we need to employ Lemma \ref{lem2} to bound the perturbation of Huber loss minimizer via bounding the perturbation of Huber loss derivative. Similar to the proof of Theorem \ref{thm4}, we can show that $|\hat\sigma_t^2 - \sigma_t^2| \le C\sqrt{z}$ for all $t$ with probability larger than $1-e^{-z}$. 
We can actually explicitly write out the bound for $|\hat\sigma_t^2 - \sigma_t^2|$ following the proof of Theorem \ref{thm4}: $|\hat\sigma_t^2 - \sigma_t^2| \le 2(\sqrt{\kappa_t^{\dagger} z/T} + c\sqrt{\kappa_t^{\dagger} z}) < (2c+2)\sqrt{\kappa_t^{\dagger} z}$, which means the Huber loss minimizer does fall into the region we have strong convexity by our assumptions in Theorem \ref{thm5}.
The bound on $|\hat\sigma_t^2 - \sigma_t^2|$ also implies that $|Y_t|\le CQ\sqrt{z}$ with exception probability of $e^{-z}$. 

In addition, we would like to check $\|Y_{\cdot}\|_2 < \infty$. For $j > m$, $\gamma_j = 0$ and for $j \le m$,
\begin{equation*}
\begin{aligned}
\gamma_j^2 & \le \frac{(\alpha+1)^2}{\alpha^2} \max_t \max_{\sigma_t^2} \EE\bigg\{\bigg[(w_{j,0} |X_{t+j}^2-\sigma_t^2|) \wedge c_t - (w_{j,0} |{{X_{t+j}}'}^2-\sigma_t^2|) \wedge c_t\bigg]^2 \bigg\}  \frac{q_t^2}{\sigma_t^4} \\
& \le \frac{(\alpha+1)^2}{\alpha^2}  \max_t \max_{\sigma_t^2} \EE\bigg\{(w_{j,0} |X_{t+j}^2-\sigma_t^2| - w_{j,0} |{{X_{t+j}}'}^2-\sigma_t^2|)^2  \\
& \quad\quad + (w_{j,0} |X_{t+j}^2-\sigma_t^2| - c_t)^2 I(w_{j,0} |X_{t+j}^2-\sigma_t^2| < c_t, w_{j,0} |{{X_{t+j}}'}^2-\sigma_t^2| > c_t) \\
& \quad\quad + (w_{j,0} |{{X_{t+j}}'}^2-\sigma_t^2|  - c_t)^2 I(w_{j,0} |X_{t+j}^2-\sigma_t^2| > c_t, w_{j,0} |{{X_{t+j}}'}^2-\sigma_t^2| < c_t) \bigg\} \frac{Q^2}{\sigma_t^4}\\
& \le 4 w_{j,0}^2 Q^2  \max_t \kappa_{t+j} / \sigma_t^4 \le 4 w_{j,0}^2 Q^2  \max_{t, u\le m} \kappa_{t+u} / \sigma_t^4 \,.
\end{aligned}
\end{equation*}
Therefore $\|Y_{\cdot}\|_2 < \infty$ for any fixed $\rho \in (0,1)$. We can indeed apply Lemma \ref{lem1} to bound the sum of $Y_t$, which is of order $CQ\sqrt{zT}$, with exceptional probability of another $e^{-z}$.

Finally, we bound the bias term $\EE[\hat\sigma_t^2] / \sigma_t^2 - 1$. Note that
\begin{equation*}
\begin{aligned}
0 & = \sum_{s=t}^{t+m} w_{s,t} \EE\bigg[\min\bigg(|X_s^2 - \hat\sigma_t^2|, \frac{c_t}{w_{s,t}}\bigg) \sgn(X_s^2 - \hat\sigma_t^2)\bigg] \\
& =  \sum_{s=t}^{t+m} w_{s,t} \EE\bigg[(X_s^2 - \hat\sigma_t^2) I\bigg(|X_s^2 - \hat\sigma_t^2| \le \frac{c_t}{w_{s,t}}\bigg) + \frac{c_t}{w_{s,t}}  I\bigg(|X_s^2 - \hat\sigma_t^2| > \frac{c_t}{w_{s,t}}\bigg) \bigg] \\
& =\delta_{0,t} + \EE[(X_t^2 - \hat\sigma_t^2)] + \sum_{s=t}^{t+m} w_{s,t} \EE\bigg[(X_s^2 - \hat\sigma_t^2) I\bigg(|X_s^2 - \hat\sigma_t^2| > \frac{c_t}{w_{s,t}}\bigg) + \frac{c_t}{w_{s,t}}  I\bigg(|X_s^2 - \hat\sigma_t^2| > \frac{c_t}{w_{s,t}}\bigg) \bigg] 
\end{aligned}
\end{equation*}
Similar to the proof of Theorem \ref{thm3}, we know that the $s$-th component of the third term can be bounded by $2 w_{s,t} \EE[(X_s^2 - \hat\sigma_t^2)^2] / c_t \le 2 w_{s,t} (2\kappa_s + 2(\sigma_s^2 - \sigma_t^2)^2+ 2 \EE[(\sigma_t^2 - \hat\sigma_t^2)^2]) / c_t$. Therefore
\begin{equation*}
\begin{aligned}
\frac{\EE[\hat\sigma_t^2]}{\sigma_t^2} - 1 & \le \frac{4}{\sigma_t^2} \sum_{s=t}^{t+m} w_{s,t}^2 (\kappa_s + \EE[(\sigma_t^2 - \hat\sigma_t^2)^2])  / c_t + \frac{\delta_{0,t} + 4\delta_{1,t}}{\sigma_t^2} \\
& = 4 \sqrt{\frac{{\kappa^{\dagger}_t} z}{\sigma_t^4 T}} + \frac{\delta_{0,t} + 4\delta_{1,t}}{\sigma_t^2} + \frac{4 E[(\sigma_t^2 - \hat\sigma_t^2)^2]}{ \sigma_t^4} \sqrt{\frac{\sigma_t^4 z}{{\kappa^{\dagger}_t} T}}.
\end{aligned}
\end{equation*}
Furthermore, it is not hard to show that $E[(\sigma_t^2 - \hat\sigma_t^2)^2]$ is bounded using Lemma \ref{lem2}. Therefore, the bias is indeed of the order $\sqrt{z/T}$ with the additional approximation error rate $\frac{\delta_{0,t} + 4\delta_{1,t}}{\sigma_t^2} $. The proof is now complete.
\end{proof}

\begin{proof}[{\bf Proof of Theorem \ref{thm6}}]
Following the same proof as Theorem \ref{thm1}, we have that
$$
\PP(|h_t- \sigma_t^2| \le 4 \sqrt{\kappa ^ {\ddagger}_t z/n^\ddagger_{\eff}}) \ge 1-2e^{-z+ |\Delta_{0,t}| \sqrt{{n^\ddagger_{\eff} z}/{\kappa ^ {\ddagger}_t}} + 2\Delta_{1,t}{n^\ddagger_{\eff} z}/{\kappa ^ {\ddagger}_t} }
$$
Conditioning on this event, $L$ is Lipchitz in the second argument. Then the error bound is enlarged by $B$ times. 
\end{proof}

\begin{proof}[{\bf Proof of Theorem \ref{thm7}}]
Recall that
\begin{equation*}
\begin{aligned}
& \text{\rnum{2}} \le \PP\bigg(\bigg|\frac{1}{T} \sum_{t=1}^T (L(\sigma_t^2, h_t) - \EE[L(\sigma_t^2, h_t)]) \bigg| > \frac{\varepsilon}{2}\bigg) + \PP\bigg(\bigg| \frac{1}{T} \sum_{t=1}^T (C(h_{t}) - \EE[C(h_{t})])(\hat\sigma_t^2 - \sigma_t^2) \bigg| > \frac{\varepsilon}{2} \bigg) \\
& =: \Delta_A + \Delta_B\,.
\end{aligned}
\end{equation*}

\textit{\underline{Now let us prove (i) first. }}
We first bound $\Delta_A$. We still apply Lemma \ref{lem1} to bound the concentration. Let $Y_t = L(\sigma_t^2, h_t) - \EE[L(\sigma_t^2, h_t)]$. From the proof of Theorem \ref{thm6}, we know that 
$$
\PP(|h_t- \sigma_t^2| \le 4 \sqrt{\kappa ^ {\ddagger}_t z/n^\ddagger_{\eff}}) \ge 1-2e^{-z/2}\,.
$$
So $|h_t- \sigma_t^2| \le 4 \sqrt{\kappa ^ {\ddagger}_t z/n^\ddagger_{\eff}} \le \sigma_t^2/2$ with exception probability of $2e^{-z/2}$. Applying the union bound, we get $|h_t- \sigma_t^2| \le \sigma_t^2/2$ for all $t$ with probability $\ge 1-2Te^{-z/2}$. On this event, we have $|Y_t| \le C$ because $|L(\sigma_t^2, h_t) - L(\sigma_t^2, \sigma_t^2)| \le B(\sigma_t^2/2) |h_t - \sigma_t^2| \le B(\sigma_t^2/2) \sigma_t^2/2 < \infty$. Similar to previous proofs, except that now we look data backward, for $j \ge 0$,
\begin{equation*}
\begin{aligned}
\gamma_j^2 & = \max_t \EE\bigg\{ [L(\sigma_t^2, h_t(X_{t-1}, \dots, X_{t-m})) -  L(\sigma_t^2, h_t(X_{t-1}, \dots, {X_{t-j-1}}', \dots, X_{t-m}))]^2 \bigg\} \\
& \le \max_t B^2(\sigma_t^2/2) \max_t \EE\bigg\{ [h_t(X_{t-1}, \dots, X_{t-m}) -  h_t(X_{t-1}, \dots, {X_{t-j-1}}', \dots, X_{t-m})]^2 \bigg\}\\
& \le 4 \nu_{-j,1}^2  \max_t B^2(\sigma_t^2/2) \frac{(\alpha+1)^2}{\alpha^2} \max_t \kappa_{t}
\end{aligned}
\end{equation*}
The last inequality can be shown similar to the proof of Theorem \ref{thm5} with the assumption that the Huber loss is $\alpha$-strongly convex locally. Therefore, $\|Y_{\cdot}\|_2 < \infty$ for any fixed $\rho \in (0,1)$. We apply Lemma \ref{lem1} on $Y_t$ and again pick $y = C\sqrt{zT}$ to make the exceptional probability $2e^{-z}$. So we get $\PP(| \Delta_A | \le C\sqrt{z/T}) \ge 1 - 2(T+1) e^{-z}$.

Now let us try to apply Lemma \ref{lem1} on $\Delta_B$. Let $Y_t = (C(h_{t}) - \EE[C(h_{t})])(\hat\sigma_t^2 - \sigma_t^2)$. Note that since $C(\cdot)$ is non-increasing, so $C(h_t) \le C(\sigma_t^2 / 2)$ is bounded. 
If we use the second and third proxies, in the proofs of Theorem \ref{thm4} and \ref{thm5}, we have showed that $|\hat\sigma_t^2 - \sigma_t^2| \le C\sqrt{z}$ for all $t$ with exception probability at most $e^{-z}$. Therefore, we conclude $|Y_t| \le C\sqrt{z}$ for all $t$ with exception probability at most $e^{-z}$.  
Now for bounding $\gamma_j$, note that $Y_t$ actually is a function of $X_{t-m}, \dots, X_{t-1}, X_t, \dots, X_{t+m}$ with the first $m$ data constructing predictors and the remaining $m+1$ data constructing proxies. Hence, for $j < m$, it is not hard to show $\gamma_j^2 \le C \nu_{-(m-j),1}^2$ for some $C >0$ and for $m \le j \le 2m$, $\gamma_j^2 \le C w_{j-m, 0}^2$. So we have $\|Y_{\cdot}\|_2 \le 2\sqrt{C} < \infty$. Applying Lemma \ref{lem1} will again give us  $\PP(| \Delta_B | \le C\sqrt{z/T}) \ge 1 - 3 e^{-z}$.

Combining results for $\Delta_A$ and $\Delta_B$ and choose $\varepsilon = C\sqrt{z/T}$ for large enough $C$, we conclude (i) for bounding \rnum{2}.

\medskip
\textit{\underline{Next we prove (ii) and (iii).}} The proof follows the exact same arguments as (i) except for a few bounding details. 

Firstly, in bounding $\Delta_A$, we need $L(\sigma_t^2, h_t)$ to be bounded. In (iii), $h_t$ is not necessarily bounded, but we directly work with a bounded loss $L(\sigma_t^2, h_t) \le M_0$. In (ii), $h_t \le M$ and we claim $h_t \ge \sigma_t^2/2$ with probability $\ge 1-2e^{-z}$, thus $L(\sigma_t^2, h_t) \le B_t(M) M$. To see why the claim holds, define $\check h_t = \sum_{s=t-m}^{t-1} \nu_{s,t} \min(X_s^2, \tau_t / \nu_{s,t})$. The robust predictor proposed in Section \ref{sec4.1} achieves central fourth moment, while $\check h_t$ can achieve the same rate of convergence with absolute fourth moment. This is similar to the difference between the second and third proxy option. Similar to the proof of Theorem \ref{thm6}, we can show 
$$
\PP\bigg(|\check h_t- \sigma_t^2| \le 4 \sqrt{{\tilde\kappa^{\ddagger}_t} z/n^\ddagger_{\eff}} \le \sigma_t^2/2\bigg) \ge 1-2e^{-z/2}\,.
$$
So we know that $\check h_t \ge \sigma_t^2/2$ with probability $\ge 1-2e^{-z}$. Interestingly, $h_t \ge \check h_t$. So we always have good control of the left tail due to the positiveness of squared data. 

Secondly, in bounding $\Delta_A$, we need $\gamma_j^2 \le C \nu_{-j,1}^2$. In (iii), since loss is Lipchitz in the whole region, we can easily see that $\gamma_j^2 \le 4 \nu_{-j,1}^2 B_0^2 \max_t \tilde\kappa_t$. In (ii), loss is Lipchitz in the local region of $\sigma_t^2/2 \le h_t \le M$, but we know with high probability the clipped predictor indeed falls into the region, so we have $\gamma_j^2 \le 4 \nu_{-j,1}^2 \max_t B_t^2(M) \max_t \tilde\kappa_t$. Therefore, we have no problem bounding $\Delta_A$.

Thirdly, in bounding $\Delta_B$, we require $C(h_t)$ to be bounded. Note that since $h_t \ge \sigma_t^2$ with high probability, we have $C(h_t) \le C(\sigma_t^2 / 2)$ even when we use non-robust predictors in (ii) and (iii). So we can bound $\Delta_B$ as desired too. 

Finally putting everything together and choose $\varepsilon = C\sqrt{z/T}$ for large enough $C$, we conclude (ii) and (iii) for bounding \rnum{2}.

\end{proof}

\end{document}